\numberwithin{equation}{section}
\theoremstyle{plain}
\newtheorem{Thm}{Theorem}[section]
\newtheorem{Lem}[Thm]{Lemma}
\newtheorem{Cor}[Thm]{Corollary}
\newtheorem{Prop}[Thm]{Proposition}
\theoremstyle{definition}
\newtheorem{Def}[Thm]{Definition}
\newtheorem{Rem}[Thm]{Remark}
\newtheorem{Claim}[Thm]{Claim}
\newcommand\bfC{\mathbf{C}}
\title{An alternative approach for the mean-field behaviour of weakly self-avoiding walks in dimensions $d>4$}
\begin{document}

\author{Hugo Duminil-Copin\footnote{Institut des Hautes \'Etudes Scientifiques, \url{duminil@ihes.fr}}\ \footnotemark[2]\footnote{Université de Genève, \url{hugo.duminil@unige.ch}, \url{romain.panis@unige.ch}} , Romain Panis\footnotemark[2]}
\maketitle

{\em We dedicate this article to Geoffrey Grimmett on the occasion of his seventieth birthday.}

\begin{abstract}
This article proposes a new way of deriving mean-field exponents for the weakly self-avoiding walk model in dimensions $d>4$. Among other results, we obtain up-to-constant estimates for the full-space and half-space two-point functions in the critical and near-critical regimes. A companion paper proposes a similar analysis for spread-out Bernoulli percolation in dimensions $d>6$ \cite{DumPan24Perco}.
\end{abstract}

\section{Introduction}
One of the main challenges of statistical mechanics consists in understanding the (near-)critical behaviour of diverse lattice models. Among other things, one may for instance compute the models' \emph{critical exponents}. Conducting this task is in general extremely difficult as it involves in a subtle way both the special features of the models and the geometry of the graphs on which they are defined.

 A striking observation was made in the case of models defined on the hypercubic lattice $\mathbb Z^d$: above a so-called \emph{upper-critical dimension} $d_c$, the geometry ceases to play a role and the critical exponents take an easier form, equalling those obtained on a Cayley tree (or \emph{Bethe lattice}) or the complete graph. The regime $d>d_c$ is called the \emph{mean-field} regime of a model. Noteworthy methods such as the \emph{lace expansion} \cite{BrydgesSpencerSAW} 
 or the \emph{rigorous renormalisation group method} \cite{BauerschmidtBrydgesSlade2014Phi4fourdim,BauerschmidtBrydgesSlade2015WSAW4D,BauerschmidtBrydgesSlade2015WSAW4DLogCorrections,BauerschmidtBrydgesSladeBOOKRG2019}
 have emerged to carry out the analysis of the mean-field regime. However, a limitation
of these methods lies in their predominantly \emph{perturbative} nature, which is reflected in the necessity of exhibiting a small parameter in the model. The Weakly Self-Avoiding Walk (WSAW) model includes such a small parameter in its definition, making it a natural testing ground for developing the analysis of the mean-field regime.

Lace expansion was successfully applied to derive the mean-field behaviour of the WSAW model in dimensions $d>4$: in \cite{BrydgesSpencerSAW}, Gaussian limit laws were exhibited for the displacement of the $n$-steps WSAW, while in \cite{HaraDecayOfCorrelationsInVariousModels2008,BolthausenvanderHofstadKozmaDummies2018,BrydgesHelmuthHolmesContinuousLaceExpansion,Slade2022NewProofCVLACEExpansion,Slade2023near}, estimates of the two-point function were obtained.

The WSAW model is also a toy model to study (\emph{strictly}) self-avoiding walks. Using the lace expansion, Slade \cite{Slade1987Diffusion} extended the results of \cite{BrydgesSpencerSAW} to the setup of the self-avoiding walk model in sufficiently large dimensions. This restriction was later removed by Hara and Slade \cite{HaraSlade1992SAW} who optimised the techniques to obtain mean-field behaviour of the SAW model in dimensions $d>4$. Let us mention that the lace expansion was also applied to a variety of models of statistical mechanics including Bernoulli percolation \cite{HaraSlade1990Perco,HaraSladevdHofstad2003PercoSO,HaraDecayOfCorrelationsInVariousModels2008,FitznervdHofstad2017Perco-d>10}, lattice trees and animals \cite{HaraSlade1990LatticeTrees}, the Ising model \cite{Sakai2007LaceExpIsing,Sakai2022correctboundsIsing}, and the $\varphi^4$ model \cite{Sakai2015Phi4,BrydgesHelmuthHolmesContinuousLaceExpansion}. For more information on the lace expansion approach, we refer to the monograph \cite{SladeSaintFlourLaceExpansion2006}.

In this paper, we provide an alternative argument to obtain mean-field bounds on the two-point function of the weakly self-avoiding walk in dimensions $d>4$. This technique extends to a number of other models after suitable modifications (see e.g.~\cite{DumPan24Perco} for the example of percolation), but we choose to stick to the case of the WSAW model to present the method in its simplest context.

\paragraph{Notations.} Consider the hypercubic lattice $\mathbb Z^d$ and let $y\sim z$ denote the fact that $y$ and $z$ are neighbours in $\mathbb Z^d$.
Set  ${\bf e}_j$ to be the unit vector with $j$-th coordinate equal to 1. Write $x_j$ for the $j$-th coordinate of $x$, and denote its $\ell^\infty$ norm by $|x|:=\max\{|x_j|:1\le j\le d\}$. Set $\Lambda_n:=\{x\in \mathbb Z^d:|x|\le n\}$ and for $x\in \mathbb Z^d$, $\Lambda_n(x):=\Lambda_n+x$. Also, set $\mathbb H_n:=-n{\bf e}_1+\mathbb H$, where $\mathbb H:=\mathbb Z_+\times\mathbb Z^{d-1}=\{0,1,\ldots\}\times \mathbb Z^{d-1}$. Finally, let $\partial S$ be the boundary of the set $S$, given by the vertices in $S$ with at least one neighbour outside $S$.

\subsection{Definitions and statements of the results}
 Let $\lambda\in [0,1]$. In what follows, we drop it from the notation.  Let $\mathcal{W}$ be the set of finite paths in $\mathbb Z^d$. Let $|\gamma|$ be the number of edges of $\gamma\in\mathcal{W}$. For $\gamma=(\gamma(0),\ldots,\gamma(|\gamma|))\in \mathcal{W}$, introduce the weight\begin{equation}
    \rho(\gamma):=\prod_{0\leq s<t\leq |\gamma|}\big(1-\lambda\mathds{1}_{\gamma(s)=\gamma(t)}\big).
\end{equation}
For a set $\Lambda\subset\mathbb Z^d$, $\beta\geq0$, and $x,y\in\Lambda$, define the {\em two-point function} (in $\Lambda$) by
\begin{equation}
    G_\beta^\Lambda(x,y):=\sum_{\gamma: x\rightarrow y\subset\Lambda}\beta^{|\gamma|}\rho(\gamma),
\end{equation}
where $x\rightarrow y$ means that $\gamma$ starts at $x$ and ends at $y$ (in particular, if $x=y$, we count the walk of length zero). When $\Lambda=\mathbb Z^d$, we drop it from the notation and simply write $G_\beta(x,y)$.

Let $\beta_c$ be the critical value for the finiteness of the {\em susceptibility} 
\begin{equation}\label{eq: def susceptibility}
\chi(\beta):=\sum_{x\in \mathbb Z^d}G_\beta(0,x),
\end{equation} defined by the formula
\begin{equation}
    \beta_c=\beta_c(\lambda):=\sup\Big\lbrace \beta\geq 0: \: \chi(\beta)<\infty \Big\rbrace.
\end{equation}
It is easy to obtain (see \cite{SladeSaintFlourLaceExpansion2006,BauerschmidtDCGoodmanSlade}) that $\beta_c\in[(2d)^{-1},\mu_c(d)^{-1}]$, where $\mu_c(d)$ is the connective constant of $\mathbb Z^d$. For $\beta<\beta_c$, the two-point function decays exponentially fast in the distance. A convenient quantity helping to monitor the rate of decay is the \emph{sharp length} $L_\beta$ defined below (see also \cite{DuminilTassionNewProofSharpness2016,PanisTriviality2023,DuminilPanis2024newLB} for a study of this quantity in the context of Bernoulli percolation and the Ising model). For $\beta\geq 0$ and $S\subset \mathbb Z^d$, let
\begin{equation}
	\varphi_\beta(S):=\sum_{\substack{y\in S\\ z\notin S\\ y\sim z}}G^S_\beta(0,y)\beta.
\end{equation}
The sharp length $L_\beta$ is defined by
\begin{equation}\label{eq: def L beta}
    L_\beta:=\inf\{k\ge1:\varphi_\beta(\Lambda_k)\le1/e^2\}.
\end{equation}
Exponential decay of the two-point function guarantees that  $L_\beta$ is finite for $\beta<\beta_c$, and it is easy to prove that it is infinite\footnote{The existence of $k$ such that $\varphi_{\beta_c}(\Lambda_k)<1$ would imply that $\chi(\beta_c)\leq \tfrac{|\Lambda_k|}{1-\varphi_{\beta_c}(\Lambda_k)}$ by Lemma \ref{Lem: Simon-Lieb WSAW} and the strategy of \cite{DuminilTassionNewProofSharpness2016}. In particular, this yields $\chi(\beta_c)<\infty$ which is impossible, see e.g \cite[(4.6)]{BauerschmidtDCGoodmanSlade}.} for $\beta=\beta_c$ (see \cite{SimonInequalityIsing1980,DuminilTassionNewProofSharpness2016}).

We now state our first main result, which provides uniform upper bounds on the full-space and half-space two-point functions.

\begin{Thm}[Upper bounds]\label{thm:mainwsaw} Let $d>4$.
There exist $C,\lambda_0>0$ such that for every $\lambda<\lambda_0$, every $\beta\le\beta_c$, and every $x\in \mathbb Z^d$,
\begin{align}
\label{eq:bound full plane}
G_\beta(0,x)&\le \frac{C}{(1\vee |x|)^{d-2}}\exp(-|x|/L_\beta),\\
\label{eq:bound half plane} G_\beta^\mathbb H(0,x)&\le \frac{C}{(1\vee|x_1|)^{d-1}}\exp(-|x_1|/L_\beta).
\end{align}
\end{Thm}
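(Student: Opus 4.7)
The strategy is a bootstrap argument exploiting the Simon--Lieb-type inequality for the WSAW two-point function (referenced in the footnote as Lemma~\ref{Lem: Simon-Lieb WSAW}) combined with iteration at the scale $L_\beta$. For the full-space bound, introduce the test quantity
\begin{equation*}
f(\beta) := \sup_{x\in \mathbb{Z}^d} (1\vee|x|)^{d-2}\exp(|x|/L_\beta)\, G_\beta(0,x),
\end{equation*}
and aim to establish $f(\beta) \le C$ uniformly in $\beta\le \beta_c$, for $\lambda<\lambda_0$ small enough.

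The Simon--Lieb inequality asserts that for $0\in S\subset \mathbb{Z}^d$ finite and $x\notin S$,
\begin{equation*}
G_\beta(0,x) \;\le\; \sum_{\substack{y\in S,\ z\notin S\\ y\sim z}} G_\beta^S(0,y)\,\beta\, G_\beta(z,x).
\end{equation*}
Taking $S=\Lambda_{L_\beta}$ and iterating across roughly $|x|/L_\beta$ concentric boxes extracts the exponential factor $\exp(-|x|/L_\beta)$, since each boundary crossing costs a factor $\varphi_\beta(\Lambda_{L_\beta})\le e^{-2}$ by \eqref{eq: def L beta}. To produce the polynomial prefactor $|x|^{-(d-2)}$, I would apply Simon--Lieb instead with $S=\Lambda_{|x|/2}$, so that $G_\beta(0,x)$ is dominated by a convolution controlled by the bubble diagram $B_\beta(x):=\sum_{y}G_\beta(0,y)G_\beta(y,x)$. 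Under the bootstrap assumption $f(\beta)\le C_0$, one obtains $B_\beta(x) \le C\,C_0^{2}\,(1\vee|x|)^{-(d-4)}$, finiteness of the underlying convolution being precisely the place where $d>4$ enters. Feeding this estimate back into Simon--Lieb yields a self-improving inequality of the schematic form $f(\beta)\le A + \lambda\, B\, f(\beta)^{2}$; the factor $\lambda$ appears because at $\lambda=0$ the two-point function degenerates to the simple random walk Green's function, for which the bound is classical, and the WSAW correction is $O(\lambda)$ times a bubble. Continuity of $f$ in $\beta$ together with a trivial initial estimate at small $\beta$ then closes the bootstrap.

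The half-space bound \eqref{eq:bound half plane} follows by a parallel argument inside $\mathbb{H}$: I would iterate the half-space version of Simon--Lieb across slabs of width $L_\beta$ perpendicular to $\mathbf{e}_1$, using the full-space bound \eqref{eq:bound full plane} already established to control the convolutions within each slab, and propagate a test function $f^{\mathbb{H}}(\beta)$ carrying the weight $(1\vee|x_1|)^{d-1}\exp(|x_1|/L_\beta)$. The additional power $|x_1|^{-1}$ compared to the full-space case reflects the survival probability of a one-dimensional random walk forced to stay in a half-line and is extracted via an inductive ``ladder'' identity on the first coordinate of~$x$, schematically $G^{\mathbb{H}}_\beta(0,x) \lesssim L_\beta^{-1}\sum_{y:\,y_1=L_\beta}G^{\mathbb{H}}_\beta(0,y)\,G^{\mathbb{H}}_\beta(y,x)$, plus boundary error terms controlled by the full-space estimate.

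The main obstacle is the circularity inherent to this bootstrap: smallness of the bubble requires the Gaussian upper bound, whose proof in turn invokes bubble smallness. The small parameter $\lambda$ is what breaks the circle by producing a genuine contraction in $f$, with the correct initial condition furnished by the regime $\beta \ll (2d)^{-1}$ where $G_\beta$ is dominated by the random walk generating function, and continuity in $\beta$ then propagating the bound up to $\beta_c$. A secondary technical point is that the iteration of Simon--Lieb at scale $L_\beta$ (which gives exponential decay) must interact cleanly with the polynomial bootstrap (which gives the Gaussian prefactor), and this requires carefully tracking error terms at the boundary of each box so that the two mechanisms combine multiplicatively rather than additively.
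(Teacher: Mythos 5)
Your overall architecture---a bootstrap driven by Simon--Lieb, with iteration at scale $L_\beta$ to extract the exponential factor---is in the spirit of the paper, and the iteration step (taking $S$ to be translates of $\Lambda_{L_\beta}$ and using $\varphi_\beta(\Lambda_{L_\beta})\le e^{-2}$) matches what the paper does in Section~\ref{sec:2.3}. But the heart of the argument, namely how to get the critical polynomial prefactor, has a genuine gap, and your proposed mechanism would not close.

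Concretely: you propose to take $S=\Lambda_{|x|/2}$ in Simon--Lieb and control the resulting boundary convolution via the bubble. If you feed in the bootstrap hypothesis $G_\beta(0,y)\le f(\beta)(1\vee|y|)^{-(d-2)}$, then the boundary of $\Lambda_{|x|/2}$ has $\asymp|x|^{d-1}$ vertices, each contributing $\asymp|x|^{-(d-2)}\cdot|x|^{-(d-2)}$, so the Simon--Lieb sum is of order $|x|^{3-d}=|x|^{-(d-3)}$. This loses a full power of $|x|$ relative to the target $|x|^{-(d-2)}$: the box decomposition fails to close, regardless of how small $\lambda$ is. The paper's fix, and the key structural idea you are missing, is to take $S=\mathbb H_n$ (half-spaces) instead of boxes: then the boundary sum $\sum_{y\in\partial\mathbb H_n}G_\beta^{\mathbb H_n}(0,y)\beta=\varphi_\beta(\mathbb H_n)$ is bootstrapped to be $\approx 1$ rather than polynomially growing, the remaining factor is the \emph{half-space} two-point function which decays like $(n+|x|)^{-(d-1)}$, and the sum over $n$ produces precisely $|x|^{-(d-2)}$. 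This is why the paper bootstraps on \emph{half-space} quantities $(\ell^1_\beta)$ and $(\ell^\infty_\beta)$ and deduces the full-space bound from them (Lemma~\ref{lem: full plane 2pt function out of halfplane one}), which is the reverse of your proposed order: your plan to establish the full-space bound first and then derive the half-space one would leave the critical full-space bound unproved.

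The second missing ingredient is how to promote the $\ell^1$ information ($\varphi_\beta(\mathbb H_n)\le 1+K\lambda$, produced by the reversed Simon--Lieb inequality and the error-amplitude bound) into the pointwise $\ell^\infty$ estimate $G_\beta^{\mathbb H_n}(0,x)\le C/n^{d-1}$. Your sketch jumps from a self-improving inequality $f\le A+\lambda B f^2$ directly to a pointwise bound, but the Simon--Lieb machinery naturally delivers averaged (boundary-sum) control, not pointwise control, and these differ by a volume factor. The paper resolves this via the regularity estimate of Proposition~\ref{prop:regularity}: one shows $G_\beta^\Lambda(u,x)$ varies little when $u$ moves within a mesoscopic box, by pairing reflected half-space Simon--Lieb decompositions and using a decaying escape estimate (Lemma~\ref{lem:estimate half-space}). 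This Harnack-type control is what legitimately converts the $\ell^1$ bound into $\ell^\infty$, via the averaging step in Proposition~\ref{prop: improve l inf bound}. Without a comparable regularity mechanism, your bootstrap cannot close at the $\ell^\infty$ level, even if the bubble computation were repaired.
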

A near-critical upper bound was derived using the lace expansion in \cite{Slade2023near} (see also \cite{Liu2023Plateau} for the case of the strictly (i.e. $\lambda=1$) self-avoiding walk model). There, $L_\beta$ is replaced by $C_0(\beta_c-\beta)^{-1/2}$ for some $C_0>0$. In fact, we will prove that the two quantities are within a multiplicative constant of each other, see Corollary \ref{cor: critical exponents}.
The second main theorem of this article provides lower bounds matching the bounds in Theorem~\ref{thm:mainwsaw}.
\begin{Thm}[Lower bounds]\label{thm:main2wsaw} Let $d>4$.
There exist $c,C,\lambda_0>0$  such that the following holds. For every $\lambda<\lambda_0$, every $\tfrac{1}{2d}\le \beta\leq\beta_c$, and every $x\in \mathbb Z^d$,
\begin{align}
\label{eq:bound full plane 2}
G_\beta(0,x)&\ge \frac{c}{(1\vee|x|)^{d-2}}\exp(-C|x|/L_\beta),\\
\label{eq:bound half plane 2}G_\beta^\mathbb H(0,x)&\ge \frac{c}{(1\vee|x|)^{d-1}}\exp(-C|x|/L_\beta)\qquad\text{ provided that }x_1=|x|.
\end{align}
\end{Thm}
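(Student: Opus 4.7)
The plan is to match the simple random walk Green's function asymptotics ---
\(G_{\mathrm{SRW}}(0,x)\asymp|x|^{2-d}\) in full space and
\(G_{\mathrm{SRW}}^{\mathbb H}(0,x)\asymp x_1|x|^{-d}\) in the half-space ---
which give the announced polynomial prefactors, and to produce the
exponential factor in \(L_\beta\) by a chaining argument. The central
combinatorial tool is the decoupling identity, valid for any
\(S\subset\mathbb Z^d\) with \(0\in S\) and \(x\notin S\):
\begin{equation}\label{eq:dec-wsaw-lower}
G_\beta(0,x)\;\ge\;\sum_{\substack{y\in S,\,y'\notin S\\ y\sim y'}}
  G^S_\beta(0,y)\,\beta\,G^{S^c}_\beta(y',x),
\end{equation}
obtained by restricting the sum defining \(G_\beta(0,x)\) to walks of the
form \(\gamma_1\cdot(y,y')\cdot\gamma_2\) with \(\gamma_1\subset S\) and
\(\gamma_2\subset S^c\); since the two pieces visit disjoint vertex sets,
the weight \(\rho\) factorises exactly as \(\rho(\gamma_1)\rho(\gamma_2)\).

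For the full-space polynomial regime \(|x|\le L_\beta\), apply
\eqref{eq:dec-wsaw-lower} with \(S=\Lambda_{\lfloor|x|/2\rfloor}\): the
``inside'' factor \(G^S_\beta(0,y)\) has half-space scaling
\(\sim|x|^{-(d-1)}\) for \(y\in\partial S\), while the ``outside'' factor
\(G^{S^c}_\beta(y',x)\) has full-space scaling \(\sim|x|^{-(d-2)}\);
combined with the \(\sim|x|^{d-1}\) boundary points in the sum, the total
yields the target \(|x|^{-(d-2)}\). The half-space lower bound needed for
the first factor is established jointly with \eqref{eq:bound full plane 2}
via an inductive bootstrap on scale, closed using
Theorem~\ref{thm:mainwsaw}: the finiteness of the bubble
\(\sum_y G_\beta(0,y)^2\) --- which holds precisely when \(d>4\) ---
ensures that error terms accumulated across scales remain summable. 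The
exponential regime \(|x|>L_\beta\) is handled by iterating
\eqref{eq:dec-wsaw-lower} along a sequence of intermediate points spaced
by \(L_\beta\); each link contributes a constant lower bound, and after
\(O(|x|/L_\beta)\) iterations one obtains \(\exp(-C|x|/L_\beta)\). The
half-space bound \eqref{eq:bound half plane 2} under \(x_1=|x|\) follows
the same scheme entirely inside \(\mathbb H\), with
\(S=\mathbb H\cap\Lambda_{|x|/2}\); the extra factor \(|x|^{-1}\)
relative to the full-space case is the discrete gambler's-ruin / ballot
factor produced by the half-space conditioning at the first step.

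The main obstacle is the polynomial lower bound itself. A naive
perturbative estimate via \(\rho(\gamma)\ge 1-\lambda I(\gamma)\), with
\(I(\gamma)\) the number of self-intersections, produces a correction of
order \(\lambda(G_\beta\ast G_\beta)(0,x)\asymp\lambda|x|^{4-d}\), which
\emph{dominates} the target \(|x|^{2-d}\) as soon as
\(|x|\gtrsim\lambda^{-1/2}\) --- and in the critical regime the bound is
needed at arbitrarily large scales. The decoupling identity must therefore
be combined with the full strength of Theorem~\ref{thm:mainwsaw} through
a bootstrap that uses \(d>4\) critically to control the cross-interactions
between the two sub-walks produced by \eqref{eq:dec-wsaw-lower}, and
requires \(\lambda<\lambda_0\) sufficiently small to close.
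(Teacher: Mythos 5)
Your decoupling identity \eqref{eq:dec-wsaw-lower} is correct as stated (the two sub-walks have disjoint vertex sets, so $\rho$ factorises), but your polynomial accounting from it is off by a factor of $|x|$, and this is not a technicality. If $S=\Lambda_{\lfloor|x|/2\rfloor}$ then $y'$ is adjacent to $\partial S$, so the ``outside'' factor $G_\beta^{S^c}(y',x)$ is the Green's function of a walk \emph{started next to a boundary it must avoid}: the cost of escaping $\partial S$ suppresses it by an extra $|x|^{-1}$, giving $G_\beta^{S^c}(y',x)\sim|x|^{-(d-1)}$, not $|x|^{-(d-2)}$ as you claim. (For $\lambda=0$ this is exact: the first-exit identity for SRW has $G(y',x)$ in the second slot, and $G^{S^c}(y',x)/G(y',x)\asymp 1/|x|$ by the radial gambler's ruin.) Feeding the correct scaling into your accounting yields $G_\beta(0,x)\gtrsim|x|^{-(d-1)}$, one power short of the target, and the same loss reappears in each link of your chaining for $|x|>L_\beta$. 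The paper avoids this precisely by replacing the exact decoupling with the \emph{reversed Simon--Lieb inequality} \eqref{eq:reversed SL}, whose second factor is $G_\beta^\Lambda(z,x)$ in the \emph{full} domain $\Lambda$ (no avoidance constraint), at the price of an explicit error term $\lambda\sum_u E_\beta^{S,\Lambda}(u)G_\beta^\Lambda(u,x)$ which can be controlled when $d>4$ (Lemma \ref{lem:bound error} and the computation in the proof of Lemma \ref{lem:lower below full-space}). This distinction is the crux of the argument, and your plan as written cannot close it.

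Two further ingredients in the paper's proof are missing from your sketch. First, the polynomial lower bound is not obtained from a single cut at scale $|x|/2$: the paper applies the reversed Simon--Lieb inequality with nested half-spaces $S=\mathbb H_k$, $\Lambda=\mathbb H_{k+1}$, and \emph{sums over $k$}; each term contributes $\varphi_\beta(\Lambda_k)\cdot\min_z G^{\mathbb H_{k+1}}_\beta(z,x)\gtrsim(n+k)^{1-d}$, and summing $k$ up to $\sim L_\beta(\varepsilon)$ produces the factor $n^{2-d}$. Second, and most importantly, the passage from the averaged lower bound $\tfrac{1}{|A_n|}\sum_{y\in A_n}G_\beta^{\mathbb H}(0,y)\gtrsim n^{1-d}$ (which is what $\varphi_\beta(\Lambda_n)>1-\varepsilon$ actually gives you for free) to the pointwise bound $G_\beta^{\mathbb H}(0,x)\gtrsim n^{1-d}$ is done via a Harnack-type estimate (Proposition \ref{prop:regularity2}), proved by an iterated Simon--Lieb comparison with an auxiliary rescaled random walk and a coupling argument. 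Nothing in your proposal addresses this $\ell^1$-to-$\ell^\infty$ step, and without it the ``inductive bootstrap on scale'' you allude to has no base to stand on. Finally, the appeal to bubble finiteness is a red herring here: Corollary \ref{cor: finiteness bubble} is a \emph{consequence} of Theorem \ref{thm:mainwsaw}, not an input to Theorem \ref{thm:main2wsaw}; what the paper actually uses are the pointwise upper bounds \eqref{eq:upper bound below L}--\eqref{eq:upper half-spacebound below L} together with $d>4$ to make the error amplitude and its associated sums converge.
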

The bounds of Theorems \ref{thm:mainwsaw} and \ref{thm:main2wsaw} are expected to hold also at the upper-critical dimension $d=4$ (with potential logarithmic corrections in the exponential). In the case of the critical full-space estimate, this has been successfully derived in \cite{BauerschmidtBrydgesSlade2015WSAW4D}.

A direct consequence of Theorem \ref{thm:mainwsaw} is the finiteness at criticality of the so-called \emph{bubble diagram}, which plays a central role in the study of the mean-field regime of the WSAW model, see e.g.~\cite{BovierFelderFrohlich1984BubbleFiniteWSAW,SladeSaintFlourLaceExpansion2006,BauerschmidtDCGoodmanSlade}.
\begin{Cor}[Finiteness of the bubble diagram]\label{cor: finiteness bubble} Let $d>4$. There exists $\lambda_0>0$ such that for every $\lambda<\lambda_0$,
\begin{equation}
	B(\beta_c):=\sum_{x\in \mathbb Z^d}G_{\beta_c}(0,x)^2<\infty.
\end{equation}
\end{Cor}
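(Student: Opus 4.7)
The proof is an immediate consequence of Theorem~\ref{thm:mainwsaw} together with the fact, recalled in the footnote after \eqref{eq: def L beta}, that $L_{\beta_c}=\infty$. The plan is to apply \eqref{eq:bound full plane} at $\beta=\beta_c$: since $\exp(-|x|/L_{\beta_c})=1$, one obtains the pointwise bound
\begin{equation*}
G_{\beta_c}(0,x)\le \frac{C}{(1\vee|x|)^{d-2}}\qquad\text{for all }x\in\mathbb Z^d.
\end{equation*}
Squaring and summing over $\mathbb Z^d$ then yields
\begin{equation*}
B(\beta_c)\le 1+C^2\sum_{x\in\mathbb Z^d\setminus\{0\}}\frac{1}{|x|^{2(d-2)}}.
\end{equation*}

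The remaining step is to verify convergence of this sum. Grouping the $x$'s according to the shells $\Lambda_n\setminus\Lambda_{n-1}$, whose cardinalities grow like $n^{d-1}$, the tail is comparable to $\sum_{n\ge 1}n^{d-1-2(d-2)}=\sum_{n\ge 1}n^{3-d}$, which converges precisely when $d>4$. This is the only place where the upper-critical dimension $d_c=4$ enters, and it is where the hypothesis $d>4$ is used.

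There is no genuine obstacle here: the entire content lies in Theorem~\ref{thm:mainwsaw}, from which the corollary follows by a one-line comparison with a convergent series. The only (minor) care needed is to handle $x=0$ separately, since $G_{\beta_c}(0,0)\ge 1$ is not controlled by $(1\vee|x|)^{-(d-2)}$ through the constant $C$; but isolating this single term is harmless.
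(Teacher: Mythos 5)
Your proof is correct and is exactly the argument the paper has in mind: the paper states Corollary~\ref{cor: finiteness bubble} as ``a direct consequence of Theorem~\ref{thm:mainwsaw}'' without further elaboration, and you have supplied precisely the one-line comparison with the convergent series $\sum_n n^{3-d}$ that this requires. One tiny slip: the bound \eqref{eq:bound full plane} already controls the $x=0$ term (it gives $G_{\beta_c}(0,0)\le C$, since $1\vee|0|=1$), so there is no need to isolate it, and your inequality $B(\beta_c)\le 1+C^2\sum_{x\ne 0}|x|^{-2(d-2)}$ should have $C^2$ in place of $1$ for the diagonal term; neither point affects the conclusion.
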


We now describe how to recover the mean-field behaviour of the {\em susceptibility} (defined in \eqref{eq: def susceptibility}) and the {\em correlation length} $\xi_\beta$ defined for $\beta<\beta_c$ by\footnote{The limit is shown to exist by a classical subadditivity argument, see \cite[Chapter~4]{MadrasSlade2013SAW}.} 
\begin{equation}\label{eq: def corr length wsaw}
	\xi_\beta^{-1}:=\lim_{n\rightarrow \infty}-\tfrac{1}{n}\log G_\beta(0,n\mathbf{e}_1).
\end{equation}
\begin{Cor}\label{cor: critical exponents} Let $d>4$. 
There exist  $c,C,\lambda_0>0$ such that the following holds. For every $\lambda<\lambda_0$, and every $\frac{1}{2d}\le \beta<\beta_c$,
\begin{align}\label{eq: bounds susceptibility}
	c(\beta_c-\beta)^{-1}&\leq \chi(\beta)\leq C(\beta_c-\beta)^{-1},
	\\ \label{eq: bounds xi}
	c(\beta_c-\beta)^{-1/2}&\leq \xi_\beta\leq C(\beta_c-\beta)^{-1/2},
	\\ \label{eq: bounds L}
	c(\beta_c-\beta)^{-1/2}&\leq L_\beta\leq C(\beta_c-\beta)^{-1/2}.
\end{align}
\end{Cor}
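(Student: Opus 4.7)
The plan is to first relate $\chi(\beta)$ and $\xi_\beta$ to $L_\beta$ via Theorems~\ref{thm:mainwsaw} and~\ref{thm:main2wsaw}, and then relate $\chi$ to $\beta_c-\beta$ via two differential inequalities for the susceptibility. Combining these gives all three asymptotics at once.

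For the first reduction, I would sum the two-sided bounds of Theorems~\ref{thm:mainwsaw} and~\ref{thm:main2wsaw} over $x \in \mathbb Z^d$. Integration in spherical shells gives
\[
\sum_x (1 \vee |x|)^{-(d-2)} \exp(-c|x|/L_\beta) \asymp L_\beta^2,
\]
with the main contribution coming from the region $|x| \lesssim L_\beta$, and hence $\chi(\beta) \asymp L_\beta^2$. For the correlation length, I would specialise the two bounds to $x = n\mathbf{e}_1$, apply $-n^{-1}\log$, and pass to the limit $n \to \infty$: the polynomial prefactors vanish, leaving $L_\beta^{-1} \le \xi_\beta^{-1} \le CL_\beta^{-1}$, i.e., $\xi_\beta \asymp L_\beta$.

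Next, I would derive two differential inequalities for $\chi$. Differentiating $\chi(\beta) = \sum_{\gamma:0\to\cdot} \beta^{|\gamma|}\rho(\gamma)$ and decomposing each walk of positive length at a marked internal step $\gamma(s)=y$, $1 \le s \le |\gamma|$, gives
\[
\beta \chi'(\beta) = \sum_y \sum_{\substack{\gamma_1:0\to y \\ |\gamma_1|\ge 1}} \sum_{\gamma_2:y\to\cdot} \beta^{|\gamma_1|+|\gamma_2|}\, \rho(\gamma_1 \cdot \gamma_2).
\]
The trivial submultiplicativity $\rho(\gamma_1 \cdot \gamma_2) \le \rho(\gamma_1)\rho(\gamma_2)$ yields $\beta\chi'(\beta) \le \chi(\beta)^2 - \chi(\beta)$; integrating $-(1/\chi)' \ge -2d$ from $\beta$ to $\beta_c$ and using $\chi(\beta_c)=\infty$ produces the lower bound $\chi(\beta) \ge c(\beta_c-\beta)^{-1}$. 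For the matching reverse inequality, I would apply $\prod_i (1-x_i) \ge 1-\sum_i x_i$ to the cross-pairs between $\gamma_1$ and $\gamma_2$, yielding $\rho(\gamma_1 \cdot \gamma_2) \ge \rho(\gamma_1)\rho(\gamma_2)(1-\lambda V(\gamma_1,\gamma_2))$, where $V$ counts intersecting index pairs. A marked-vertex decomposition through each common vertex $z$, together with symmetry of $G_\beta$ and submultiplicativity, bounds the resulting error by $\lambda B(\beta)\chi(\beta)^2 \le \lambda B(\beta_c)\chi(\beta)^2$. Since $B(\beta_c)<\infty$ by Corollary~\ref{cor: finiteness bubble}, choosing $\lambda_0$ such that $\lambda B(\beta_c) \le 1/2$ gives $\beta\chi'(\beta) \ge \tfrac{1}{2}\chi(\beta)^2 - \chi(\beta) \ge c\chi(\beta)^2$ whenever $\chi(\beta)$ is large. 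Integrating this inequality yields $\chi(\beta) \le C(\beta_c-\beta)^{-1}$ for $\beta$ close to $\beta_c$, and the bound extends to all of $[\tfrac{1}{2d}, \beta_c)$ by monotonicity of $\chi$ and compactness of the complementary interval.

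Combining $\chi(\beta) \asymp L_\beta^2 \asymp \xi_\beta^2$ with $\chi(\beta) \asymp (\beta_c-\beta)^{-1}$ immediately gives $L_\beta \asymp \xi_\beta \asymp (\beta_c-\beta)^{-1/2}$. I expect the main obstacle to be the reverse differential inequality $\chi'(\beta) \ge c\chi(\beta)^2$: one must carefully identify the bubble-like error produced by the path-splitting through each intersection vertex, and the smallness of $\lambda$ (enabled by bubble finiteness) is essential to absorb this error into the leading $\chi^2$ term.
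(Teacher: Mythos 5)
Your proof is correct and follows essentially the same approach as the paper: relate $\chi$, $\xi_\beta$, and $L_\beta$ to one another via Theorems~\ref{thm:mainwsaw} and~\ref{thm:main2wsaw}, then pin down the $(\beta_c-\beta)$-dependence with the differential inequalities $\chi^2/B \lesssim \chi' \lesssim \chi^2$ and bubble finiteness (the paper simply cites \cite[Sections~4.1--4.2]{BauerschmidtDCGoodmanSlade} for these inequalities, whereas you rederive them, correctly, via the marked-step/marked-vertex decomposition). One small slip: the display ``$-(1/\chi)' \ge -2d$'' should read ``$(1/\chi)' \ge -2d$'' (equivalently $\chi'/\chi^2 \le 2d$), which is what you need to integrate to get $\chi(\beta)\ge c(\beta_c-\beta)^{-1}$.
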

\begin{proof} Let $d>4$ and $\lambda_0$ be given by Corollary \ref{cor: finiteness bubble}. It is classical (see e.g.~\cite[Sections~4.1--4.2]{BauerschmidtDCGoodmanSlade}) that for $\beta<\beta_c$,
\begin{equation}\label{eq: diff on susc}
\frac{\chi(\beta)^2}{B(\beta)}\le \frac{{\rm d}\chi}{{\rm d}\beta}\le  \chi(\beta)^2.
\end{equation}
Combined with Corollary \ref{cor: finiteness bubble}, which gives that $B(\beta)\leq B(\beta_c)<\infty$, \eqref{eq: diff on susc} readily implies \eqref{eq: bounds susceptibility}. 
%

The bounds \eqref{eq: bounds xi} and \eqref{eq: bounds L} are obtained using \eqref{eq: bounds susceptibility}, and Theorems \ref{thm:mainwsaw} and \ref{thm:main2wsaw} twice. More precisely, using Theorems \ref{thm:mainwsaw} and \ref{thm:main2wsaw} together with \eqref{eq: def corr length wsaw} we obtain that 
\begin{equation}\label{eq: two correl length are asymp wsaw}
	\xi_\beta\asymp L_\beta
\end{equation}
for $\tfrac{1}{2d}\leq\beta< \beta_c$, where $\asymp$ means that the ratio of the quantities is bounded away from $0$ and $\infty$ by two constants that are independent of $\beta$. Then, using again Theorem \ref{thm:mainwsaw}, we obtain that
\begin{equation}
	\chi(\beta)\leq\sum_{x\in \mathbb Z^d}\frac{C}{(1\vee |x|)^{d-2}}e^{-|x|/L_\beta}\leq C_1L_\beta^2,
\end{equation}
where $C_1>0$. Moreover, Theorem \ref{thm:main2wsaw} gives the existence of $c_1>0$ such that for $\tfrac{1}{2d}\leq \beta< \beta_c$,
\begin{equation}
	\chi(\beta)\geq \sum_{x\in \Lambda_{L_\beta}}\frac{ce^{-C}}{(1\vee |x|)^{d-2}}\geq c_1 L_\beta^2.
\end{equation}

Combining the two previously displayed equations gives
\begin{equation}
	\chi(\beta)\asymp L_\beta^2
\end{equation}
for $\tfrac{1}{2d}\leq\beta< \beta_c$, which also gives $\chi(\beta)\asymp \xi_\beta^2$ by \eqref{eq: two correl length are asymp wsaw}. The proofs of \eqref{eq: bounds xi} and \eqref{eq: bounds L} follow readily from these observations and from \eqref{eq: bounds susceptibility}. 
\end{proof}

Let us mention that improved versions of \eqref{eq: bounds susceptibility} and \eqref{eq: bounds xi} have been derived by Hara and Slade \cite{HaraSlade1992SAW} (using the lace expansion) in the context of the strictly self-avoiding walk model in dimensions $d>4$. Indeed, \cite[Theorem~1.2]{HaraSlade1992SAW} provides exact asymptotic formulae for these quantities as $\beta$ approaches $\beta_c$.

\subsection{The fundamental inequalities}
A crucial role will be played by the following two inequalities, see Figure \ref{fig: simon-lieb} for an illustration. For completeness, we include the  proof of this (classical) statement in Appendix \ref{appendix:sl}.

\begin{Lem}\label{Lem: Simon-Lieb WSAW} Let $d\geq 2$. For $0<\beta<\beta_c$, $0\in S\subset \Lambda$ with $\Lambda\subset \mathbb Z^d$, and $x\in \Lambda$, 
\begin{align}\label{eq:SL}
G_\beta^\Lambda(0,x)&\le G_\beta^S(0,x)+ \sum_{\substack{y\in S\\ z\in \Lambda\setminus S\\ y\sim z}}G_\beta^S(0,y)\beta G_\beta^\Lambda(z,x),\\
G_\beta^\Lambda(0,x)&\ge G_\beta^S(0,x)+\sum_{\substack{y\in S\\ z\in \Lambda\setminus S\\ y\sim z}}G_\beta^S(0,y)\beta G_\beta^\Lambda(z,x)- \lambda \sum_{u\in S}E_\beta^{S,\Lambda}(u)G_\beta^\Lambda(u,x),\label{eq:reversed SL}
\end{align}
where 
\begin{equation}
E_\beta^{S,\Lambda}(u):=\sum_{\substack{y \in S\\ z\in \Lambda\setminus S\\ y\sim z}}G_\beta^S(0,u)G_\beta^S(u,y) \beta G_\beta^\Lambda(z,u).
\end{equation}
\end{Lem}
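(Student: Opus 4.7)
The plan is to use a first-exit decomposition of the walk together with a careful analysis of how the self-repulsion weight $\rho$ factorizes across the splitting.

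\textbf{Setup.} Let $\gamma:0\to x$ be a walk in $\Lambda$. Either $\gamma$ stays entirely in $S$---contributing exactly $G_\beta^S(0,x)$ to the sum---or $\gamma$ admits a well-defined first exit: a unique index $k$ with $\gamma(0),\ldots,\gamma(k)\in S$ and $\gamma(k+1)\notin S$. Setting $y=\gamma(k)$ and $z=\gamma(k+1)$, write $\gamma=\gamma_1\circ(y,z)\circ\gamma_2$, where $\gamma_1:0\to y$ stays in $S$ and $\gamma_2:z\to x$ stays in $\Lambda$. The weight factorizes exactly as
$$\rho(\gamma)=\rho(\gamma_1)\,\rho(\gamma_2)\prod_{\substack{0\le s\le k\\ k+1\le s'\le|\gamma|}}\bigl(1-\lambda\mathds{1}_{\gamma(s)=\gamma(s')}\bigr),$$
the last product collecting the ``cross'' self-avoidance factors between a vertex of $\gamma_1$ and one of $\gamma_2$. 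The two inequalities will come from two different controls on this cross product.

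\textbf{Upper bound.} Each cross factor lies in $[0,1]$, hence $\rho(\gamma)\le\rho(\gamma_1)\rho(\gamma_2)$. Summing independently over $\gamma_1:0\to y\subset S$, over the boundary edge $(y,z)$ with $y\in S$, $z\in\Lambda\setminus S$, $y\sim z$, and over $\gamma_2:z\to x\subset\Lambda$, the first-exit contribution is bounded by $\sum_{y\sim z}G_\beta^S(0,y)\beta G_\beta^\Lambda(z,x)$, which, added to the ``stays in $S$'' piece, gives \eqref{eq:SL}.

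\textbf{Lower bound.} Apply the elementary inequality $\prod_i(1-a_i)\ge1-\sum_i a_i$, valid for $a_i\in[0,1]$, to the cross product. This yields
$$\rho(\gamma)\ge\rho(\gamma_1)\rho(\gamma_2)-\lambda\,\rho(\gamma_1)\rho(\gamma_2)\sum_{u\in S}N(u,\gamma_1)N(u,\gamma_2),$$
where $N(u,\eta)$ counts visits of $\eta$ to $u$ (coincidences necessarily lie in $S$ since $\gamma_1\subset S$). The leading part reproduces the first two terms of \eqref{eq:reversed SL}. For the error, pull out $u\in S$ and split $\gamma_1$ at a visit to $u$ and $\gamma_2$ at a visit to $u$; applying $\rho\le\rho(\cdot)\rho(\cdot)$ once more (in the same direction as above) gives
$$\sum_{\gamma_1:0\to y\subset S}\beta^{|\gamma_1|}\rho(\gamma_1)N(u,\gamma_1)\le G_\beta^S(0,u)G_\beta^S(u,y),$$
and likewise $\sum_{\gamma_2:z\to x\subset\Lambda}\beta^{|\gamma_2|}\rho(\gamma_2)N(u,\gamma_2)\le G_\beta^\Lambda(z,u)G_\beta^\Lambda(u,x)$. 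Reorganizing the boundary sum so that $G_\beta^\Lambda(u,x)$ sits on the right, the error takes exactly the form $\lambda\sum_{u\in S}E_\beta^{S,\Lambda}(u)G_\beta^\Lambda(u,x)$.

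\textbf{Where the care lies.} The only real obstacle is bookkeeping: one must verify that every auxiliary inequality used for the error term goes in the $\le$ direction (so that, combined with the minus sign in front, the overall inequality still points the right way), and that the Weierstrass-type bound is applied to nonnegative factors (which it is, since $\lambda\in[0,1]$). No analytic difficulty arises, which is why the statement is essentially classical.
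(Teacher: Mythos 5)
Your proof is correct and follows essentially the same route as the paper's: first-exit decomposition at the boundary edge $(y,z)$, factorization of the self-repulsion weight into $\rho(\gamma_1)\rho(\gamma_2)$ times a cross product, the elementary bound $\prod(1-a_i)\ge 1-\sum a_i$ for the lower bound, and a further splitting of $\gamma_1$ and $\gamma_2$ at a common visit $u$ with $\rho(\eta)\le\rho(\eta')\rho(\eta'')$ to reach the error amplitude. The only cosmetic difference is your use of visit counts $N(u,\cdot)$ where the paper sums explicitly over indices $(i,j)$; the bookkeeping, including the direction check on the error term, is handled correctly.
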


The quantity
\begin{equation}
E_\beta^{S,\Lambda}:=\sum_{u\in S}E_\beta^{S,\Lambda}(u)
\end{equation}
will be referred to as the {\em error amplitude}. This quantity will be shown to be finite when $d>4$, which is responsible for the restriction on the dimension in this paper (see \eqref{eq: where d>4 is important}). Controlling the size of this error amplitude will be crucial to the argument.

\begin{figure}
	\begin{center}
		\includegraphics{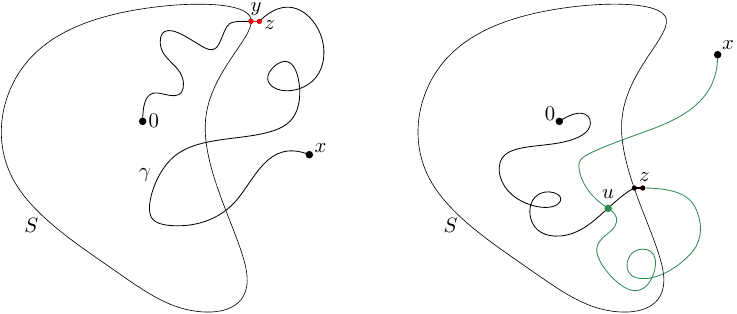}
		\label{fig: simon-lieb}
		\caption{A depiction of Lemma \ref{Lem: Simon-Lieb WSAW}. On the left we illustrate how we decompose a path $\gamma$ contributing to $G_\beta^{\Lambda}(0,x)-G^S_\beta(0,x)$. The first edge leaving $S$ is represented in red. On the right we illustrate a configuration in which the portions of $\gamma$ from $0$ to $z$ (in black) and from $z$ to $x$ (in green) interact through an intersection inside $S$. This situation contributes to the \emph{error} term in \eqref{eq:reversed SL}.}
	\end{center}
\end{figure}

\paragraph{Acknowledgements.} Early discussions with Vincent Tassion have been fundamental to the success of this project. We are tremendously thankful to him for these interactions without which the paper would never have existed. We warmly thank Gordon Slade for stimulating discussions and for useful comments. We also thank Gady Kozma, Christophe Garban, Trishen S. Gunaratnam, Ioan Manolescu, Aman Markar, Christoforos Panagiotis,  Alexis Prévost, Grega Saksida, Florian Schweiger, Vedran Sohinger, Daniel Ueltschi, and an anonymous referee for many useful comments on an earlier version of this paper. This project has received funding from the Swiss National Science Foundation, the NCCR SwissMAP, and the European Research Council (ERC) under the European Union’s Horizon 2020 research and innovation programme (grant agreement No. 757296). HDC acknowledges the support from the Simons collaboration on localization of waves.

\section{Proof of Theorem~\ref{thm:mainwsaw}}

We will use a bootstrap argument (following the original idea from \cite{Slade1987Diffusion}) and prove that an a priori estimate on the two-point function can be improved for sufficiently small $\lambda$. One original feature of our proof is that the key quantities we track in the bootstrap involve the half-space rather than the full-space. This contrasts for instance with the lace expansion, in which all arguments require the full-space two-point function.

The idea will be to observe that the two inequalities of Lemma \ref{Lem: Simon-Lieb WSAW} provide a good control on $\varphi_\beta(\mathbb H_n)$, which can be interpreted as an averaged (or $\ell^1$) estimate on the half-space two-point function at distance $n$. The point-wise (or $\ell^\infty$) half-space estimate \eqref{eq:bound half plane} will follow from a \emph{regularity} property which allows to compare two-point functions ending at close points. Finally, we will deduce the full-space estimate from the half-space one.

To implement this scheme, we introduce the following quantity $\beta^*$.

\begin{Def} Fix $d>4$ and $\lambda\in(0,1)$. Let $C>0$ to be fixed later\footnote{One may think that $C$ will be chosen first to be very large, and then $\lambda$ small enough.}. For every $\beta\geq 0$, we introduce the conditions \eqref{eq:H_beta} and \eqref{eq:H_beta-''} defined as follows:
\begin{align}
\varphi_\beta(\mathbb H_n)&< 1+\frac{1}{2d} &\forall n\ge0,\label{eq:H_beta}\tag{$\ell^1_\beta$}\\
 G_\beta^{\mathbb H_n}(0,x)&< \frac{C}{(1\vee n)^{d-1}} &\forall n\ge0,\forall x\in\partial\mathbb H_n\label{eq:H_beta-''}\tag{$\ell^\infty_\beta$}.
\end{align}
We let 
\begin{equation}
\beta^*=\beta^*(C,\lambda):=\sup\{\beta \in [0,\beta_c]: \textup{\eqref{eq:H_beta} and \eqref{eq:H_beta-''} hold}\}.\end{equation}

\end{Def}

\begin{Rem}
Note that $(\ell^1_{\beta})$ and $(\ell^\infty_{\beta'})$ are monotonic conditions in the sense that if they both hold, then $(\ell^1_{\beta'})$ and $(\ell^\infty_{\beta'})$ hold for every $\beta'\leq \beta$. The $\tfrac{1}{2d}$ is quite arbitrary in \eqref{eq:H_beta}. In fact, we could take any number in $(0,\tfrac{1}{2d}]$. 
\end{Rem}

 A priori, it could be that $\beta^*(C,\lambda)=0$. However, this is not the case if we choose $C$ large enough.
Indeed, we claim that $\beta^*\ge \tfrac1{2d}$ when $C$ exceeds a large enough constant $\bfC_{\rm RW}=\bfC_{\rm RW}(d)>0$, as a bound by the corresponding random walk quantities implies that the estimates are true at $\beta=\frac1{2d}$.

 More precisely, notice that $\varphi_\beta(\mathbb H_n)$ and $G_\beta^{\mathbb H_n}(0,x)$ are maximal when $\lambda=0$, which corresponds to the simple random walk. Let us denote by $\mathbb P_0$ the law of the $d$-dimensional simple random walk $(X_k)_{k\geq 0}$ started at $0$. Let $n\geq 0$ and define $\tau^n:=\inf\{k\geq 1: X_k\notin \mathbb H_n\}$. For $\beta=\tfrac{1}{2d}$,
 \begin{equation}\label{eq:phi beta rw =1}
 	\varphi_{\beta,\lambda=0}(\mathbb H_n)=\mathbb P_0[\tau^n<\infty]=1,
 \end{equation}
 where the first equality follows from Markov's property, and the second one from a Gambler's ruin estimate (see \cite[Theorem~5.1.7]{LawlerLimicRandomWalks2010}) applied to the random walk induced by the projection of $X$ on its first coordinate. Moreover, Proposition \ref{prop: halfspace green function estimate} implies the existence of $\bfC_{\rm RW}=\bfC_{\rm RW}(d)>0$ such that for all $n\geq 1$ and for all $x \in \partial \mathbb H_n$, one has 
 \begin{equation}
 	G_{\beta,\lambda=0}^{\mathbb H_n}(0,x)=\mathbb E_0\Big[\sum_{\ell<\tau^n}\mathds{1}_{X_\ell=x}\Big]<\frac{\bfC_{\rm RW}}{(1\vee n)^{d-1}}.
 \end{equation}
Hence, choosing $C>\bfC_{\rm RW}$ implies that for all $\lambda\in(0,1)$, one has $\beta^*(C,\lambda)\geq \tfrac{1}{2d}$.

Below (see Lemma \ref{lem: full plane 2pt function out of halfplane one}), we will show how to derive an upper bound on the full-space two-point function under the hypothesis that $\beta\leq \beta^*(C,\lambda)$. Thus, in order to prove Theorem \ref{thm:mainwsaw}, it suffices to show that $\beta^*$ is equal to $\beta_c$ provided that $C$ and $\lambda$ are properly chosen. 

We proceed in three steps. First, we show that the bound on $\varphi_\beta(\mathbb H_n)$ can be improved when $\beta<\beta^*$.  Second, we control the gradient of the two-point function. Third, we use the fact that the two-point function does not fluctuate too much (thanks to the second point) to obtain an improved $\ell^\infty$ estimate. From these improvements, we obtain that $\beta^*$ cannot be strictly smaller than $\beta_c$, since in this case the improved estimates would remain true for $\beta$ slightly larger than $\beta^*$, which would contradict the definition of $\beta^*$.

\subsection{Improving the bound on $\varphi_\beta(\mathbb H_n)$}\label{sec:improvingphiwsaw}

This section is the crucial step of our strategy: from the bounds \eqref{eq:H_beta} and \eqref{eq:H_beta-''}, we obtain a bound on $\varphi_\beta(\mathbb H_n)$ that involves the parameter $\lambda$. For small $\lambda$, this bound is an improvement on \eqref{eq:H_beta}. Recall that $\bfC_{\rm RW}$ satisfies the following property: for every $C>\bfC_{\rm RW}$ and every $\lambda\in (0,1)$, $\beta^*(C,\lambda)\geq \tfrac{1}{2d}$.

\begin{Prop}[Improving the bound on $\varphi_\beta(\mathbb H_n)$]\label{prop:bound phi}
Fix $d>4$ and $C>\bfC_{\rm RW}$. There exists $K=K(C,d)<\infty$ such that for every $\lambda\in (0,1)$, every $\beta<\beta^*(C,\lambda)$, and every $n\ge0$, 
\begin{align}
\varphi_\beta(\mathbb H_n)&<1+K\lambda,\\
\sup\{\varphi_\beta(B):B\in \mathcal B\}&\le 1+K\lambda\label{eq:bound varphi box wsaw},
\end{align}
where $\mathcal B$ is the set of blocks, that is $\mathcal B:=\{(\prod_{i=1}^d [a_i,b_i])\cap \mathbb Z^d: \forall 1\leq i \leq d,\: a_i\leq 0\leq b_i \}$.
\end{Prop}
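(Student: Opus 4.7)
The approach is to derive a self-consistent inequality for $\varphi_\beta(S)$ -- with $S$ either a half-space $\mathbb H_n$ or a block $B\in\mathcal B$ -- by applying the reversed Simon--Lieb inequality \eqref{eq:reversed SL} with $\Lambda=\mathbb Z^d$ and summing over $x\in\mathbb Z^d$. By translation invariance $\sum_{x\in\mathbb Z^d}G_\beta(u,x)=\chi(\beta)$ for every $u\in \mathbb Z^d$, this yields
\begin{equation*}
    \chi(\beta)\;\ge\;\chi^S(\beta)+\varphi_\beta(S)\,\chi(\beta)-\lambda\,\mathcal E(S)\,\chi(\beta),
    \qquad\mathcal E(S):=\sum_{u}E^{S,\mathbb Z^d}_\beta(u).
\end{equation*}
Since $\beta<\beta^*\le\beta_c$ implies $0<\chi(\beta)<\infty$, and since $\chi^S(\beta)\ge 0$, dividing by $\chi(\beta)$ gives the key inequality
\begin{equation*}
    \varphi_\beta(S)\;\le\;1+\lambda\,\mathcal E(S).
\end{equation*}
The problem therefore reduces to establishing a bound $\mathcal E(S)\le K(C,d)$, uniform in $S$ (i.e., in $n$ for half-spaces and over all blocks), in $\beta<\beta^*$, and in $\lambda$.

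Bounding the error amplitude is the heart of the argument. Unwinding definitions,
\begin{equation*}
    \mathcal E(S)=\sum_{u\in S}G^S_\beta(0,u)\cdot\psi_\beta(u;S),\qquad
    \psi_\beta(u;S):=\sum_{\substack{y\in S,\,z\notin S\\ y\sim z}}G^S_\beta(u,y)\,\beta\,G_\beta(z,u),
\end{equation*}
which is a triangle-type diagram anchored at $0$ with internal vertex $u$ and a sum along the boundary of $S$. The plan is to combine three ingredients: (i) the full-space pointwise estimate $G_\beta(0,x)\le C'(1\vee|x|)^{-(d-2)}$ (derived from \eqref{eq:H_beta-''} via the forthcoming Lemma \ref{lem: full plane 2pt function out of halfplane one}); (ii) the boundary estimate $G^S_\beta(u,y)\le C(1\vee \mathrm{dist}(u,\partial S))^{-(d-1)}$ obtained from \eqref{eq:H_beta-''} after translating the origin to $u$ (for $y\in\partial S$); and (iii) the averaged bound $\sum_{y,z}G^S_\beta(u,y)\,\beta\le 1+\tfrac{1}{2d}$ from \eqref{eq:H_beta}, also by translation. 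Switching between (i)--(ii) on $G^S_\beta(u,y)$ according to whether $y$ is close to or far from $u$ in the transverse directions (while $|y-z|=1$ gives $G_\beta(z,u)$ pointwise comparable to $G_\beta(y,u)$), one controls the inner bracket by $\psi_\beta(u;S)\lesssim (1\vee\mathrm{dist}(u,\partial S))^{-(d-2)}$. Plugging into $\mathcal E(S)$ and summing over $u$ using (i) leads to a triangle/bubble-type sum whose convergence is precisely the condition $d>4$ (essentially through $\sum_{x\in\mathbb Z^d}(1\vee|x|)^{-2(d-2)}<\infty\Leftrightarrow d>4$).

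The main obstacle is extracting from the a priori estimates \eqref{eq:H_beta}--\eqref{eq:H_beta-''} a bound on $\mathcal E(S)$ that is independent of the scale of $S$: a naive estimate using only the monotonicity $G^S_\beta\le G_\beta$ loses the boundary-induced decay captured by \eqref{eq:H_beta-''} and yields bounds that diverge with $n$ or with the dimensions of $B$. The delicate point is to combine (i) and (ii) at the correct transverse scale so that the resulting sum is both finite and scale-invariant. The block statement follows from the same scheme with $S=B$: for any half-space $\mathbb H\supset B$, monotonicity $G^B_\beta\le G^{\mathbb H}_\beta$ transfers the half-space bounds of \eqref{eq:H_beta-''} to block Green's functions, and the corresponding error sum is controlled by the same triangle-type estimate.
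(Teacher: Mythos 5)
Your outer reduction is exactly the paper's: sum the reversed Simon--Lieb inequality \eqref{eq:reversed SL} over $x\in\mathbb Z^d$, divide by $\chi(\beta)$, and conclude $\varphi_\beta(S)\le 1+\lambda E^{S,\mathbb Z^d}_\beta$. You also correctly identify that the task reduces to a scale-uniform bound on the error amplitude and that $d>4$ enters at this step. Moreover, your intermediate bound $\psi_\beta(u;S)\lesssim(1\vee\mathrm{dist}(u,\partial S))^{-(d-2)}$ is right and is essentially how the paper proceeds (it pulls out $\max_z G_\beta(z,u)\lesssim(k+1)^{-(d-2)}$ and uses $(\ell^1_\beta)$ for $\sum_{y,z}G_\beta^{S}(u,y)\beta$, where $k=\mathrm{dist}(u,\partial S)$).

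The gap is in the final step, where you propose to sum over $u$ using the full-space \emph{pointwise} estimate (your (i)) and claim the result is a convergent bubble-type sum. For $S=\mathbb H_n$ this fails: $\partial\mathbb H_{n-k}$ is an infinite slab, and a pointwise bound has no decay in the $(d-1)$ transverse directions, so
\begin{equation*}
\sum_{u\in\partial\mathbb H_{n-k}}\frac{1}{(1\vee|u|)^{d-2}}
\;\gtrsim\;\sum_{u_\perp\in\mathbb Z^{d-1}\,:\,|u_\perp|>|n-k|}\frac{1}{|u_\perp|^{d-2}}=\infty,
\end{equation*}
and the same divergence persists even if you substitute the sharper half-space pointwise bounds of Lemma~\ref{lem: full plane 2pt function out of halfplane one}, since none of them decay transversally. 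What the paper uses instead is the genuinely $\ell^1$-type estimate \eqref{eq: sum at distance k from halfplane}, $\sum_{u\in\partial\mathbb H_{n-k}}G_\beta^{\mathbb H_n}(0,u)\beta\le 4(k+1)$, obtained by iterating the Simon--Lieb decomposition \eqref{eq: decomposition at distance k from hyperplane} and using $(\ell^1_\beta)$ twice; this exploits cancellations that no pointwise bound can see. With this the outer sum is $\sum_{k\ge0}\frac{(k+1)}{(k+1)^{d-2}}<\infty$ precisely for $d>4$. To repair your proof you must replace the pointwise summation over $u$ by this averaged, slab-by-slab $\ell^1$ estimate. (Your treatment of blocks is fine modulo this fix; the paper shortcuts via $E^{B,\mathbb Z^d}_\beta\le 2d\max_k E^{\mathbb H_k,\mathbb Z^d}_\beta$, bounding each face by a half-space.)
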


In the rest of Section \ref{sec:improvingphiwsaw}, we fix $\lambda\in (0,1)$ and drop it from the notation. We start with a number of elementary bounds on the two-point function in the bulk and in half-space induced by the assumption that $\beta<\beta^*$.
\begin{Lem}\label{lem: full plane 2pt function out of halfplane one}Fix $d>4$ and $C>0$. For every $\beta<\beta^*(C)$, 
\begin{align}\label{eq: full plane estimate from half plane}
    G_\beta(0,x)&\le \frac{3C}{(1\vee|x|)^{d-2}}\qquad&\forall x\in \mathbb Z^d,\\ \label{eq: half plane at distance $k$ from half space}
    G_\beta^{\mathbb H_n}(0,x)&\le \frac{2C(k+1)}{(n-k)^{d-1}}\qquad&\forall x\in \partial\mathbb H_{n-k}\text{ with }1\le k<n,\\ \label{eq: sum at distance k from halfplane}
    \sum_{x\in \partial\mathbb H_{n-k}}G_\beta^{\mathbb H_n}(0,x)\beta&\le 4(k+1)\qquad&\text{ with }n,k\ge0.
\end{align}
\end{Lem}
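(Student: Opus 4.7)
The plan is to derive the three bounds from the conditions $(\ell^1_\beta)$ and $(\ell^\infty_\beta)$ defining $\beta^*(C)$, via Simon-Lieb (Lemma \ref{Lem: Simon-Lieb WSAW}) and the lattice symmetries of $\mathbb Z^d$. I prove them in the order \eqref{eq: sum at distance k from halfplane}, \eqref{eq: half plane at distance $k$ from half space}, \eqref{eq: full plane estimate from half plane}; each argument uses the same ``peel-off-one-layer'' Simon-Lieb maneuver, and the key auxiliary is a symmetry obtained by combining transverse translation invariance with the identity $G_\beta^\Lambda(u,v) = G_\beta^\Lambda(v,u)$.

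Set $F_\beta(n,k) := \sum_{x \in \partial\mathbb H_{n-k}} G_\beta^{\mathbb H_n}(0,x)\beta$. For \eqref{eq: sum at distance k from halfplane}, I first establish the symmetry $F_\beta(n,k) = F_\beta(k,n)$ by translating $0$ onto $\partial\mathbb H_n$ via a shift of $n\mathbf{e}_1$, applying the symmetry of the two-point function together with transverse translation invariance in $\mathbf{e}_2, \ldots, \mathbf{e}_d$, and translating the endpoint back onto a boundary. Assuming WLOG $n \ge k+1$, Simon-Lieb with $S = \mathbb H_{n-k-1}$ kills the leading term (since $x \notin \mathbb H_{n-k-1}$); the error, after translating $x$ to $0$ in the inner sum, factors as $\varphi_\beta(\mathbb H_{n-k-1}) \cdot F_\beta(k,k) < 2F_\beta(k,k)$. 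To bound $F_\beta(k,k)$, apply Simon-Lieb with $S = \mathbb H_{k-1}$ in $\Lambda = \mathbb H_k$; the resulting error factors as $\varphi_\beta(\mathbb H_{k-1}) \cdot F_\beta(0,k)$, and by the symmetry $F_\beta(0,k) = F_\beta(k,0) = \varphi_\beta(\mathbb H_k) < 1 + \tfrac{1}{2d}$. This yields the recursion $F_\beta(k,k) \le F_\beta(k-1,k-1) + 2$, so $F_\beta(k,k) \le 2(k+1)$ by induction, whence $F_\beta(n,k) < 4(k+1)$.

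For \eqref{eq: half plane at distance $k$ from half space}, iterate Simon-Lieb $k$ times: apply it to $G_\beta^{\mathbb H_{n-j}}(0,x)$ with $S = \mathbb H_{n-j-1}$ for $j = 0, \ldots, k-1$. Each error term is bounded by $\sup_y G_\beta^{\mathbb H_{n-j-1}}(0,y) \cdot \sum_y G_\beta^{\mathbb H_{n-j}}(y - \mathbf{e}_1, x)\beta \le (C/(n-j-1)^{d-1}) \cdot \varphi_\beta(\mathbb H_{k-j}) < 2C/(n-j-1)^{d-1}$, via $(\ell^\infty_\beta)$, $(\ell^1_\beta)$, and the translation $x \to 0$ (under which $\mathbb H_{n-j}$ becomes $\mathbb H_{k-j}$). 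Combined with the base bound $G_\beta^{\mathbb H_{n-k}}(0,x) \le C/(n-k)^{d-1}$ from $(\ell^\infty_\beta)$, summing yields $G_\beta^{\mathbb H_n}(0,x) \le C(1 + 2k)/(n-k)^{d-1}$. For \eqref{eq: full plane estimate from half plane}, by the $\mathbb Z^d$-lattice symmetries WLOG $x_1 = -|x| = -m$ with $m \ge 1$, and use the telescoping sum $G_\beta(0,x) = G_\beta^{\mathbb H_m}(0,x) + \sum_{N > m}[G_\beta^{\mathbb H_N}(0,x) - G_\beta^{\mathbb H_{N-1}}(0,x)]$ (valid since $G_\beta^{\mathbb H_{m-1}}(0,x) = 0$). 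Bound each increment by the same Simon-Lieb maneuver to get $\le 2C/(N-1)^{d-1}$; the resulting series is controlled by integral comparison, producing for $d > 4$ a total of order $C/m^{d-2}$. The case $x = 0$ is handled by the same telescoping with $m = 0$.

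The main obstacle is tracking the precise constants stated in the lemma. Getting the ``$3C$'' in \eqref{eq: full plane estimate from half plane} requires
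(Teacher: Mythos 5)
Your strategy is sound and runs on the same fuel as the paper's — $(\ell^1_\beta)$, $(\ell^\infty_\beta)$, and Lemma~\ref{Lem: Simon-Lieb WSAW} — but it is organized genuinely differently. For \eqref{eq: sum at distance k from halfplane}, the paper decomposes a walk from $x\in\partial\mathbb H_{n-k}$ to $0$ by its leftmost excursion level, sums over $x$, and applies $(\ell^1_\beta)$ to \emph{both} factors, producing $\varphi_\beta(\mathbb H_{n-k})+\sum_{j=1}^k\varphi_\beta(\mathbb H_{k-j})\varphi_\beta(\mathbb H_{n-j+1})/\beta$. You instead exploit the symmetry $F_\beta(n,k)=F_\beta(k,n)$ — which does hold: $G_\beta^{\mathbb H_n}(0,x)=G_\beta^{\mathbb H_n-x}(-x,0)=G_\beta^{\mathbb H_k}(0,-x)$ for $x_1=k-n$, so summing over $x$ exchanges $n$ and $k$ — to reduce to the diagonal $F_\beta(k,k)$, which you then bound by the clean one-step recursion $F_\beta(k,k)\le F_\beta(k-1,k-1)+(1+\tfrac1{2d})^2$. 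This avoids the paper's $j$-sum entirely and is arguably the nicer proof of that bound. Your treatments of \eqref{eq: half plane at distance $k$ from half space} and \eqref{eq: full plane estimate from half plane} are close mirror images of the paper's, peeling Simon-Lieb layers from the $0$-side rather than from the endpoint's side; for $|x|\ge1$ the arithmetic comfortably stays under $3C$. The one thing to tighten — consistent with your truncated last sentence — is the $x=0$ case of \eqref{eq: full plane estimate from half plane}: placing $(\ell^\infty_\beta)$ on $G_\beta^{\mathbb H_{N-1}}(0,\cdot)$ gives an $N$-th increment of order $(1+\tfrac1{2d})C/(1\vee(N-1))^{d-1}$, whose sum over $N\ge1$ contributes more than $2C$, so after adding $G_\beta^{\mathbb H}(0,0)\le C$ you land above $3C$. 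The fix is to swap roles there: apply $(\ell^\infty_\beta)$ to $G_\beta^{\mathbb H_N}(z,0)$ (distance $N$ from the boundary) and $(\ell^1_\beta)$ to the $y$-sum, giving $\sum_{N\ge1}\varphi_\beta(\mathbb H_{N-1})\tfrac{C}{N^{d-1}}\le 2C$ as in the paper, hence $G_\beta(0,0)\le 3C$.
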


\begin{proof} Let us start with \eqref{eq: full plane estimate from half plane}. Without loss of generality, assume that $x_1=|x|$. Consider a path from $0$ to $x$.
If the path is not included in $\mathbb H$, decompose it according to the first edge $yz$ where $z$ is a left-most point; see Figure \ref{fig: Lemma1}. We first consider the case $|x|\geq 1$. This yields
\begin{align}    G_\beta(0,x)&\stackrel{\phantom{\eqref{eq:H_beta-''}}}\le G^{\mathbb H}_\beta(0,x)+\sum_{n\ge 0}\sum_{\substack{y\in \mathbb H_{n}\\ z\notin \mathbb H_n\\ y\sim z}} G_\beta^{\mathbb H_{n}}(0,y)\beta  G_\beta^{\mathbb H_{n+1}}(z,x)\\
    &\stackrel{\eqref{eq:H_beta-''}}\le\frac{C}{(1\vee|x|)^{d-1}}+ \sum_{n\ge 0}\varphi_\beta(\mathbb H_n)\frac{C}{(|x|+n+1)^{d-1}}\\
    &\stackrel{\eqref{eq:H_beta}}\le \frac{C(1+\tfrac{1+(2d)^{-1}}{d-2})}{|x|^{d-2}}\le \frac{2C}{|x|^{d-2}},\label{eq:key inequality half to full}
    \end{align}
  where in the penultimate inequality, we used that $\sum_{n\geq \alpha}\tfrac{1}{(n+1)^{d-1}}\leq \tfrac{1}{(d-2)\alpha^{d-2}}$ for every $\alpha\ge1$. When $x=0$, we may run the same argument except that now we use the bounds $\sum_{n\geq 0}\tfrac{1}{(n+1)^{d-1}}\leq 1+\tfrac{1}{d-2}$ and $(1+\tfrac{1}{2d})(1+\tfrac{1}{d-2})\leq 2$ (when $d>4$) to obtain that
  \begin{equation}
  	\sum_{n\geq 0}\varphi_\beta(\mathbb H_n)\frac{C}{(n+1)^{d-1}}\leq 2C.
  \end{equation}
  This implies, using \eqref{eq:key inequality half to full}, that $G_\beta(0,0)\leq 3C$, and concludes the proof of \eqref{eq: full plane estimate from half plane}.
    
  We turn to \eqref{eq: half plane at distance $k$ from half space}. Let $1\leq k <n$. Pick $x\in \partial\mathbb H_{n-k}$. To bound $G_\beta^{\mathbb H_n}(x,0)=G_\beta^{\mathbb H_n}(0,x)$, decompose the path from $x$ to 0 in the same fashion as above (see Figure \ref{fig: Lemma1}) to get
    \begin{align}\label{eq: decomposition at distance k from hyperplane}
  	G^{\mathbb H_n}_\beta(x,0)&\stackrel{\phantom{\eqref{eq:H_beta-''}}}\leq G^{\mathbb H_{n-k}}_\beta(x,0)+\sum_{j=1}^{k}\sum_{\substack{y\in \mathbb H_{n-j}\\z\notin \mathbb H_{n-j}\\ y\sim z}}G_\beta^{\mathbb H_{n-j}}(x,y)\beta G_\beta^{\mathbb H_{n-j+1}}(z,0)
  	\\&\stackrel{\eqref{eq:H_beta-''}}\leq\frac{C}{(n-k)^{d-1}}+\sum_{j=1}^{k} \Big(\sum_{\substack{y\in \mathbb H_{n-j}\\z\notin \mathbb H_{n-j}\\ y\sim z}}G_\beta^{\mathbb H_{n-j}}(x,y)\beta\Big) \frac{C}{(n-j+1)^{d-1}}
  	\\&\stackrel{\eqref{eq:H_beta}}\leq \frac{C(1+(2d)^{-1})(k+1)}{(n-k)^{d-1}}\le \frac{2C(k+1)}{(n-k)^{d-1}}.
  \end{align}
  
  For the proof of \eqref{eq: sum at distance k from halfplane}, consider the same decomposition \eqref{eq: decomposition at distance k from hyperplane} and sum it over $x\in \partial \mathbb H_{n-k}$, then use \eqref{eq:H_beta} twice instead of \eqref{eq:H_beta} and \eqref{eq:H_beta-''}. When\footnote{We used that \eqref{eq: decomposition at distance k from hyperplane} is also valid when $k=n$.} $1\leq k \leq n$, this gives 
 \begin{align}
 	\sum_{x\in \partial \mathbb H_{n-k}}G_\beta^{\mathbb H_n}(0,x)&\stackrel{\eqref{eq: decomposition at distance k from hyperplane}}\leq \frac{\varphi_\beta(\mathbb H_{n-k})}{\beta}+ \sum_{j=1}^k\varphi_\beta(\mathbb H_{k-j})\frac{\varphi_\beta(\mathbb H_{n-j+1})}{\beta}
 	\\&\stackrel{\eqref{eq:H_beta}}\leq \frac{1}{\beta}\Big((1+(2d)^{-1})+k(1+(2d)^{-1})^2\Big)\leq \frac{4(k+1)}{\beta}.
 \end{align}
When $k>n$ we adapt \eqref{eq: decomposition at distance k from hyperplane} and get
\begin{equation}\label{eq: second decomp third equation}
	G_\beta^{\mathbb H_n}(x,0)\leq G_\beta^{\mathbb H}(x,0)+\sum_{j=1}^n\sum_{\substack{y\in \mathbb H_{n-j}\\z\notin \mathbb H_{n-j}\\ y\sim z}}G_\beta^{\mathbb H_{n-j}}(x,y)\beta G_\beta^{\mathbb H_{n-j+1}}(z,0).
\end{equation}
Summing \eqref{eq: second decomp third equation} over $x\in \partial\mathbb H_{n-k}$, we obtain similarly,
\begin{equation}
	\sum_{x\in \partial\mathbb H_{n-k}}G_\beta^{\mathbb H_n}(0,x)\leq \frac{4(n+1)}{\beta}\leq \frac{4(k+1)}{\beta}.
\end{equation}
 This concludes the proof.
\end{proof}
\begin{figure}[!htb]
	\centering
    \includegraphics{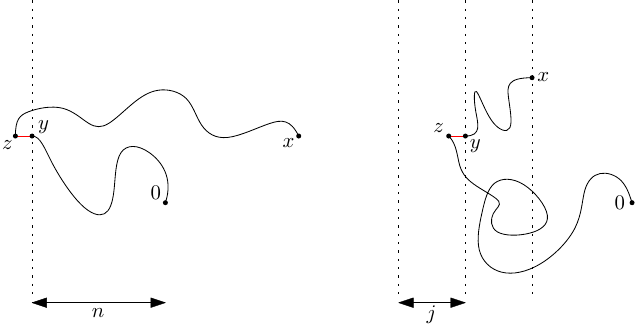}
    \put(-45,140){$\partial\mathbb H_{n-k}$}
    \put(-110,140){$\partial\mathbb H_{n}$}
    \put(-286,140){$\partial\mathbb H_{n}$}
    \caption{On the left, an illustration of the decomposition of the path used in the proof of \eqref{eq: full plane estimate from half plane}. The red edge $yz$ is the earliest edge satisfying that $z$ is a left-most point of the path. On the right, an illustration of the decomposition of the path used in the proof of \eqref{eq: half plane at distance $k$ from half space}.}
    \label{fig: Lemma1}
\end{figure}

We now turn to the estimate of the error amplitude in \eqref{eq:reversed SL} when $\beta<\beta^*$. 
\begin{Lem}[Bounding the error amplitude]\label{lem:bound error}
Fix $d>4$ and $C>\bfC_{\rm RW}$. There exists $K=K(C,d)>0$ such that for every 
$\beta<\beta^*(C)$, and every $n\ge0 $, 
\begin{align}
E_\beta^{\mathbb H_n,\mathbb Z^d}&\le K,\\
\sup\big\{E_\beta^{B,\mathbb Z^d}:B\in \mathcal B\}&\le K.\label{eq:bound error 2 wsaw}
\end{align}
\end{Lem}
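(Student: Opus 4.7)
The plan is to write the error amplitude as $E_\beta^{\mathbb H_n,\mathbb Z^d} = \sum_{u \in \mathbb H_n} G_\beta^{\mathbb H_n}(0,u)\cdot \Sigma(u)$, where $\Sigma(u) := \sum_{y \in \partial \mathbb H_n} G_\beta^{\mathbb H_n}(u,y)\,\beta\,G_\beta(y-\mathbf e_1,u)$ (only such pairs $(y,z)$ can contribute to the inner sum in the definition of $E_\beta^{\mathbb H_n,\mathbb Z^d}(u)$, since $\mathbb H_n$ is a half-space). I will bound this in two stages: first, a pointwise ``sup times $\varphi$'' estimate on $\Sigma(u)$ that decays in the depth $j := n+u_1 \ge 0$ of $u$; second, a layered sum over $u \in \mathbb H_n$ using the averaged estimate \eqref{eq: sum at distance k from halfplane}. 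The expected main obstacle is the convergence of the resulting series, which precisely forces the restriction $d > 4$.

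For the first stage, observe that $|y-\mathbf e_1-u|_\infty \ge j+1$, so that \eqref{eq: full plane estimate from half plane} yields $G_\beta(y-\mathbf e_1,u) \le 3C/(j+1)^{d-2}$ uniformly in $y \in \partial \mathbb H_n$. On the other hand, by translation invariance of the lattice, $\sum_y G_\beta^{\mathbb H_n}(u,y)\,\beta$ equals $\varphi_\beta(\mathbb H_j)$, and hence is at most $1 + 1/(2d)$ by \eqref{eq:H_beta}. Factoring out the supremum gives $\Sigma(u) \le (9C/2)\,(j+1)^{-(d-2)}$, with no surviving $\beta$ from the inner sum.

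For the second stage, I group $u \in \mathbb H_n$ by depth $k = j(u)$ and apply \eqref{eq: sum at distance k from halfplane} in the form $\sum_{u:\, j(u)=k} G_\beta^{\mathbb H_n}(0,u) \le 4(k+1)/\beta$, obtaining
\[
E_\beta^{\mathbb H_n,\mathbb Z^d} \;\le\; \frac{18 C}{\beta} \sum_{k \ge 0} (k+1)^{3-d},
\]
where the series converges if and only if $d > 4$. The factor $1/\beta$ is harmless because the hypothesis $C > \bfC_{\rm RW}$ forces $\beta^*(C) \ge 1/(2d)$, and, since $E_\beta^{\mathbb H_n,\mathbb Z^d}$ is monotone increasing in $\beta$ (every two-point function and the explicit $\beta$ in its definition are), the values $\beta < 1/(2d)$ reduce to the case $\beta = 1/(2d)$. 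For the block bound \eqref{eq:bound error 2 wsaw}, the same scheme applies separately to each of the (at most) $2d$ axis-aligned faces of $B$, using the axis-symmetric analogues of \eqref{eq:H_beta} and \eqref{eq:H_beta-''} (available by lattice symmetry) together with the monotonicity $G_\beta^B \le G_\beta^{H_i}$, where $H_i \supset B$ is the half-space bounded by the face under consideration. The only new piece of bookkeeping is a translated version of \eqref{eq: sum at distance k from halfplane} in which the origin sits in the interior of $B$ rather than on the relevant face; this follows by iterating Lemma~\ref{Lem: Simon-Lieb WSAW} along the appropriate axis, exactly as in the proof of \eqref{eq: sum at distance k from halfplane} itself.
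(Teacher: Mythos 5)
Your argument is correct and mirrors the paper's proof: both layer $u$ by its depth $k$ from $\partial\mathbb H_n$ and combine \eqref{eq: full plane estimate from half plane}, the bound \eqref{eq:H_beta}, and \eqref{eq: sum at distance k from halfplane} to produce the series $\sum_{k\ge 0}(k+1)^{3-d}$, which converges exactly when $d>4$. Two minor remarks: the monotonicity reduction should target a threshold strictly below $\tfrac{1}{2d}$ (the paper uses $\tfrac{1}{4d}$), since $\beta^*$ could equal $\tfrac{1}{2d}$ while \eqref{eq:H_beta} and \eqref{eq:H_beta-''} are only guaranteed for $\beta<\beta^*$; and for the block bound \eqref{eq:bound error 2 wsaw} the paper avoids re-running the layered sum per face, instead observing directly that $E_\beta^{B,\mathbb Z^d}\le 2d\max\{E_\beta^{\mathbb H_k,\mathbb Z^d}:k\ge0\}$ via the comparisons $G_\beta^B\le G_\beta^{H_i}$ and $B\subset H_i$ for each of the $2d$ bounding half-spaces $H_i$.
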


\begin{proof} The second inequality follows from the first one (by changing $K$) since for every $B\in \mathcal B$,
\begin{align}\label{eq:bound box by halfspace}
E_\beta^{B,\mathbb Z^d}&\le 2d\max \big\{E_\beta^{\mathbb H_k,\mathbb Z^d}:k\geq 0\}.
\end{align}
For the first inequality, we notice that $E_\beta^{\mathbb H_n,\mathbb Z^d}$ is increasing in $\beta$, so that it is sufficient to prove the bound for $\beta\geq \tfrac{1}{4d}$ (recall that since $C>\bfC_{\rm RW}$, one has $\beta^*\ge\tfrac1{2d}$). The previous lemma (more specifically \eqref{eq: full plane estimate from half plane} and \eqref{eq: sum at distance k from halfplane}) and \eqref{eq:H_beta} give
\begin{align}
E_\beta^{\mathbb H_n,\mathbb Z^d}
&= \sum_{k\ge 0} \sum_{u\in \partial\mathbb H_{n-k}}\sum_{\substack{y \in\mathbb H_n\\ z\notin \mathbb H_n\\ y\sim z}}G_\beta^{\mathbb H_n}(0,u)G_\beta^{\mathbb H_n}(u,y)\beta G_\beta(z,u)\\
&\le
\sum_{k\ge 0}\Big( \sum_{u\in\partial\mathbb H_{n-k}}G_\beta^{\mathbb H_n}(0,u) \Big)\cdot \Big(1+\frac{1}{2d}\Big)\cdot \frac{3C}{(k+1)^{d-2}}
\\&\le \sum_{k\ge 0} \frac{4(k+1)}{\beta} \cdot \Big(1+\frac{1}{2d}\Big)\cdot \frac{3C}{(k+1)^{d-2}}\label{eq: where d>4 is important}
\end{align}
which is finite and depends only on $C,d$ as soon as $d>4$ and $\beta\geq\tfrac1{4d}$. 
\end{proof}

We are now in a position to prove Proposition~\ref{prop:bound phi}.

\begin{proof}[Proof of Proposition~\textup{\ref{prop:bound phi}}]
Summing \eqref{eq:reversed SL} for $S=\mathbb H_n$ and $\Lambda=\mathbb Z^d$ over every $x\in \mathbb Z^d$ gives
\begin{equation}\label{eq: bound phi_n1 wsaw}
\varphi_{\beta}(\mathbb H_n)\chi(\beta)-\lambda \chi(\beta)E_{ \beta}^{\mathbb H_n,\mathbb Z^d}\leq \chi(\beta),
\end{equation}
where we used that $G^{\mathbb H_n}_\beta(0,x)\geq 0$ for all $x\in \mathbb Z^d$. Dividing by $\chi(\beta)$ and using Lemma~\ref{lem:bound error}, we obtain
\begin{equation}
    \varphi_{\beta}(\mathbb H_n)\leq 1+\lambda E_{\beta}^{\mathbb H_n,\mathbb Z^d}\le 1+K\lambda.
   \end{equation}
The same reasoning, with \eqref{eq:reversed SL} applied to $S=B\in \mathcal B$ and $\Lambda=\mathbb Z^d$ yields
\begin{equation}
	\varphi_\beta(B)\leq 1+\lambda E_\beta^{B,\mathbb Z^d}\stackrel{\eqref{eq:bound error 2 wsaw}}\leq 1+K\lambda,
\end{equation}
which concludes the proof.
\end{proof}


\subsection{Control of the gradient}

Proposition \ref{prop:bound phi} implies an $\ell^1$-type bound on $G_\beta^{\mathbb H_n}$ which involves the quantity $\lambda$ and which is  better than \eqref{eq:H_beta}. The following regularity estimate, which will be the goal of this section, will later allow us to convert this $\ell^1$ bound into an improved $\ell^\infty$ bound.

\begin{Prop}[Regularity estimate at mesoscopic scales]\label{prop:regularity}
Fix $d>4$ and $C>\bfC_{\rm RW}$. For every $\eta>0$, there exists $\delta=\delta(\eta,d)\in(0,1/2)$ and $\lambda_0=\lambda_0(\eta,C,d)>0$ such that for every $\lambda<\lambda_0$, every $\beta<\beta^*(C,\lambda)$, every integer $n$ with $\lfloor \delta n\rfloor\geq 6$, every $\Lambda\supset \Lambda_{3n}$, and every $x\in \Lambda\setminus\Lambda_{3n}$, 
\begin{align}
\max\big\{|G_\beta^\Lambda(u,x)-G_\beta^\Lambda(v,x)|:u,v\in\Lambda_{\lfloor\delta n\rfloor}\cap 2\mathbb Z^d\big\}&\le \eta\max\big\{G_\beta^\Lambda(w,x):w\in \Lambda_{3n}\big\}.
\end{align}
\end{Prop}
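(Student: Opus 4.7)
The plan is to apply the Simon-Lieb inequalities of Lemma~\ref{Lem: Simon-Lieb WSAW} at a mesoscopic intermediate scale and then reduce the resulting ``exit gradient'' to a classical simple random walk regularity estimate, exploiting the smallness of $\lambda$.

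Fix an intermediate scale $N\asymp n$ (with $\delta n\ll N<n$) and set $S:=\Lambda_N$. Since $u,v\in S$ and $x\notin S$ (because $|x|>3n>N$), applying both \eqref{eq:SL} and \eqref{eq:reversed SL} to each $w\in\{u,v\}$ yields
\begin{equation*}
G_\beta^\Lambda(w,x)=\sum_{\substack{y\in S,\ z\in\Lambda\setminus S\\ y\sim z}} G_\beta^S(w,y)\beta G_\beta^\Lambda(z,x)+R(w),
\end{equation*}
with $|R(w)|\le \lambda K' M$, where $M:=\max_{w'\in\Lambda_{3n}}G_\beta^\Lambda(w',x)$ and $K'=K'(C,d)$ is obtained by adapting Lemma~\ref{lem:bound error} to a block $S$ and a general $\Lambda$ (using Proposition~\ref{prop:bound phi} and \eqref{eq: full plane estimate from half plane}). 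Subtracting and using $G_\beta^\Lambda(z,x)\le M$ for every $z$ in the sum (as $z\in\Lambda_{N+1}\subset\Lambda_{3n}$) gives
\begin{equation*}
|G_\beta^\Lambda(u,x)-G_\beta^\Lambda(v,x)|\le M\Delta+2\lambda K' M,\qquad \Delta:=\sum_{y,z}|G_\beta^S(u,y)-G_\beta^S(v,y)|\beta.
\end{equation*}

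The crucial step is to bound $\Delta$, which is, up to the factor $\beta$, the total-variation distance between the ``exit measures'' of the WSAW from $S$ starting at $u$ and at $v$. For the simple random walk analog ($\lambda=0$), a standard coupling argument (e.g.\ a coordinate-by-coordinate reflection coupling) gives $\Delta_{\mathrm{RW}}\le C_0|u-v|/N\le 2dC_0\delta n/N$; the parity condition $u,v\in 2\mathbb Z^d$ enters precisely here, ensuring that the two coupled walks can synchronize on the bipartite graph $\mathbb Z^d$. To transfer to the WSAW, one expands $\rho(\gamma)=\prod(1-\lambda\mathds{1}_{\gamma(s)=\gamma(t)})$ as a polynomial in $\lambda$ and writes $G_\beta^S(w,y)=G_{\mathrm{RW},\beta}^S(w,y)+\lambda\,\mathcal E(w,y)$; summing the residual $\sum_{y,z}|\mathcal E(w,y)|\beta$ and bounding it via Proposition~\ref{prop:bound phi} and the full-space estimate of Lemma~\ref{lem: full plane 2pt function out of halfplane one} yields $\Delta\le C_0\delta+C_1\lambda$ once $N\asymp n$ is chosen.

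Combining the estimates, $|G_\beta^\Lambda(u,x)-G_\beta^\Lambda(v,x)|\le (C_0\delta+C_1\lambda+2\lambda K')M$. One first fixes $\delta=\delta(\eta,d)\in(0,1/2)$ small enough that $C_0\delta\le \eta/2$, and then $\lambda_0(\eta,C,d)$ small enough that $(C_1+2K')\lambda_0\le \eta/2$, giving the claim. The main obstacle is precisely the $\ell^1$-level SRW-to-WSAW comparison of $G_\beta^S(\cdot,y)$ on the inner boundary of $S$: the naive triangle inequality gives only $\Delta\le 2(1+K\lambda)$ (through the block bound $\varphi_\beta(S-u)\le 1+K\lambda$), so real cancellation must be exhibited. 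The perturbative $\lambda$-expansion combined with the SRW coupling is the cleanest route, though care is needed to prevent the signed residual from accumulating factors of the box size.
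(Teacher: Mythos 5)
Your scheme --- decompose at an intermediate box $S=\Lambda_N$ with $N\asymp n$, write $G^S_\beta=G^S_{\rm RW,\beta}+\lambda\,\mathcal{E}$, bound the SRW part via a reflection coupling and then bound the residual $\sum_{y,z}|\mathcal{E}(w,y)|\beta$ by an absolute constant --- has a genuine gap in the last step. Since $\rho(\gamma)\le 1$, one has $G^S_\beta(w,y)\le G^S_{{\rm RW},\beta}(w,y)$, whence
\begin{equation*}
\sum_{\substack{y\in S,\,z\notin S\\ y\sim z}}|\mathcal{E}(w,y)|\beta \;=\; \frac{1}{\lambda}\Bigl(\varphi_{{\rm RW},\beta}(S-w)-\varphi_{\beta}(S-w)\Bigr),
\end{equation*}
where $\varphi_{{\rm RW},\beta}$ is the analogue of $\varphi_\beta$ at $\lambda=0$. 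Take $\beta=\tfrac1{2d}$ (which is always $\le\beta^*$): then $\varphi_{{\rm RW},\beta}(\Lambda_N)=1$ by gambler's ruin, whereas for any fixed $\lambda>0$ one has $\varphi_\beta(\Lambda_N)\to 0$ as $N\to\infty$ (the typical exit time from $\Lambda_N$ is $\asymp N^2$, by transience such a walk has $\asymp N^2$ self-intersection pairs, and $(1-\lambda)^{\asymp N^2}\to 0$). Consequently the residual is of order $1/\lambda$, not $O(1)$, so the asserted estimate $\Delta\le C_0\delta+C_1\lambda$ fails uniformly in $n$. The a priori inputs available (Proposition~\ref{prop:bound phi}, Lemma~\ref{lem: full plane 2pt function out of halfplane one}) only give the upper bound $\varphi_\beta(\Lambda_N)\le 1+K\lambda$; there is no matching lower bound $1-O(\lambda)$ uniform in $N$, and the ``signed residual accumulating factors of the box size'' you flagged at the end is not a technicality but the obstruction that breaks the argument.

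The paper's proof avoids any SRW-to-WSAW comparison by running the reflection directly on the weakly self-avoiding walk. After reducing (by telescoping along coordinates, then rotating and translating) to $u=k\mathbf{e}_1$ and $v=-k\mathbf{e}_1$ --- mirror images across the coordinate hyperplane $H$, which is exactly where the parity assumption $u,v\in2\mathbb{Z}^d$ is used --- one applies the \emph{upper} Simon--Lieb inequality \eqref{eq:SL} at $A=\Lambda_n^+$ for $u$ and the \emph{reversed} inequality \eqref{eq:reversed SL} at the mirror box $B=-\Lambda_n^+$ for $v$, and subtracts. The exit contributions with $z\in H$ cancel exactly by the reflection symmetry of $(A,u)$ and $(B,v)$; what remains is the exit through $\partial\Lambda_n^+\setminus H$, which Lemma~\ref{lem:estimate half-space} (proved by iterating the block bound \eqref{eq:bound varphi box wsaw} at dyadically shrinking scales) bounds by $4(2\delta)^c$, because a walk started near $H$ is overwhelmingly likely to hit $H$ before escaping outward. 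The reversed inequality is applied \emph{once}, so the interaction cost is a single $K\lambda$ via Lemma~\ref{lem:bound error}, not a $\lambda$-error accumulated over $\asymp N^2$ steps. Your instinct that a reflection coupling is the mechanism is right; the missing idea is that the reflection can be exploited exactly at the level of the WSAW via the paired Simon--Lieb inequalities, so there is no need to ever pass to the free random walk in this part of the argument.
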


\begin{Rem}
We will see later that the assumption $u,v\in 2\mathbb Z^d$ is not necessary (see Corollary \ref{cor: improved reg parity-wise}), but we will not need this stronger result in this section.
\end{Rem}
We start with a lemma. Let $\Lambda_n^+:=\{x\in \Lambda_n:x_1>0\}$ and $H:=\{v \in \mathbb Z^d: \: v_1=0\}$.
\begin{Lem}\label{lem:estimate half-space}
Fix $d>4$ and $C>\bfC_{\rm RW}$. Assume that $\lambda\leq \tfrac{1}{2dK}$, where $K=K(C,d)$ is given by Proposition \textup{\ref{prop:bound phi}}. Let $n\geq 12$. For every $v\in \Lambda_k^+$ with $6\le k\le n/2$ and every $\beta<\beta^*(C,\lambda)$, 
\begin{equation}
\sum_{\substack{y \in \Lambda_n^+\\ z\notin \Lambda_n^+\cup H\\ y\sim z}}G_\beta^{\Lambda_n^+}(v,y)\beta \le 4 \Big(\frac {2k}n\Big)^c,
\end{equation}
where $c:=\tfrac{|\log (1-\tfrac{1}{4d^2})|}{2\log 2}$.
\end{Lem}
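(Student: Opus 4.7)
The plan is a doubling iteration on
\begin{equation*}
Q(r) := \sup_{v \in \Lambda_r^+} \sum_{\substack{y \in \Lambda_n^+ \\ z \notin \Lambda_n^+ \cup H \\ y \sim z}} G_\beta^{\Lambda_n^+}(v, y)\,\beta,\qquad k \le r \le \tfrac{n}{2},
\end{equation*}
culminating in $Q(k) \le 4(2k/n)^c$. The base case is immediate from Proposition~\ref{prop:bound phi}: for $v \in \Lambda_{n/2}^+$ the translate $\Lambda_n^+ - v$ is a block in $\mathcal{B}$ (the first-coordinate interval $[1-v_1,n-v_1]$ contains $0$ because $v_1 \ge 1$), so $Q(n/2) \le \varphi_\beta(\Lambda_n^+-v) \le 1+K\lambda \le 2$ whenever $\lambda \le 1/(2dK)$.

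For the recursion, at a scale $k \le r < n/2$ I apply Lemma~\ref{Lem: Simon-Lieb WSAW} with $\Lambda = \Lambda_n^+$ and $S = \Lambda_{2r}^+$ (after translating by $-v$, which is legitimate since $v \in \Lambda_r^+ \subset S$). Summing over the target pairs $(y,z)$, the term $G_\beta^S(v,y)$ drops because such $y$ lies on $\partial\Lambda_n^+$ and thus outside $\Lambda_{2r}^+$ (using $2r < n$); this yields
\begin{equation*}
Q(r) \le \sup_{v \in \Lambda_r^+} \Sigma(v, \Lambda_{2r}^+) \cdot Q(2r+1), \qquad \Sigma(v, \Lambda_{2r}^+) := \!\!\!\sum_{\substack{y \in \Lambda_{2r}^+ \\ z \in \Lambda_n^+ \setminus \Lambda_{2r}^+ \\ y \sim z}} G_\beta^{\Lambda_{2r}^+}(v, y)\,\beta,
\end{equation*}
which is the ``escape sum'' out of $\Lambda_{2r}^+$ through its non-$H$ faces. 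The key subgoal becomes $\sup_{v \in \Lambda_r^+} \Sigma(v, \Lambda_{2r}^+) \le \alpha$ with $\alpha := \sqrt{1-1/(4d^2)} = 2^{-c}$. Granted this, iterating $J := \lceil \log_2(n/(2k)) \rceil$ times and closing with the base case produces $Q(k) \le 2\alpha^J \le 4(2k/n)^c$, as required.

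The escape estimate is the heart of the argument. Decomposing $\varphi_\beta(\Lambda_{2r}^+ - v) = \Sigma(v, \Lambda_{2r}^+) + T(v)$, with $T(v)$ denoting the contribution through the $H$-face of $\Lambda_{2r}^+$, and using Proposition~\ref{prop:bound phi} to bound the left-hand side by $1+K\lambda$, the task reduces to the \emph{uniform} lower bound $T(v) \ge 1+K\lambda - \alpha \gtrsim 1/(8d^2)$ for every $v \in \Lambda_r^+$. The arithmetic $1/(4d^2) = (1/(2d))^2$ is suggestive of a two-step random-walk mechanism: a fixed fraction of the $\varphi$-mass should be absorbed by $H$ via two consecutive steps toward $H$, each costing at least $\beta \ge 1/(2d)$. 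I would attempt to formalise this by invoking Simon--Lieb once more in a small region around $v$, then exploiting reflection symmetry across $H$ together with Proposition~\ref{prop:bound phi} applied to a block straddling both $v$ and its reflection. The main obstacle is producing a bound that is genuinely uniform over $v \in \Lambda_r^+$ --- in particular does not deteriorate as $v_1$ ranges over $[1,r]$ --- which is why the explicit constant $1/(4d^2)$, depending only on $d$, appears as the fundamental gain per doubling.
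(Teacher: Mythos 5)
The high-level doubling scheme you set up matches the paper's induction in outline, and your base case and recursion (applying Simon--Lieb with $\Lambda=\Lambda_n^+$ and noting $G_\beta^{\Lambda_{2r}^+}(v,\cdot)$ vanishes on $\partial\Lambda_n^+$) are sound. But the entire weight of the proof rests on the escape estimate $\sup_{v\in\Lambda_r^+}\Sigma(v,\Lambda_{2r}^+)\le\sqrt{1-1/(4d^2)}$, and this you do not prove --- you only gesture at a possible strategy and then explicitly flag the obstacle yourself (uniformity in $v$). That is a genuine gap, not a detail.

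Moreover, the route you sketch is unlikely to close it. The fixed box $\Lambda_{2r}^+$ is not symmetric about the starting point $v$, so reflection symmetry alone gives you nothing about the fraction of $\varphi_\beta(\Lambda_{2r}^+-v)$ carried by the $H$-face; one would need a genuine Harnack-type random-walk estimate for the interacting walk, which is exactly the kind of input the paper is careful \emph{not} to require at this stage (such estimates only appear much later, in the lower-bound part of the argument, via Proposition~\ref{prop:regularity2}). The paper's resolution is a specific geometric device you missed: in the Simon--Lieb step it takes $S$ to be the \emph{$v$-centered} cube $B:=\Lambda_{v_1-1}(v)$, chosen so that $B$ is tangent to the hyperplane $H$. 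Because $B$ is a cube centered exactly at the source $v$, its full $2d$-fold symmetry forces each of the $2d$ faces to carry \emph{exactly} a fraction $1/(2d)$ of $\varphi_\beta(B)$, so the non-$H$ escape is exactly $\tfrac{2d-1}{2d}\varphi_\beta(B)\le(1-\tfrac1{2d})(1+\tfrac1{2d})=1-\tfrac1{4d^2}$, uniformly in $v$ and with no random-walk input whatsoever. Combined with the halving recursion $n_{\ell+1}=\lfloor(n_\ell-1)/2\rfloor$, this gives the factor $(1-1/(4d^2))^\ell$ directly; the further factor-of-two loss in the exponent (hence $\sqrt{1-1/(4d^2)}$ per nominal doubling, matching your $\alpha$) comes only from the $-6$ slack in bounding $n_\ell$ from below, not from any weakening of the escape estimate. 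In short: keep your scaffolding, but replace the fixed $S=\Lambda_{2r}^+$ by $S=\Lambda_{v_1-1}(v)$ and the uniformity problem disappears by symmetry.
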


\begin{proof}
Define $(n_\ell)$ by $n_0:=n$ and then $n_{\ell+1}:=\lfloor (n_\ell-1)/2\rfloor$. We prove by induction that for every $\ell\ge0$ and $v\in \Lambda_{n_\ell}^+$,
\begin{equation}\label{eq: induction}
\sum_{\substack{y \in \Lambda_n^+\\ z\notin \Lambda_n^+\cup H\\ y\sim z}}G_\beta^{\Lambda_n^+}(v,y)\beta \le \Big(1-\frac1{4d^2}\Big)^{\ell}\Big(1+\frac{1}{2d}\Big).
\end{equation}
The case $\ell=0$ follows from Proposition~\ref{prop:bound phi} and from the assumption made on $\lambda$.  Let us transfer the estimate from $\ell$ to $\ell+1$. Fix $v\in \Lambda_{n_{\ell+1}}^+$. Let $B:=\Lambda_{v_1-1}(v)$, which is included in $\Lambda_{n_\ell}^+$ and has one of its faces at distance $1$ from $H$. By symmetry, we have that 
\begin{equation}\label{eq:j1}
\sum_{\substack{r \in B\\ s\notin B\cup H\\ r\sim s}}G_\beta^{B}(v,r)\beta \le \frac{2d-1}{2d}\varphi_\beta(B)\stackrel{\eqref{eq:bound varphi box wsaw}}\le \Big(1-\frac1{2d}\Big)(1+\lambda K)\leq\Big(1-\frac1{2d}\Big)\Big(1+\frac{1}{2d}\Big) =1-\frac{1}{4d^2}.
\end{equation}
Lemma~\ref{Lem: Simon-Lieb WSAW} (applied to $S=B$ and $\Lambda=\Lambda_n^+$) and the induction hypothesis imply that 
\begin{align}
\sum_{\substack{y \in \Lambda_n^+\\ z\notin \Lambda_n^+\cup H\\ y\sim z}}G_\beta^{\Lambda_n^+}(v,y)\beta&\stackrel{\phantom{\eqref{eq:j1}}}\le \sum_{\substack{y \in \Lambda_n^+\\ z\notin \Lambda_n^+\cup H\\ y\sim z}}\Big(\sum_{\substack{r \in B\\ s\notin B\cup H\\ r\sim s}}G_\beta^{B}(v,r)\beta G_\beta^{\Lambda_n^+}(s,y)\Big)\beta \\
&\stackrel{\phantom{\eqref{eq:j1}}}=\sum_{\substack{r \in B\\ s\notin B\cup H\\ r\sim s}}G_\beta^{B}(v,r)\beta\Big(\sum_{\substack{y \in \Lambda_n^+\\ z\notin \Lambda_n^+\cup H\\ y\sim z}}G_\beta^{\Lambda_n^+}(s,y)\beta\Big) \\
&\stackrel{\phantom{\eqref{eq:j1}}}\le \Big(1-\frac1{4d^2}\Big)^{\ell}\Big(1+\frac{1}{2d}\Big)\sum_{\substack{r \in B\\ s\notin B\cup H\\ r\sim s}}G_\beta^{B}(v,r)\beta\\
&\stackrel{\eqref{eq:j1}}\le \Big(1-\frac1{4d^2}\Big)^{\ell+1}\Big(1+\frac{1}{2d}\Big).
\end{align}
This concludes the proof of the induction. 
Now, if $k\leq n/2$, one has\footnote{Indeed, by definition $n_\ell\geq \tfrac{n}{2^\ell}-\sum_{k=0}^{\ell-1}\tfrac{3}{2^k}\ge \tfrac{n}{2^\ell}-6$ for all $\ell\geq 0$. Hence, for $\ell=\lfloor \alpha/2\rfloor$ with $\alpha=\log_2(\tfrac{n}{2k})$,
\begin{equation*}
	n_{\ell}\geq \frac{n}{2^{\alpha/2}}-6=\sqrt{2k}\sqrt{n}-6\geq 2k-6\geq k,
\end{equation*}
where we used that $n\geq 2k$ and $k\geq 6$.} $k\leq n_{\lfloor\alpha/2\rfloor}$ where $\alpha=\log_2(\tfrac{n}{2k})$. Hence, by \eqref{eq: induction}, if $v\in \Lambda_k^+$,
\begin{equation}
	\sum_{\substack{y \in \Lambda_n^+\\ z\notin \Lambda_n^+\cup H\\ y\sim z}}G_\beta^{\Lambda_n^+}(v,y)\beta \le\Big(1-\frac{1}{4d^2}\Big)^{\lfloor\tfrac{1}{2}\log_2(\tfrac{n}{2k})\rfloor}\Big(1+\frac{1}{2d}\Big)\leq \Big(\frac{2k}{n}\Big)^{c}\Big(1+\frac{1}{2d}\Big)^2\leq 4\Big(\frac{2k}{n}\Big)^c,
\end{equation}
where $c=\tfrac{|\log (1-\tfrac{1}{4d^2})|}{2\log 2}$, and where we used that $(1-\tfrac{1}{4d^2})^{-1}\leq 1+\tfrac{1}{2d}\leq 2$ for $d\geq 1$.
\end{proof}

\begin{proof}[Proof of Proposition~\textup{\ref{prop:regularity}}]
It suffices to prove the statement when $u$ and $v$ differ in one coordinate only as the general case follows by summing increments over coordinates. By rotating and translating, we may consider $u=k{\bf e}_1$ and $v=-k{\bf e}_1$ belong to $\Lambda_{\lfloor\delta n \rfloor}\cap (2\mathbb Z^d)$ for $\delta$ to be fixed, and later replace the maximum in $\Lambda_{2n}$ by the maximum in $\Lambda_{3n}\supset \Lambda_{2n+\lfloor \delta n\rfloor}$. The restriction to $(2\mathbb Z^d)$ is technical and comes from the fact that we want $u$ to be the reflection of $v$ with respect to a hyperplane $H\subset \mathbb Z^d$.

Consider the sets $A:=\Lambda_n^+$ and $B:=-\Lambda_n^+$. Applying Lemma~\ref{Lem: Simon-Lieb WSAW} twice as well as Lemma~\ref{lem:bound error} gives
\begin{align}
G_\beta^\Lambda(u,x)&\le \sum_{\substack{y \in A\\ z\notin A\\ y\sim z}}G_\beta^A(u,y)\beta G_\beta^\Lambda(z,x),\\
G_\beta^\Lambda(v,x)&\ge \sum_{\substack{y \in B\\ z\notin B\\ y\sim z}}G_\beta^B(v,y)\beta G_\beta^\Lambda(z,x) -K\lambda\max\{G_\beta^\Lambda(w,x):w\in B\}\label{eq: second inequality first regularity wsaw},
\end{align}
where in \eqref{eq: second inequality first regularity wsaw} we used that 
\begin{align}
	\sum_{u\in B}E^{B,\Lambda}_\beta(u)G_\beta^{\Lambda}(u,x)&\stackrel{\phantom{\eqref{eq:bound error 2 wsaw}}}\leq E_\beta^{B,\Lambda}\cdot\max\{G_\beta^{\Lambda}(w,x):w\in B\}\\&\stackrel{\eqref{eq:bound error 2 wsaw}}\leq K \max\{G_\beta^{\Lambda}(w,x):w\in B\}.
\end{align}
We take the difference and use that when $z\in H$, the corresponding terms in the two sums cancel each other, see Figure \ref{fig:reg}. Assume that $\lambda\leq \tfrac{1}{2dK}$ and $\lfloor \delta n\rfloor\geq 6$. Lemma~\ref{lem:estimate half-space} applied to $k=\lfloor \delta n\rfloor$ and $n$ gives \begin{align}
G_\beta^\Lambda(u,x)-G_\beta^\Lambda(v,x)&\le \sum_{\substack{y \in A\\ z\notin A\cup H\\ y\sim z}}G_\beta^{A}(u,y)\beta G_\beta^\Lambda(z,x)+K\lambda \max\{G_\beta^\Lambda(w,x):w\in B\}\\
&\le \Big(4(2\delta)^c+K\lambda\Big)\max\{G_\beta^\Lambda(w,x):w\in\Lambda_{2n}\}.
\end{align}
The same bound holds if we replace $u$ and $v$. The proof follows by choosing $\delta=\delta(\eta,d)>0$ and $\lambda_0=\lambda_0(\eta,C,d)>0$ small enough. \end{proof}

%

\begin{figure}
	\begin{center}
		\includegraphics{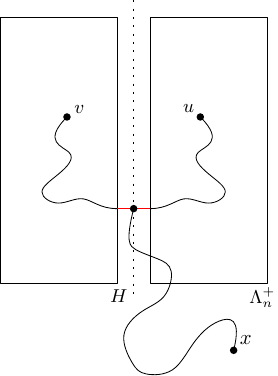}
		\caption{An illustration of the pairing used in the proof of Proposition~\ref{prop:regularity}. Since $u$ (resp. $v$) is close to $H$, a path started from $u$ will most likely touch $H$ if it exits $\Lambda_n^+$.}
		\label{fig:reg}
	\end{center}
\end{figure}

\subsection{Wrapping up the proof of Theorem~\ref{thm:mainwsaw}}\label{sec:2.3}

We start by showing how to use Proposition \ref{prop:regularity} to turn the improved $\ell^1$ estimate of $G^{\mathbb H_n}_\beta$ given by Proposition \ref{prop:bound phi} into an improved $\ell^\infty$ bound.
\begin{Prop}[Improving the bound on $G_\beta^{\mathbb H_n}$]\label{prop: improve l inf bound} Fix $d>4$. There exist $C,\lambda_0>0$ such that, for every $\lambda<\lambda_0$, and every $\beta<\beta^*(C,\lambda)$,
\begin{equation}
	G_{\beta}^{\mathbb H_n}(0,x)\leq \frac{C}{2(1\vee n)^{d-1}}\qquad \forall n\geq 0 ,\forall x\in \partial \mathbb H_n.
\end{equation}
\end{Prop}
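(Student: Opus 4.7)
The plan is to combine the improved $\ell^1$ estimate $\varphi_\beta(\mathbb H_n)\le 1+K\lambda$ from Proposition~\ref{prop:bound phi} with the slow-variation estimate of Proposition~\ref{prop:regularity} through an averaging argument on a mesoscopic transverse slab of $\partial\mathbb H_n$. The key observation is that transverse translations (perpendicular to $\mathbf{e}_1$) preserve the half-space $\mathbb H_n$, so that $G_\beta^{\mathbb H_n}(0,x+\Delta)=G_\beta^{\mathbb H_n}(-\Delta,x)$ when $\Delta_1=0$; by regularity, the right-hand side will be close to $G_\beta^{\mathbb H_n}(0,x)$. Summing over $\Delta$ in a mesoscopic slab---whose transverse volume is of order $n^{d-1}$---will convert the $\ell^1$ control of the full sum $\sum_{y\in\partial\mathbb H_n}G_\beta^{\mathbb H_n}(0,y)$ into the desired $n^{-(d-1)}$ pointwise decay.

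Concretely, I would fix $x\in\partial\mathbb H_n$ (for $n$ sufficiently large, the small-$n$ case being addressed separately below) and take $n':=\lfloor n/4\rfloor$, so that $\mathbb H_n\supset\Lambda_{3n'}$ and $x\in\mathbb H_n\setminus\Lambda_{3n'}$. For a small $\eta$ to be chosen, set $\delta=\delta(\eta,d)$ from Proposition~\ref{prop:regularity}, $m:=\lfloor\delta n'\rfloor$, and
\[
N:=\{\Delta\in 2\mathbb Z^d:\Delta_1=0,\;|\Delta|\le m\},
\]
so that $|N|\ge c_d\,(\delta n)^{d-1}$ for some $c_d>0$. Applying Proposition~\ref{prop:regularity} to the pair $(0,-\Delta)$ in $\Lambda_m\cap 2\mathbb Z^d$, combined with the transverse translation invariance, would give
\[
G_\beta^{\mathbb H_n}(0,x)\le G_\beta^{\mathbb H_n}(0,x+\Delta)+\eta\tilde M,\qquad \tilde M:=\max_{w\in\Lambda_{3n'}}G_\beta^{\mathbb H_n}(w,x).
\]
Summing over $\Delta\in N$, dividing by $|N|$, and noting that the $x+\Delta$ are distinct elements of $\partial\mathbb H_n$, I would obtain
\[
G_\beta^{\mathbb H_n}(0,x)\le\frac{1}{|N|}\sum_{y\in\partial\mathbb H_n}G_\beta^{\mathbb H_n}(0,y)+\eta\tilde M\le\frac{1+K\lambda}{\beta\,|N|}+\eta\tilde M,
\]
using $\sum_{y\in\partial\mathbb H_n}G_\beta^{\mathbb H_n}(0,y)=\varphi_\beta(\mathbb H_n)/\beta\le(1+K\lambda)/\beta$ from Proposition~\ref{prop:bound phi}.

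The term $\tilde M$ would be controlled via translation and $(\ell^\infty_\beta)$: for $w\in\Lambda_{3n'}\subset\Lambda_{3n/4}$ one has $G_\beta^{\mathbb H_n}(w,x)=G_\beta^{\mathbb H_{n+w_1}}(0,x-w)\le C/(n+w_1)^{d-1}\le 4^{d-1}C/n^{d-1}$. Combining (and using $\beta\ge 1/(2d)$, which is valid since $\beta^*\ge 1/(2d)$ provided $C>\bfC_{\rm RW}$), I would reach an estimate of the form $G_\beta^{\mathbb H_n}(0,x)\le\bigl[A_0(d,\delta,\lambda)+\eta\cdot 4^{d-1}C\bigr]/n^{d-1}$, with $A_0$ depending only on $d,\delta,\lambda$. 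Choosing $\eta$ so that $\eta\cdot 4^{d-1}\le 1/4$ would fix $\delta$ and $A_0$; taking $C\ge\max(4A_0,2\bfC_{\rm RW})$ and $\lambda_0$ small enough would then yield $G_\beta^{\mathbb H_n}(0,x)\le C/(2n^{d-1})$.

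The principal obstacle I anticipate is handling the small-$n$ regime, where the mesoscopic averaging degenerates. For $\beta\le 1/(2d)$, the bound would follow by direct domination by simple random walk ($G_\beta^{\mathbb H_n}(0,x)\le\bfC_{\rm RW}/(1\vee n)^{d-1}\le C/(2(1\vee n)^{d-1})$ with $C\ge 2\bfC_{\rm RW}$), while for $\beta\in(1/(2d),\beta^*)$ and $n$ below a $d$-dependent threshold one would argue directly, using the boundedness provided by \eqref{eq: full plane estimate from half plane} and the fact that $C/(2(1\vee n)^{d-1})$ becomes a large constant when $C$ is chosen sufficiently large. A secondary technicality is the parity constraint $\Delta\in 2\mathbb Z^d$ inherited from Proposition~\ref{prop:regularity}, which only costs a harmless factor of $2^{d-1}$ in $|N|$.
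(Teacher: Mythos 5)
Your approach is essentially the paper's: average the pointwise estimate over a transverse mesoscopic slab in $\{\Delta_1=0\}$ (your $N$, the paper's $V_n$), feed in the $\ell^1$ control $\varphi_\beta(\mathbb H_n)\le 1+K\lambda$ from Proposition~\ref{prop:bound phi}, and absorb the regularity error term using $(\ell^\infty_\beta)$; the paper invokes Proposition~\ref{prop:regularity} at scale $n/6$ while you use $n/4$, which is immaterial, and the order in which the parameters $\eta,\delta,C,\lambda_0$ are fixed is the same.

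The one step that fails as written is your treatment of the small-$n$ regime for $\beta\in(1/(2d),\beta^*)$. You invoke the bound from \eqref{eq: full plane estimate from half plane}, which yields $G_\beta^{\mathbb H_n}(0,x)\le 3C/(1\vee n)^{d-2}$; against the target $C/(2(1\vee n)^{d-1})$, the factor $C$ cancels on both sides, so ``taking $C$ large'' gives no room and the comparison is simply false for every $n\ge 0$. What is needed here is a $C$-\emph{independent} a priori bound. The paper's fix: since every $y\in\partial\mathbb H_n$ contributes a nonnegative term to $\varphi_\beta(\mathbb H_n)$, one has $G_\beta^{\mathbb H_n}(0,x)\le\varphi_\beta(\mathbb H_n)/\beta\le(1+\tfrac{1}{2d})/\beta\le 8d$ after reducing to $\beta\ge 1/(4d)$ by monotonicity. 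With this constant bound in hand, enlarging $C$ to cover $n$ below the threshold $N_0(\delta,d)$ works exactly as you intend. Your $\beta\le 1/(2d)$ branch (SRW domination) is fine.
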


\begin{proof} Let $\eta>0$ and $C>\bfC_{\rm RW}$ to be fixed. By monotonicity of $G_{\beta}^{\mathbb H_n}(0,x)$ in $\beta$, it suffices to prove the result for $\beta\in [\tfrac{1}{4d},\beta^*)$. Let $\delta=\delta(\eta,d)$ and $\lambda_0=\lambda_0(\eta,C,d)$ be provided by Proposition~\ref{prop:regularity}. Assume that $\tfrac{\delta n}{6}$ is an integer (otherwise simply round the number) and that $\tfrac{\delta n}{6}\geq 6$. Set \begin{equation}
V_n=V_n(\delta):=\Big\{x\in\Lambda_{\delta n/6}:x_1=0,\:x_j\text{ even for }j\ge2\Big\}.
\end{equation} 
Proposition~\ref{prop:regularity} applied to $n/6$ gives that for every $\beta<\beta^*$, every $x\in \partial\mathbb H_n$ and $y\in V_n$,
\begin{align}\label{eq:h1}
G_{\beta}^{\mathbb H_n}(0,x-y)&=G_{\beta}^{\mathbb H_n}(y,x)\\
&\ge G_{\beta}^{\mathbb H_n}(0,x)-\eta\max\{G_{\beta}^{\mathbb H_n}(v,x):v\in\Lambda_{n/2}\}.
\end{align}
Fix $x\in \partial \mathbb H_n$. Averaging the last displayed equation over $y\in V_n$ and using that $\beta\ge \tfrac1{4d}$ gives
\begin{align}\label{eq:h2}
\frac{8d}{|V_n|}\stackrel{\phantom{\eqref{eq:H_beta}}}\geq\frac{4d(1+(2d)^{-1})}{|V_n|}\stackrel{\eqref{eq:H_beta}}\ge \frac{\varphi_{\beta}(\mathbb H_n)}{\beta{|V_n|}}\geq \frac{1}{|V_n|}\sum_{y\in V_n}G_{\beta}^{\mathbb H_n}(0,x-y)\ge G_{\beta}^{\mathbb H_n}(0,x)-\eta\cdot \frac{C}{(n/2)^{d-1}},
\end{align}
where in the last inequality we used \eqref{eq:H_beta-''} to argue that
\begin{equation}
	\max\{G_{\beta}^{\mathbb H_n}(v,x):v\in\Lambda_{n/2}\}\leq \frac{C}{(n/2)^{d-1}}.
\end{equation}
If $\eta$ is chosen so that $\eta\le 1/2^{d+1}$, then $C=C(\delta,d)$ is chosen large enough, we find that
\begin{equation}
\sup_{x\in \partial\mathbb H_n}G_{\beta}^{\mathbb H_n}(0,x)\leq\frac{C}{2n^{d-1}}
\end{equation}
provided $n$ is large enough (i.e.\ $\tfrac{\delta n}{6}\geq 6$). 

To treat the small values of $n$, we observe that for all $n\geq 0$, for all $x\in \partial \mathbb H_n$, $G_\beta^{\mathbb H_n}(0,x)\le \tfrac{1}{\beta}\varphi_\beta(\mathbb H_n)\le \tfrac{1}{\beta}(1+\tfrac{1}{2d})\leq 8d$. Thus, if we additionally require that $C\geq 16d(36\delta^{-1})^{d-1}$, we ensure that for all $0\leq n\leq 36\delta^{-1}$,
\begin{equation}
	\sup_{x\in \partial\mathbb H_n}G^{\mathbb H_n}(0,x)\leq \frac{C}{2(1\vee n)^{d-1}}.
\end{equation}
This concludes the proof.
\end{proof}
\begin{Prop}\label{prop: theorem beta start = beta_c}
Fix $d>4$. There exist $C,\lambda_0>0$, and $K=K(C,d)>0$ such that for every $\lambda<\lambda_0$,
\begin{align}
G_{\beta_c}(0,x)&\le \frac{3C}{(1\vee|x|)^{d-2}}&\forall x\in \mathbb Z^d,\label{eq:upper bound below L}\\
G_{\beta_c}^{\mathbb H}(0,x)&\le \frac{C}{(1\vee|x_1|)^{d-1}}&\forall x\in \mathbb H
\label{eq:upper half-spacebound below L},\\
\label{eq: bound phi_n final prop}
\varphi_{\beta_c}(\mathbb H_n)&\le 1+K\lambda &\forall n\ge0,\\
\sup\{\varphi_{\beta_c}(B):B\in \mathcal B\}&\le 1+K\lambda, &\\
E^{\mathbb H_n,\mathbb Z^d}_{\beta_c}&\leq K &\forall n\geq 0,
\\
\sup\big\{E^{B,\mathbb Z^d}_{\beta_c}:B\in \mathcal B\big\}&\leq K. &\label{eq:general bound error last prop}
\end{align} 
\end{Prop}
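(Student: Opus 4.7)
The plan is twofold: first, close the bootstrap by proving that $\beta^*(C,\lambda)=\beta_c$ for a suitable choice of $(C,\lambda_0)$; second, transfer all estimates already established below $\beta^*$ to $\beta_c$ by monotone convergence as $\beta\nearrow\beta_c$.

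I would begin by fixing constants. Take $C>\bfC_{\rm RW}$ as in Proposition~\ref{prop: improve l inf bound} and let $K=K(C,d)$ be a common constant valid for both Proposition~\ref{prop:bound phi} and Lemma~\ref{lem:bound error} (take the maximum if necessary). Then pick $\lambda_0>0$ small enough that (i)~$\lambda_0$ is admissible in Propositions~\ref{prop:regularity} and~\ref{prop: improve l inf bound}, and (ii)~$K\lambda_0<\tfrac{1}{2d}$. Fix $\lambda\in(0,\lambda_0)$ and write $\beta^*=\beta^*(C,\lambda)$; since $C>\bfC_{\rm RW}$, we already have $\beta^*\ge \tfrac{1}{2d}$.

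The heart of the argument is a contradiction establishing $\beta^*=\beta_c$. Suppose $\beta^*<\beta_c$. For every $\beta<\beta^*$, combining Propositions~\ref{prop:bound phi} and~\ref{prop: improve l inf bound} gives, uniformly in $n\ge 0$ and $x\in\partial\mathbb H_n$,
\begin{equation*}
    \varphi_\beta(\mathbb H_n)\le 1+K\lambda<1+\tfrac{1}{2d}\qquad\text{and}\qquad G_\beta^{\mathbb H_n}(0,x)\le \tfrac{C}{2(1\vee n)^{d-1}},
\end{equation*}
so both defining conditions \eqref{eq:H_beta} and \eqref{eq:H_beta-''} hold with a definite multiplicative slack below their thresholds. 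Since $\beta^*<\beta_c$ implies $\chi(\beta^*)<\infty$, each of the maps $\beta\mapsto\varphi_\beta(\mathbb H_n)$ and $\beta\mapsto G_\beta^{\mathbb H_n}(0,x)$ is continuous at $\beta^*$ for any fixed $n,x$ (by dominated convergence on the defining walk sums). Choosing $\beta_0\in(\beta^*,\beta_c)$ and invoking the subcritical exponential decay of the two-point function (which ensures $\sup_{\beta\le\beta_0}\varphi_\beta(\mathbb H_n)\to 0$ and $\sup_{\beta\le\beta_0}\sup_{x\in\partial \mathbb H_n}(1\vee n)^{d-1}G_\beta^{\mathbb H_n}(0,x)\to 0$ as $n\to\infty$) reduces the verification of the two defining conditions to finitely many values of $n$. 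Pointwise continuity together with the multiplicative slack above then produces some $\beta'\in(\beta^*,\beta_c)$ at which the strict defining conditions still hold, contradicting the maximality of $\beta^*$.

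With $\beta^*=\beta_c$ in hand, the claimed bounds follow by passing to the monotone limit $\beta\nearrow\beta_c$: \eqref{eq: bound phi_n final prop} and its block variant, as well as the two inequalities on the error amplitude culminating in \eqref{eq:general bound error last prop}, are immediate consequences of Proposition~\ref{prop:bound phi} and Lemma~\ref{lem:bound error}; \eqref{eq:upper bound below L} comes from the same limiting procedure applied to \eqref{eq: full plane estimate from half plane} of Lemma~\ref{lem: full plane 2pt function out of halfplane one}; and \eqref{eq:upper half-spacebound below L} is the monotone limit of the defining condition $(\ell^\infty_\beta)$ itself. The main technical delicacy is the contradiction step: propagating the strict defining inequalities past $\beta^*$ requires not only pointwise continuity in $\beta$ but also uniform control in $n$ and $x$, and it is precisely the multiplicative slack ($K\lambda_0<\tfrac{1}{2d}$ on the $\ell^1$ side and the factor $\tfrac{1}{2}$ on the $\ell^\infty$ side) combined with the subcritical exponential decay that secures this uniformity.
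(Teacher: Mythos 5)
Your strategy---closing the bootstrap by contradiction to show $\beta^*=\beta_c$, then transferring the subcritical bounds to $\beta_c$ by monotone convergence---is exactly the paper's approach, and most of the execution matches: the choice of constants, the multiplicative slack below $\beta^*$ coming from Propositions~\ref{prop:bound phi} and~\ref{prop: improve l inf bound}, the use of subcritical exponential decay, and the final limiting step are all in place.

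The one step that does not quite close as written is the reduction to finitely many conditions in the contradiction argument. You observe that $\sup_{\beta\le\beta_0}\sup_{x\in\partial\mathbb H_n}(1\vee n)^{d-1}G^{\mathbb H_n}_\beta(0,x)\to 0$ as $n\to\infty$ and conclude that this ``reduces the verification \dots\ to finitely many values of $n$.'' That handles the tail in $n$, but for each remaining small $n<N$ the boundary $\partial\mathbb H_n$ is still infinite, so the $\ell^\infty$-type condition remains an infinite family of inequalities, and pointwise continuity of $\beta\mapsto G^{\mathbb H_n}_\beta(0,x)$ cannot, on its own, propagate a strict inequality past $\beta^*$ when infinitely many constraints remain. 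One also needs exponential decay in $|x|$ at fixed small $n$: for any $\beta\le\beta^{**}<\beta_c$ one has $G^{\mathbb H_n}_\beta(0,x)\le C(\beta^{**})e^{-c(\beta^{**})|x|}$, so the target bound $C/(1\vee n)^{d-1}$ is automatic once $|x|\ge N$, simultaneously for every $n\ge 0$. Only after this further reduction is the verification confined to a genuinely finite family---the values $0\le n\le N-1$ for the $\ell^1$ condition and the pairs $(n,x)$ with $x\in\Lambda_N\cap\partial\mathbb H_n$ for the $\ell^\infty$ condition---at which point continuity finishes the argument. This is precisely how the paper's proof reduces to a finite set of constraints. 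The fix is entirely in the spirit of what you wrote, but as stated your reduction leaves infinitely many conditions and the pointwise-continuity conclusion would not follow.
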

\begin{proof} We let $C,\lambda_0$ be given by Proposition \ref{prop: improve l inf bound}. This choice gives (still by Proposition \ref{prop: improve l inf bound}) that for every $\lambda<\lambda_0$ and every $\beta<\beta^*(C,\lambda)$,
\begin{equation}\label{eq:proof improved bound below beta*}
	G_\beta^{\mathbb H_n}(0,x)\leq \frac{C}{2(1\vee n)^{d-1}} \qquad \forall n\geq 0, \forall x\in \partial \mathbb H_n.
\end{equation}
Let $K=K(C,d)$ be given by Proposition \ref{prop:bound phi}.
We potentially decrease the value of $\lambda_0$ (which does not affect the value of $C$) and require that $K\lambda_0<\tfrac{1}{4d}$. Proposition \ref{prop:bound phi} implies that for every $\lambda<\lambda_0$ and every $\beta<\beta^*(C,\lambda)$,
\begin{equation}\label{eq:proof improved bound below beta* l1}
	\varphi_\beta(\mathbb H_n)\leq 1+K\lambda\leq 1+\frac{1}{4d} \qquad \forall n\geq 0.
\end{equation}
The monotone convergence theorem implies that \eqref{eq:proof improved bound below beta*}--\eqref{eq:proof improved bound below beta* l1} still hold at $\beta^*$. We will use \eqref{eq:proof improved bound below beta*}--\eqref{eq:proof improved bound below beta* l1}  to prove that for this choice of $C,\lambda_0$: for all $\lambda<\lambda_0$,
\begin{equation}
	\beta^*=\beta^*(C,\lambda)=\beta_c.
\end{equation}

Let $\lambda<\lambda_0$. Assume by contradiction that $\beta^*=\beta^*(C,\lambda)<\beta_c$. Exponential decay of the two-point function below $\beta_c$ implies that for every $\beta<\beta_c$, there exists $c(\beta),C(\beta)>0$ such that for all $n\geq 0$, for all $x\in \partial \mathbb H_n$,
\begin{equation}\label{eq: exponential decay wsaw}
	G^{\mathbb H_n}_\beta(0,x)\leq C(\beta)e^{-c(\beta)|x|}.
\end{equation}
Let $\beta^{**}$ be any number in $(\beta^*,\beta_c)$. By monotonicity of all the quantities in $\beta$ and \eqref{eq: exponential decay wsaw} applied at $\beta^{**}$, there exists $N\ge 0$ such that for every $\beta<\beta^{**}$, one has
\begin{equation}\label{eq: improved bound large n}
	\varphi_{\beta}(\mathbb H_n)<1+\frac{1}{2d}\quad \forall n\geq N, \qquad G_\beta^{\mathbb H_n}(0,x)<\frac{C}{(1\vee n)^{d-1}}\quad \forall n\geq 0, \forall x\in \partial \mathbb H_n\setminus \Lambda_N.
\end{equation}
Now, \eqref{eq: improved bound large n}, the validity of \eqref{eq:proof improved bound below beta*}--\eqref{eq:proof improved bound below beta* l1} at $\beta^*$, together with the continuity (below $\beta_c$) of the maps $\beta\mapsto \max\{\varphi_\beta(\mathbb H_n):0\leq n \leq N-1\}$ and $\beta\mapsto \max\{G_\beta^{\mathbb H_n}(0,x):x\in \Lambda_{N}\cap \partial \mathbb H_n, \: n\geq 1\}$ yield the existence of $\beta\in(\beta^*,\beta^{**})$ such that
\begin{equation}
	\varphi_\beta(\mathbb H_n)<1+\frac{1}{2d} \quad \forall n\geq 0, \qquad G_\beta^{\mathbb H_n}(0,x)<\frac{C}{(1\vee n)^{d-1}} \qquad \forall n\geq 0, \forall x\in \partial \mathbb H_n.
\end{equation}
This clearly contradicts the definition of $\beta^*$ and therefore implies that $\beta^*=\beta_c$.

For the proofs of \eqref{eq:upper bound below L}--\eqref{eq:general bound error last prop}, we use Lemmata \ref{lem: full plane 2pt function out of halfplane one}--\ref{lem:bound error} and Proposition \ref{prop:bound phi} to show that the bounds hold for $\beta<\beta_c$. We extend these bounds at $\beta_c$ using once again the monotone convergence theorem.
\end{proof}

\begin{Rem}\label{rem: beta_c}
The bound $2d\beta_c=\varphi_{\beta_c}(\{0\})\le1+K\lambda$ gives $\beta_c\le \tfrac1{2d}(1+K\lambda)$, which complements the bound $\beta_c\geq\tfrac1{2d}$.
\end{Rem}
We are now in a position to prove Theorem~\ref{thm:mainwsaw}.
\begin{proof}[Proof of Theorem~\textup{\ref{thm:mainwsaw}}] Let $C,\lambda_0$ be given by Proposition \ref{prop: theorem beta start = beta_c} and let $\lambda<\lambda_0$. Let $A=A(d)\geq 1$ be such that 
\begin{equation}\label{eq:def A wsaw}
t^{d-1}\leq Ae^{t}\qquad   \forall t\geq 1.
\end{equation}
\paragraph{Proof of \eqref{eq:bound full plane}.}
First, if $\beta\leq \beta_c$,
Proposition \ref{prop: theorem beta start = beta_c} implies that for $x\in \mathbb Z^d$ with $|x|\le L_\beta$,
\begin{equation}\label{eq: proof nearcritical full space 0 wsaw}
	G_\beta(0,x)\leq \frac{3C}{(1\vee|x|)^{d-2}}\leq \frac{3e^2C}{(1\vee|x|)^{d-2}}e^{-2|x|/L_\beta}.
\end{equation}
We now turn to the case of $\beta<\beta_c$ and $|x|>L_\beta$ (recall that $L_{\beta_c}=\infty$). Iterating \eqref{eq:SL} $k:=\lfloor |x|/(L_\beta+1)\rfloor-1$ times with $S$ being translates of $\Lambda_{L_\beta}$ and $\Lambda=\mathbb Z^d$, we get that\footnote{Recall that by definition of $L_\beta$, one has $\varphi_\beta(\Lambda_{L_\beta})\leq e^{-2}$.} 
\begin{align}\label{eq:iterate}
G_\beta(0,x)&\stackrel{\phantom{\eqref{eq:upper bound below L}}}\le \varphi_\beta(\Lambda_{L_\beta})^k \max\big\{G_\beta(y,x):y\notin\Lambda_{L_\beta}(x)\big\}\\&\stackrel{\eqref{eq:upper bound below L}}\leq e^{-2k}\frac{3C}{L_\beta^{d-2}}\stackrel{\phantom{\eqref{eq:upper bound below L}}}\leq \frac{3e^4C}{L_\beta^{d-2}}e^{-2|x|/L_\beta}
\stackrel{\eqref{eq:def A wsaw}}\leq \frac{3e^4AC}{|x|^{d-2}}e^{-|x|/L_\beta}.\label{eq: proof nearcritical full space wsaw}
\end{align}
\paragraph{Proof of \eqref{eq:bound half plane}.} Again, if $\beta\leq \beta_c$, Proposition \ref{prop: theorem beta start = beta_c} implies that for $x\in \mathbb H$ with $x_1\leq L_\beta$,
\begin{equation}\label{eq: proof nearcritical half space 0 wsaw}
	G_\beta^{\mathbb H}(0,x)\leq \frac{C}{(1\vee |x_1|)^{d-1}}\leq \frac{e^2C}{(1\vee|x_1|)^{d-1}}e^{-2|x_1|/L_\beta}.
\end{equation}
Turning to the case $\beta<\beta_c$ and $|x_1|>L_\beta$, iterating \eqref{eq:SL} $\ell:=\lfloor |x_1|/(L_\beta+1)\rfloor-1$ times with $S$ being translates of $\Lambda_{L_\beta}$ (starting with $\Lambda_{L_\beta}(x)$) and $\Lambda=\mathbb H$, we obtain
\begin{align}
	G_\beta^{\mathbb H}(0,x)=G_\beta^{\mathbb H}(x,0)\leq \varphi_\beta(\Lambda_{L_\beta})^{\ell}\max\{G_\beta^{\mathbb H}(0,y): y_1> L_\beta\}&\stackrel{\eqref{eq:upper half-spacebound below L}}\leq \frac{e^4C}{L_\beta^{d-1}}e^{-2|x_1|/L_\beta}
	\\&\stackrel{\eqref{eq:def A wsaw}}\leq \frac{e^4AC}{|x_1|^{d-1}}e^{-|x_1|/L_\beta} \label{eq: proof nearcritical half space wsaw}
\end{align}
Gathering \eqref{eq: proof nearcritical full space 0 wsaw}, \eqref{eq: proof nearcritical full space wsaw}, \eqref{eq: proof nearcritical half space 0 wsaw}, and \eqref{eq: proof nearcritical half space wsaw}, we obtained: for all $\beta\leq \beta_c$, for all $x\in \mathbb Z^d$,
\begin{align}
G_\beta(0,x)&\le \frac{3e^4AC}{(1\vee|x|)^{d-2}}\exp(-|x|/L_\beta),\\
G_\beta^\mathbb H(0,x)&\le \frac{3e^4AC}{(1\vee|x_1|)^{d-1}}\exp(-|x_1|/L_\beta).
\end{align}
This concludes the proof.
\end{proof}

\section{Proof of Theorem~\ref{thm:main2wsaw}}

For $\varepsilon\in (0,1)$, introduce the quantity
\begin{equation}
L_\beta(\varepsilon):=\inf\{k\ge1:\varphi_\beta(\Lambda_k)\le 1-\varepsilon\}.
\end{equation}
In particular, one has $L_\beta=L_\beta(1-e^{-2})$.
This new correlation length only differs from $L_\beta$ by a constant, as stated in the next lemma, but it will be more convenient for the rest of the proof. A similar result is derived in the context of (spread-out) Bernoulli percolation in our companion paper, see \cite[Lemma~4.3]{DumPan24Perco}.

\begin{Lem}[Comparison between $L_\beta$ and $L_\beta(\varepsilon)$]\label{lem: comparison of diff l} Fix $d>4$ and $\varepsilon\in(0,1-e^{-2})$. Let $\lambda_0$ be given by Proposition \textup{\ref{prop: theorem beta start = beta_c}}. There exists $C(\varepsilon,\lambda_0,d)\in (0,\infty)$ such that, for every $\lambda<\lambda_0$, and every $\beta<\beta_c$,

\begin{equation}
	L_\beta(\varepsilon)\leq L_\beta\leq C(\varepsilon,\lambda_0)L_\beta(\varepsilon).
\end{equation}
\end{Lem}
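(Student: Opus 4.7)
The inequality $L_\beta(\varepsilon)\le L_\beta$ is immediate since $1-\varepsilon > e^{-2}$: any $k$ with $\varphi_\beta(\Lambda_k)\le e^{-2}$ automatically satisfies $\varphi_\beta(\Lambda_k)\le 1-\varepsilon$, so $L_\beta(\varepsilon)\le L_\beta$. For the reverse bound, I set $k:=L_\beta(\varepsilon)$, so that $\varphi_\beta(\Lambda_k)\le 1-\varepsilon$, and aim to find a constant $m=m(\varepsilon,\lambda_0,d)$ with $\varphi_\beta(\Lambda_{mk})\le e^{-2}$, which directly yields $L_\beta\le mL_\beta(\varepsilon)$.

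The plan is to iterate the Simon-Lieb inequality \eqref{eq:SL} over translates of the ``good'' box $\Lambda_k$. Introduce the escape amplitude
\[
h(z):=\sum_{\substack{y\in \Lambda_{mk},\,z'\notin \Lambda_{mk}\\ y\sim z'}}G_\beta^{\Lambda_{mk}}(z,y)\beta,\qquad z\in \Lambda_{mk},
\]
so that $h(0)=\varphi_\beta(\Lambda_{mk})$. Whenever $|z|_\infty<(m-1)k$, applying Lemma \ref{Lem: Simon-Lieb WSAW} with $S=\Lambda_k(z)$ and $\Lambda=\Lambda_{mk}$ (the local term $G_\beta^{\Lambda_k(z)}(z,y)$ vanishes after summation since $\Lambda_k(z)$ does not meet $\partial \Lambda_{mk}$) and summing over the outward boundary edges of $\Lambda_{mk}$ yields
\[
h(z)\le \varphi_\beta(\Lambda_k)\max_{|z'-z|_\infty=k+1}h(z')\le (1-\varepsilon)\max h.
\]
Iterating $J:=\lfloor(m-1)/3\rfloor$ times from $z_0=0$ (a choice that preserves $J(k+1)<(m-1)k$ for every $k\ge 1$, hence the validity of each step) gives
\[
\varphi_\beta(\Lambda_{mk})\le (1-\varepsilon)^{J}\max_{|z|_\infty\le J(k+1)}h(z).
\]

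The crux is a $k$-independent bound on the terminal values $h(z)$, for which I invoke the sharper half-space estimate \eqref{eq:bound half plane}. For each of the $2d$ faces $F$ of $\Lambda_{mk}$, the inclusion $\Lambda_{mk}\subset H_F$ (with $H_F$ the half-space whose boundary contains $F$), combined with a translation-reflection-reversibility identity sending $H_F$ onto $\mathbb H$ and $y\in F$ onto $\partial \mathbb H$, yields
\[
G_\beta^{\Lambda_{mk}}(z,y)\le G_\beta^{H_F}(z,y)\le \frac{C}{(1\vee \mathrm{dist}(z,F))^{d-1}}\qquad\forall y\in F.
\]
Summing over the $O((mk)^{d-1})$ vertices of $F$ and over the $2d$ faces delivers $h(z)\le C_d(mk/\mathrm{dist}(z,\partial\Lambda_{mk}))^{d-1}$. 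A short arithmetic check shows that for every terminal $z$ with $|z|_\infty\le J(k+1)$ one has $\mathrm{dist}(z,\partial\Lambda_{mk})\ge k$, so $h(z)\le C_d m^{d-1}$. Combining, $\varphi_\beta(\Lambda_{mk})\le C_d m^{d-1}(1-\varepsilon)^{J}$, which is at most $e^{-2}$ as soon as $m\ge m_0(\varepsilon,d,\lambda_0)$ by the domination of exponential decay over polynomial growth.

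The main obstacle is precisely this terminal bound. Using only the full-space estimate \eqref{eq:bound full plane} (decay exponent $d-2$) would leave an extra $mk$ factor when summing over a $(d-1)$-dimensional face, forcing the iteration to be of length $\asymp \log k$ and yielding only the weaker $L_\beta=O(L_\beta(\varepsilon)\log L_\beta(\varepsilon))$. It is the exponent $d-1$ of the half-space bound, precisely matching the dimension of each face, that makes $h$ uniformly bounded in $k$ and delivers the clean linear comparison.
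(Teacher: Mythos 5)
Your proof is correct, and the skeleton is the same as the paper's: iterate the Simon--Lieb inequality \eqref{eq:SL} with blocks of side $L_\beta(\varepsilon)$ to gain a factor $(1-\varepsilon)$ per step, and trade off the resulting exponential against a terminal $\ell^1$ bound. Where you diverge is precisely in that terminal bound. The paper observes that after the iteration the leftover factor is $\max\{\varphi_\beta(\Lambda_{L_\beta-1}(x)):x\in\Lambda_{L_\beta-1}\}$, i.e.\ $\varphi_\beta$ of a box containing $0$, which Proposition \ref{prop: theorem beta start = beta_c} (the bound on $\sup\{\varphi_\beta(B):B\in\mathcal B\}$) controls uniformly by $1+K\lambda$; after that, one just chooses $k$ so that $(1-\varepsilon)^k(1+K\lambda_0)<e^{-2}$ and concludes by contradiction with the definition of $L_\beta$. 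You instead rebuild the terminal $\ell^1$ control from scratch by pushing $G^{\Lambda_{mk}}_\beta(z,\cdot)$ into the half-space $H_F$ for each face $F$, applying the pointwise half-space estimate \eqref{eq:bound half plane} and summing over the $O\big((mk)^{d-1}\big)$ vertices of $F$; the decay exponent $d-1$ exactly cancels the dimension of the face, so you get a $k$-independent bound. That works, and your closing remark about why the full-space exponent $d-2$ would fall one power short is accurate. Still, it is worth noting that the paper's route is both shorter and requires less input (it uses only the $\ell^1$-type bound \eqref{eq:bound varphi box wsaw}, not the $\ell^\infty$ half-space bound \eqref{eq:bound half plane}); also, your own estimate is actually stronger than you claim, since for a terminal $z$ one has $\mathrm{dist}(z,\partial\Lambda_{mk})\ge k(m+2)/3$, which gives $h(z)\le C_d\,3^{d-1}$, uniform in both $k$ and $m$ --- so the polynomial-vs-exponential tradeoff you describe at the end is not really needed.
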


\begin{proof} Observe that if $(\varepsilon,\varepsilon')\in (\mathbb R^+)^2$ with $\varepsilon\leq \varepsilon'$, then 
\begin{equation}\{k\geq 1: \varphi_\beta(\Lambda_k)\leq \varepsilon\}\subset \{k\geq 1: \varphi_\beta(\Lambda_k)\leq \varepsilon'\}.\end{equation}
 Hence, the map $\varepsilon\mapsto L_\beta(\varepsilon)$ is increasing. This observation, together with the fact that $L_\beta=L_\beta(1-e^{-2})$ and $\varepsilon \in (0,1-e^{-2})$, implies the first inequality. 

We turn to the second one.Let $k\geq 1$ to be chosen large enough. Assume by contradiction that $L_\beta>k(L_\beta(\varepsilon)+1)$. Similarly to \eqref{eq:iterate}, if we iterate \eqref{eq:SL} $k$ times with $S$ being translates of $\Lambda_{L_\beta(\varepsilon)}$ and $\Lambda=\Lambda_{L_\beta-1}$, we obtain that: for every $x\in \partial \Lambda_{L_\beta-1}$,
\begin{equation}
	G_\beta^{\Lambda}(0,x)\beta\leq \sum_{\substack{u_1\in \Lambda_{L_\beta(\varepsilon)}\\v_1\notin \Lambda_{L_\beta(\varepsilon)}\\ u_1\sim v_1}}G_\beta^{\Lambda_{L_\beta(\varepsilon)}}(0,u_1)\beta\ldots\sum_{\substack{u_k\in \Lambda_{L_\beta(\varepsilon)}(v_{k-1})\\v_k\notin \Lambda_{L_\beta(\varepsilon)}(v_{k-1})\\ u_k\sim v_k}}G_\beta^{\Lambda_{L_\beta(\varepsilon)}(v_{k-1})}(v_{k-1},u_k)\beta G_\beta^\Lambda(v_k,x)\beta.
	\end{equation}
Summing the above displayed equation over $x\in \partial\Lambda_{L_\beta-1}$ and $y\sim x$ with $y\notin \Lambda_{L_\beta-1}$ gives
\begin{equation}
\varphi_\beta(\Lambda_{L_\beta-1})\le \varphi_\beta(\Lambda_{L_\beta(\varepsilon)})^k\max\{\varphi_\beta(\Lambda_{L_\beta-1}(x)):x\in \Lambda_{L_\beta-1}\}\le (1-\varepsilon)^k(1+K\lambda),
\end{equation}
where $K$ is given by Proposition \ref{prop: theorem beta start = beta_c}. Choosing $k=k(\varepsilon,\lambda_0,d)$ large enough so that $(1-\varepsilon)^k(1+K\lambda_0)<e^{-2}$ contradicts the above display by definition of $L_\beta$. This implies that 
\begin{equation}L_\beta\le k(L_\beta(\varepsilon)+1)\leq 2kL_\beta(\varepsilon),\end{equation} where we used that $L_\beta(\varepsilon)\geq 1$.
We obtained that $L_\beta \le C(\varepsilon,\lambda_0,d) L_\beta(\varepsilon)$, where we set $C(\varepsilon,\lambda_0,d):=2k<\infty$. This concludes the proof.
\end{proof}
Let $n\geq 1$ and $\varepsilon\in(0,1-e^{-2})$. Introduce the set
\begin{equation}\label{eq: def An}
	A_n:=\{x\in \mathbb Z^d: x_1=|x|=n\}.
\end{equation}
The assumption $n<L_\beta(\varepsilon)$ implies an $\ell^1$-type lower bound on the half-space two-point function. Indeed, if $n< L_\beta(\varepsilon)$, one has
\begin{equation}\label{eq:averaged lower bound half-space}
	\frac{1}{|A_n|}\sum_{x\in A_n}G_\beta^{\mathbb H}(0,x)\geq \frac{1}{|A_n|}\sum_{x\in \partial \Lambda_n:\: x_1=n}G_\beta^{\Lambda_n}(0,x)\geq \frac{\varphi_\beta(\Lambda_n)}{2d\beta|A_n|}\geq \frac{(1-\varepsilon)}{2d\beta|A_n|}\geq \frac{c_0}{n^{d-1}},
\end{equation}
where $c_0=c_0(d)>0$. The core of this section will be to turn this averaged estimate into a point-wise estimate for the half-space two-point function. The corresponding lower bound for the full-space two-point function will follow thanks to Lemma \ref{Lem: Simon-Lieb WSAW}. The proof is organised in two steps: we begin by proving a regularity estimate, and then we use it to get the theorem.

\subsection{A Harnack-type estimate}

We start with another regularity estimate relating the minimum to the maximum of the two-point function in a domain.

\begin{Prop}[Harnack-type estimate at macroscopic scales]\label{prop:regularity2}
Fix $d>4$ and $\alpha>0$. There exists $C_{\rm RW}=C_{\rm RW}(\alpha,d)>0$ and for every $\eta>0$, there exist $\lambda_0=\lambda_0(\eta,\alpha,d)$, $\varepsilon_0=\varepsilon_0(\eta,d)>0$ small enough and $N_0=N_0(\eta,\alpha,d)$ large enough such that the following holds. For every $\lambda<\lambda_0$,  every $\varepsilon<\varepsilon_0$,  every $N_0\leq n\le 6L_\beta(\varepsilon)$,  every $\tfrac1{2d}\le\beta\le \beta_c$, every $\Lambda\supset \Lambda_{(1+\alpha)n}$, and every $x\in \Lambda\setminus \Lambda_{(1+\alpha)n}$,
\begin{align}
\max\{G_\beta^\Lambda(u,x):u\in \Lambda_n\}&\le C_{\rm RW}\min\{G_\beta^\Lambda(u,x):u\in \Lambda_n\}+\eta \max\{G_\beta^\Lambda(u,x):u\in \Lambda_{(1+\alpha)n}\}.\end{align}
\end{Prop}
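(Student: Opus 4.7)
The strategy is to combine the two Simon--Lieb inequalities of Lemma~\ref{Lem: Simon-Lieb WSAW} applied at the intermediate scale $S := \Lambda_{\lfloor(1+\alpha/2)n\rfloor}$ with a Harnack-type comparison of the WSAW escape kernel from $S$, the latter being close to its simple random walk analogue for small $\lambda$ and scales $n\lesssim L_\beta(\varepsilon)$.

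\emph{Step 1 (Sandwich via Simon--Lieb).} Fix $u,v\in \Lambda_n$ and $x\in \Lambda\setminus\Lambda_{(1+\alpha)n}$. Applying \eqref{eq:SL} at the base point $u$ and the reversed inequality \eqref{eq:reversed SL} at the base point $v$, together with the box bound on the error amplitude \eqref{eq:bound error 2 wsaw}, one obtains
\begin{align*}
G_\beta^\Lambda(u,x) &\le \sum_{\substack{y\in S,\,z\notin S\\ y\sim z}} G_\beta^S(u,y)\,\beta\,G_\beta^\Lambda(z,x),\\
G_\beta^\Lambda(v,x) &\ge \sum_{\substack{y\in S,\,z\notin S\\ y\sim z}} G_\beta^S(v,y)\,\beta\,G_\beta^\Lambda(z,x)\;-\;K\lambda\,M,
\end{align*}
where $M:=\max\{G_\beta^\Lambda(w,x):w\in\Lambda_{(1+\alpha)n}\}$ and $K=K(C,d)$ comes from Proposition~\ref{prop: theorem beta start = beta_c}.

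\emph{Step 2 (Harnack for the WSAW escape kernel).} The key analytic input is to show that there exists $C_{\rm RW}=C_{\rm RW}(\alpha,d)$ such that, for every $u,v\in\Lambda_n$ and every boundary pair $(y,z)$ of $S$,
\begin{equation*}
G_\beta^S(u,y)\;\le\;C_{\rm RW}\,G_\beta^S(v,y),
\end{equation*}
provided $\lambda<\lambda_0(\alpha,d)$, $\varepsilon<\varepsilon_0(d)$ and $n\ge N_0(\alpha,d)$. The strategy is to compare $G_\beta^S(\cdot,y)$ to the corresponding simple random walk escape kernel on $S$, for which a standard discrete Harnack inequality (for points at macroscopic distance $\asymp \alpha n$ from the boundary) provides a bounded ratio depending only on $\alpha$ and $d$. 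The WSAW kernel is then controlled perturbatively in $\lambda$ using the a priori bounds of Proposition~\ref{prop: theorem beta start = beta_c} together with the mesoscopic regularity of Proposition~\ref{prop:regularity} iterated along a chain of well-separated balls inside $S$. The hypothesis $n\le 6L_\beta(\varepsilon)$ guarantees that the walk does not feel the exponential decay on scales up to $n$, and the hypothesis $\beta\ge\tfrac1{2d}$ ensures a matching random walk lower bound is available.

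\emph{Step 3 (Conclusion).} Substituting Step 2 into the upper bound of Step 1 and then comparing with the lower bound yields
\begin{equation*}
G_\beta^\Lambda(u,x)\;\le\;C_{\rm RW}\,G_\beta^\Lambda(v,x)\;+\;C_{\rm RW}\,K\lambda\,M.
\end{equation*}
Choosing $u$ and $v$ to realise the maximum and minimum of $G_\beta^\Lambda(\cdot,x)$ over $\Lambda_n$ respectively, and taking $\lambda_0$ small enough so that $C_{\rm RW}K\lambda_0\le\eta$, gives the claim.

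\textbf{Main obstacle.} The heart of the argument is the Harnack estimate in Step~2. Unlike its simple random walk analogue, it cannot be derived from standard potential theory because of the self-avoidance weight, and mere chaining of the mesoscopic regularity from Proposition~\ref{prop:regularity} only produces an additive comparison. The delicate point is to obtain a genuinely multiplicative constant $C_{\rm RW}(\alpha,d)$ that is uniform in $\beta\in[\tfrac{1}{2d},\beta_c]$ and depends only on the geometric ratio $\alpha$, which forces the restrictions $n\le 6L_\beta(\varepsilon)$ and $\beta\ge\tfrac1{2d}$ and relies crucially on the finiteness of the bubble diagram (Corollary~\ref{cor: finiteness bubble}).
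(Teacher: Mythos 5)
Your Step~2 is the entire substance of the proposition and it is asserted, not proved. You want a genuinely multiplicative Harnack inequality
\begin{equation*}
G_\beta^S(u,y)\;\le\;C_{\rm RW}\,G_\beta^S(v,y)\qquad\text{uniformly over boundary pairs }(y,z)\text{ of }S,
\end{equation*}
for $S=\Lambda_{\lfloor(1+\alpha/2)n\rfloor}$ and $u,v\in\Lambda_n$. The WSAW restricted two-point function $G_\beta^S(\cdot,y)$ is not discretely harmonic (the weight $\rho$ is non-Markovian), so there is no potential-theoretic Harnack to invoke, and the ``perturbative comparison to the SRW escape kernel'' is not quantified anywhere: the available comparisons to the random walk Green function are one-sided (upper bound for $\beta\le\tfrac1{2d}$) and do \emph{not} give a two-sided ratio that is uniform in $n$ and over $\beta$ up to $\beta_c>\tfrac1{2d}$. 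You yourself point out in the ``Main obstacle'' paragraph that chaining Proposition~\ref{prop:regularity} produces only an \emph{additive} comparison — which is precisely why the estimate in your Step~2 does not follow from the tools you cite. (Also, Corollary~\ref{cor: finiteness bubble} plays no role here.)

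The paper resolves exactly this difficulty by \emph{not} applying Lemma~\ref{Lem: Simon-Lieb WSAW} once at the macroscopic scale $(1+\alpha/2)n$. Instead it applies it iteratively with $S$ a small translated box $\Lambda_{m-1}$, $m\approx \delta^{10/9}n$, until a stopping time $\tau\wedge T$. This rewrites $G_\beta^\Lambda(u,x)$ up to controlled error as $\mathbb E'_u[\varphi^{\tau\wedge T}G_\beta^\Lambda(X'_{\tau\wedge T},x)]$, where $X'$ is a genuine Markov chain with step distribution $\mu_{m,\beta}$ (eq.~\eqref{eq:step distribution non srw}). The Harnack inequality is then applied to this random walk (Proposition~\ref{prop:uniform Harnack}, imported from \cite{DumPan24Perco}), not to the WSAW kernel. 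The reversed Simon--Lieb inequality produces an additive error at every step, but the expected number of steps $\mathbb E'_v[\tau]$ is $O(\delta^{-20/9})$ uniformly in $n$ by \cite[Prop.~A.5]{DumPan24Perco}, so the accumulated error is $O(\lambda)$ and can be absorbed into the $\eta M$ term; the gradient term in Proposition~\ref{prop:uniform Harnack} is controlled by the mesoscopic regularity of Corollary~\ref{cor: improved reg parity-wise}. The hypotheses $\beta\ge\tfrac1{2d}$ and $n\le 6L_\beta(\varepsilon)$ are used to guarantee $\varphi_\beta(\Lambda_{m-1})\in[1-\varepsilon,1+\varepsilon]$, which is what makes the weights $\varphi^{\tau\wedge T}$ bounded. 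Your proposal skips the one genuinely new idea needed to make the argument work.
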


In what follows, we will use Proposition \ref{prop:regularity2} with $\alpha=\tfrac{1}{12}$ (see the proof of Lemma \ref{lem:lower below half-space}) and $\alpha=1$ (see the proof of Theorem \ref{thm:main2wsaw}). We will derive Proposition \ref{prop:regularity2} using classical random walk estimates that are derived in our companion paper \cite{DumPan24Perco}.
We start with a lemma which is useful to go around the parity assumption in Proposition~\ref{prop:regularity}.

\begin{Lem}\label{lem: remove parity}
Fix $d>4$, $\eta>0$, and $\ell\geq 1$. There exist $\lambda_0=\lambda_0(\ell,\eta,d)$ and $L=L(\ell,\eta,d)$ such that the following holds. For every $\lambda<\lambda_0$, every $\tfrac1{2d}\le \beta\le \beta_c$, every set $\Lambda\supset \Lambda_{L}$, and every $x\in \Lambda\setminus\Lambda_L$,
\begin{equation}
\max\big\{|G_\beta^\Lambda(u,x)-G_\beta^\Lambda(v,x)|:u,v\in\Lambda_\ell \big\}\le \eta \max\{G_\beta^\Lambda(w,x):w\in \Lambda_L\}.
\end{equation}
\end{Lem}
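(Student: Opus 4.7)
The argument is a short bootstrap built on Proposition \ref{prop:regularity}. The first observation is that the proof of Proposition \ref{prop:regularity} in fact applies verbatim to any pair $u,v$ with $u-v \in 2\mathbb{Z}^d$, not merely to $u,v \in 2\mathbb{Z}^d$: the reflection argument only requires the midpoint $(u+v)/2$ to lie in $\mathbb{Z}^d$, which is equivalent to $u+v \in 2\mathbb{Z}^d$, itself equivalent to $u-v \in 2\mathbb{Z}^d$. Consequently, for any $\eta'>0$, by choosing $L_0 = L_0(\ell, \eta', d)$ large enough (so that $\ell + 1 \le \lfloor \delta(\eta',d) L_0/3 \rfloor$), we obtain a \emph{same-parity} version: for every $\Lambda \supset \Lambda_{L_0}$, every $x \in \Lambda \setminus \Lambda_{L_0}$, and every $u, v \in \Lambda_{\ell + 1}$ with $u - v \in 2\mathbb{Z}^d$,
\[
|G_\beta^\Lambda(u, x) - G_\beta^\Lambda(v, x)| \le \eta' \max\{G_\beta^\Lambda(w, x):w\in \Lambda_{L_0}\}.
\]
This already settles the case $u - v \in 2\mathbb{Z}^d$ of the lemma.

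\textbf{Parity-swap step.} It remains to compare two points $u, v \in \Lambda_\ell$ of opposite parity. The idea is to relate $G_\beta^\Lambda(u,x)$ to an average over the nearest neighbours of $u$, all of which share the parity of $v$. Applying Lemma \ref{Lem: Simon-Lieb WSAW} with the singleton $S=\{u\}$, and using that $\delta_{u,x} = 0$ (as $u \in \Lambda_\ell \subset \Lambda_L$ while $x \notin \Lambda_L$), we get
\[
\Big| G_\beta^\Lambda(u,x) - \beta \sum_{y \sim u,\, y \in \Lambda} G_\beta^\Lambda(y,x) \Big| \le \lambda E_\beta^{\{u\}, \Lambda}(u) \, G_\beta^\Lambda(u,x),
\]
where the error amplitude $E_\beta^{\{u\},\Lambda}(u) = \beta \sum_{z \sim u,\, z \in \Lambda} G_\beta^\Lambda(z, u)$ is bounded by a constant $K_0 = K_0(C, d)$; indeed $G_\beta^\Lambda(z, u) \le G_\beta(0, z - u) \le 3C$ for nearest-neighbour pairs by Proposition \ref{prop: theorem beta start = beta_c}. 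Applying the same-parity estimate above to each pair $(y, v)$ with $y \sim u$ (noting $y, v \in \Lambda_{\ell+1}$ and $y - v \in 2\mathbb{Z}^d$), and invoking $1 \le 2d \beta \le 1 + K\lambda$ from Remark \ref{rem: beta_c} (so that $|1 - 2d\beta| \le K\lambda$), we conclude
\[
|G_\beta^\Lambda(u,x) - G_\beta^\Lambda(v,x)| \le \bigl(K\lambda + K_0 \lambda + 2d\beta_c \cdot \eta'\bigr) \max\{G_\beta^\Lambda(w, x):w\in\Lambda_L\}.
\]

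\textbf{Finishing.} It suffices to choose $\eta' := \eta/(4d)$ and then $\lambda_0$ small enough that $\lambda_0(K + K_0) \le \eta/2$, and finally $L := L_0(\ell, \eta', d) \vee (\ell+1)$. The main obstacle is structural rather than computational: Proposition \ref{prop:regularity} only couples points of the same parity via lattice reflection, so one must exchange parities by a separate microscopic mechanism. This mechanism is approximate discrete harmonicity of $G_\beta^\Lambda(\cdot, x)$ at the unit scale, which holds precisely because $\beta$ lies within $O(\lambda)$ of $(2d)^{-1}$ (Remark \ref{rem: beta_c}); combining this harmonicity with the same-parity regularity (to replace each $G_\beta^\Lambda(y, x)$ by $G_\beta^\Lambda(v, x)$) closes the loop.
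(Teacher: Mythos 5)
Your proof is correct, and it takes a genuinely different route from the paper. The paper proves this lemma by coupling two simple random walks started at $u$ and $v$ so that they merge before exiting $\Lambda_{L-1}$ (Proposition \ref{prop: estimates srw}), then iterating Lemma \ref{Lem: Simon-Lieb WSAW} with singleton blocks up to a truncated exit time $\tau\wedge T$ and controlling the multiplicative factor $\varphi^{\tau\wedge T}$ and the accumulated error term via the random-walk estimates imported from \cite[Appendix~A]{DumPan24Perco}. Your argument instead observes that the reflection mechanism behind Proposition \ref{prop:regularity} only requires $u-v\in 2\mathbb Z^d$ (so that the midpoint, after the single-coordinate reduction, is a lattice point), which is weaker than the stated $u,v\in 2\mathbb Z^d$ and settles the equal-parity case; it then bridges the remaining parity gap by a single application of Lemma \ref{Lem: Simon-Lieb WSAW} with $S=\{u\}$, which expresses $G_\beta^\Lambda(u,x)$ as $\beta$ times the sum of $G_\beta^\Lambda(y,x)$ over neighbours $y\sim u$ up to an $O(\lambda)$ multiplicative error, combined with $|2d\beta-1|\le K\lambda$ from Remark \ref{rem: beta_c}. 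This is an approximate discrete-harmonicity step at unit scale. Your approach is more elementary and self-contained — it avoids the coupling and the external results from \cite{DumPan24Perco} entirely, relying only on what the paper has already established (Proposition \ref{prop:regularity}, Proposition \ref{prop: theorem beta start = beta_c}, Remark \ref{rem: beta_c}). What the paper's route buys is uniformity with the rest of the machinery: Proposition \ref{prop:regularity2} and the estimates in \cite{DumPan24Perco} are built around the same iterated Simon--Lieb/random-walk representation, so the authors reuse a single toolbox. One small point of bookkeeping in your write-up: the threshold $L_0$ should be chosen as a function of $\ell+1$ (not $\ell$), since the same-parity estimate is applied to pairs $(y,v)$ with $y\in \Lambda_{\ell+1}$; but you in fact impose $\ell+1\le\lfloor\delta L_0/3\rfloor$, so this is only a notational slip.
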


\begin{proof} 

Fix $\eta>0$ and $\ell\geq 1$. If $z\in \mathbb Z^d$, we let $\mathbb P_z$ be the law of the simple random walk $(X_k)_{k\geq 0}$ started at $z$, i.e.
\begin{equation}\label{eq: step distribution srw}
\mathbb P_z[X_1=y]:=\tfrac1{2d}\mathds{1}_{y\sim z}, \qquad (y\in \mathbb Z^d).
\end{equation}
In the perspective of importing random walk estimates from \cite{DumPan24Perco}, we observe that--- following the notations of \cite[Appendix~A.2]{DumPan24Perco}--- the step distribution of \eqref{eq: step distribution srw} belongs to $\mathcal P_1$.

Let $L\geq \ell$ to be fixed. Fix $\Lambda\supset \Lambda_L$. Define $\tau=\tau_{L-1}$ to be the hitting time of $\partial\Lambda_{L-1}$.

We may couple the simple random walks started at $u$ and $v$ in such a way that they merge with high probability after $\approx \ell^2$ steps. If $L$ is large enough, this implies that the random walks merge before $\tau$. This classical estimate is presented in Proposition \ref{prop: estimates srw}. As a consequence of this result, there exists $L=L(\ell,\eta,d)>0$ such that, for every $u,v\in \Lambda_\ell$, and every $x\in \Lambda\setminus\Lambda_L$,
\begin{equation}\label{eq:new interm0}
	\Big|\mathbb E_u[G_\beta^{\Lambda}(X_\tau,x)]-\mathbb E_v[G_\beta^{\Lambda}(X_\tau,x)]\Big|\leq \frac{\eta}{4}\max\{G_\beta^{\Lambda}(w,x): w\in \Lambda_{L-1}\}.
\end{equation}
 Thanks to \cite[Corollary~A.6]{DumPan24Perco} (applied to $\eta/8$ and $n=A=L-2$ and $m=1$), we find $T,\varepsilon>0$ (which depend on $L,\eta,d$) satisfying $(1+\varepsilon)^T\leq 2$, for every $\varphi\in[1-\varepsilon,1+\varepsilon]$, and every $u\in \Lambda_\ell$,
 \begin{equation}
 	\Big|\mathbb E_u[\varphi^{\tau\wedge T}G_\beta^{\Lambda}(X_{\tau\wedge T},x)]-\mathbb E_u[G_\beta^{\Lambda}(X_\tau,x)]\Big|\leq \frac{\eta}{4}\max\{G_\beta^{\Lambda}(w,x): w\in \Lambda_{L}\}.
 \end{equation}
 Combining the previously displayed equation with \eqref{eq:new interm0} yields, for every $u,v\in \Lambda_\ell$,
\begin{equation}\label{eq:new interm1}
	\Big|\mathbb E_u[\varphi^{\tau\wedge T}G_\beta^{\Lambda}(X_{\tau\wedge T},x)]-\mathbb E_v[\varphi^{\tau\wedge T}G_\beta^{\Lambda}(X_{\tau\wedge T},x)]\Big|\leq \frac{\eta}{2}\max\{G_\beta^{\Lambda}(w,x): w\in \Lambda_{L}\}.
\end{equation}

From now on, we let $\lambda_0$ small enough be given by Proposition \ref{prop: theorem beta start = beta_c} and (to the cost of diminishing $\lambda_0$) we additionally require that $K\lambda_0\le \varepsilon$ where $K$ is given by the same proposition. Let $\lambda<\lambda_0$ and $\beta\le\beta_c$. The assumption $\beta\ge \tfrac1{2d}$ and Proposition~\ref{prop:  theorem beta start = beta_c} give
\begin{equation}
1\le 2d\beta=\varphi_\beta(\{0\})\le 1+K\lambda\le 1+\varepsilon.
\end{equation}
Introduce the shorthand notation $\varphi:=\varphi_\beta(\{0\})$. Fix $u,v\in \Lambda_\ell$ and $x\in \Lambda\setminus \Lambda_L$.
Iterating the two bounds of Lemma~\ref{Lem: Simon-Lieb WSAW} (with $S$ a singleton) until time $\tau\wedge T$ and using Lemma~\ref{lem:bound error} gives
\begin{align}
G_\beta^\Lambda(u,x)&\stackrel{\eqref{eq:SL}}\le  \mathbb E_{u}[\varphi^{\tau\wedge T} G_\beta^\Lambda(X_{\tau\wedge T},x)],\label{eq:hea1}\\
G_\beta^\Lambda(v,x)&\stackrel{\eqref{eq:reversed SL}}\ge \mathbb E_{v}[\varphi^{\tau\wedge T} G_\beta^\Lambda(X_{\tau\wedge T},x)]-K\lambda \,\mathbb E_v\Big[\sum_{s=0}^{\tau\wedge T}\varphi^s\Big]\max\{G_\beta^\Lambda(w,x):w\in \Lambda_{L}\}\\
&\stackrel{\phantom{\eqref{eq:reversed SL}}}\ge \mathbb E_v[\varphi^{\tau\wedge T} G_\beta^\Lambda(X_{\tau\wedge T},x)]-2K\lambda \mathbb E_v[\tau+1]\max\{G_\beta^\Lambda(w,x):w\in \Lambda_{L}\}\label{eq:hda1}.
\end{align}
Using \cite[Proposition~A.5]{DumPan24Perco} (with $n=L-2$ and $m=1$), we find that
\begin{equation}\label{eq:new interm2}
	\mathbb E_v[\tau+1]\leq 9dL^2+1.
\end{equation}
Taking the difference of \eqref{eq:hea1} and \eqref{eq:hda1} and using \eqref{eq:new interm2} gives
\begin{align}\notag
	G_\beta^{\Lambda}(u,x)-G_\beta^{\Lambda}(v,x)&\stackrel{\phantom{\eqref{eq:new interm1}}}\leq \mathbb E_{u}[\varphi^{\tau\wedge T} G_\beta^\Lambda(X_{\tau\wedge T},x)]-\mathbb E_{v}[\varphi^{\tau\wedge T} G_\beta^\Lambda(X_{\tau\wedge T},x)] \notag
	\\&\qquad +2K(9dL^2+1)\lambda \max\{G_\beta^\Lambda(w,x):w\in \Lambda_{L}\}
	\\&\stackrel{\eqref{eq:new interm1}}\leq \Big(\frac{\eta}{2} +2K(9dL^2+1)\lambda \Big)\max\{G_\beta^\Lambda(w,x):w\in \Lambda_{L}\}.
\end{align}
A similar bound holds with $u$ and $v$ replaced.
The proof follows by choosing $\lambda_0$ small enough so that $2K(9dL^2+1)\lambda_0\leq \eta/2$.
\end{proof}

\begin{Cor}[Regularity estimate at mesoscopic scales without the parity assumption]\label{cor: improved reg parity-wise}	Fix $d>4$ and $\eta>0$. There exist $\delta=\delta(\eta,d)\in(0,1/2)$ and $\lambda_0=\lambda_0(\eta,d)>0$, such that for every $\lambda<\lambda_0$, every $\tfrac{1}{2d}\le\beta\leq \beta_c$, every integer $n$ with $\lfloor n\delta\rfloor\geq 6$, every $\Lambda\supset \Lambda_{3n}$, and every $x\in \Lambda\setminus\Lambda_{3n}$, 
\begin{align}
\max\big\{|G_\beta^\Lambda(u,x)-G_\beta^\Lambda(v,x)|:u,v\in\Lambda_{\lfloor\delta n \rfloor}\big\}&\le \eta\max\big\{G_\beta^\Lambda(w,x):w\in \Lambda_{3n}\big\}.
\end{align}
\end{Cor}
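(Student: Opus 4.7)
The statement is exactly Proposition~\ref{prop:regularity} with the parity assumption removed, so the plan is to combine Proposition~\ref{prop:regularity} with Lemma~\ref{lem: remove parity} (applied at unit scale $\ell=1$) via a triangle inequality. Fix $C$ large enough so that Proposition~\ref{prop: theorem beta start = beta_c} gives $\beta^*(C,\lambda)=\beta_c$ for all sufficiently small $\lambda$; thanks to this identification, Proposition~\ref{prop:regularity} may be invoked at any $\beta\le \beta_c$ (rather than only $\beta<\beta^*$).

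Given $\eta>0$, I would apply Proposition~\ref{prop:regularity} with parameter $\eta/3$, producing $\delta_1=\delta_1(\eta,d)\in(0,1/2)$ and $\lambda_1=\lambda_1(\eta,d)>0$; and apply Lemma~\ref{lem: remove parity} with $\ell=1$ and parameter $\eta/3$, producing $L=L(\eta,d)\ge1$ and $\lambda_2=\lambda_2(\eta,d)>0$. Set $\lambda_0:=\min(\lambda_1,\lambda_2)$ and choose $\delta=\delta(\eta,d)\in(0,\delta_1/2)$ small enough that $\lfloor \delta n\rfloor\ge 6$ already forces both $\lfloor \delta n\rfloor+1\le \lfloor \delta_1 n\rfloor$ and $L+\lfloor \delta n\rfloor+1\le 3n$. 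For each $u\in \Lambda_{\lfloor\delta n\rfloor}$, define $u'\in 2\mathbb Z^d$ coordinate-wise by $u'_i:=2\lfloor u_i/2\rfloor$, so $|u-u'|_\infty\le 1$ and $u'\in \Lambda_{\lfloor\delta n\rfloor+1}\cap 2\mathbb Z^d\subset \Lambda_{\lfloor\delta_1 n\rfloor}\cap 2\mathbb Z^d$, and define $v'$ analogously.

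Then Proposition~\ref{prop:regularity} directly gives
\begin{equation*}
\big|G_\beta^\Lambda(u',x)-G_\beta^\Lambda(v',x)\big|\le \tfrac{\eta}{3}\max\{G_\beta^\Lambda(w,x):w\in\Lambda_{3n}\}.
\end{equation*}
For the remaining two comparisons, I would use translation invariance of $G_\beta^\Lambda$: since $G_\beta^\Lambda(u,x)=G_\beta^{\Lambda-u'}(u-u',x-u')$ and $u-u'\in \Lambda_1$, Lemma~\ref{lem: remove parity} applied on the translated domain $\Lambda-u'\supset \Lambda_L$ (which is ensured by the second scale condition, since $u'\in \Lambda_{\lfloor \delta n\rfloor+1}$) yields
\begin{equation*}
\big|G_\beta^\Lambda(u,x)-G_\beta^\Lambda(u',x)\big|\le \tfrac{\eta}{3}\max\{G_\beta^\Lambda(w,x):w\in\Lambda_L(u')\}\le \tfrac{\eta}{3}\max\{G_\beta^\Lambda(w,x):w\in\Lambda_{3n}\}.
\end{equation*}
The same bound holds with $u$ replaced by $v$. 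Summing the three inequalities by the triangle inequality gives the claim.

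The argument is essentially bookkeeping, and the only mildly delicate point is the coordination of the three scales $1$, $L$, and $\lfloor \delta n\rfloor$ so that rounding to $2\mathbb Z^d$ does not leave $\Lambda_{\lfloor \delta_1 n\rfloor}$ and so that the translated Lemma~\ref{lem: remove parity} lives inside $\Lambda_{3n}$; both constraints are enforced by a single smallness requirement on $\delta$, depending only on $\eta$ and $d$.
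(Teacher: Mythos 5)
Your proof is correct and takes essentially the same route as the paper: round $u,v$ to nearby even points, apply Proposition~\ref{prop:regularity} to those, compare $u$ with $u'$ and $v$ with $v'$ via the translated Lemma~\ref{lem: remove parity} at $\ell=1$, and combine by the triangle inequality, with a single smallness condition on $\delta$ coordinating the three scales. One small imprecision: Proposition~\ref{prop:regularity} is stated only for $\beta<\beta^*$, so even after identifying $\beta^*=\beta_c$ you still need a one-line continuity (or monotone-convergence) argument to reach the endpoint $\beta=\beta_c$, as the paper notes at the start of its proof.
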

\begin{proof} By continuity, it is sufficient to prove the result for $\beta<\beta_c$.
Take $\lambda_0$ small enough such that Proposition \ref{prop: theorem beta start = beta_c} holds, Proposition \ref{prop:regularity} holds with $\eta/3$, and Lemma \ref{lem: remove parity} holds with $\eta/3$ with $\ell=1$. Let also $\delta$ be given by Proposition \ref{prop:regularity}, and $L$ be given by Lemma \ref{lem: remove parity} for this choice of parameters. By Proposition \ref{prop:regularity}, if $n\geq 6\delta^{-1}$,
\begin{equation}
	\max\big\{|G_\beta^\Lambda(u,x)-G_\beta^\Lambda(v,x)|:u,v\in\Lambda_{\lfloor\delta n\rfloor}\cap 2\mathbb Z^d\big\}\le \frac{\eta}{3}\max\big\{G_\beta^\Lambda(w,x):w\in \Lambda_{3n}\big\}.
\end{equation}
Take $u,v\in \Lambda_{\lfloor \delta n\rfloor}$. There exist $u'$ and $v'$ such that $u',v'\in \Lambda_{\lfloor\delta n\rfloor}\cap 2\mathbb Z^d$, and $|u-u'|,|v-v'|\leq 1$. Assuming that $n\geq L$, Lemma \ref{lem: remove parity} gives
\begin{equation}
	|G_\beta^{\Lambda}(u,x)-G_\beta^{\Lambda}(u',x)|\leq \frac{\eta}{3}\max\{G_\beta^\Lambda(w,x):x\in \Lambda_L+u\}\leq \frac{\eta}{3}\max\{G_\beta^\Lambda(w,x):x\in \Lambda_{3n}\},
\end{equation}
and a similar bound holds for $|G_\beta^{\Lambda}(v,x)-G_\beta^{\Lambda}(v',x)|$. 
Combining the two previously displayed equations gives that for every $n\geq (6\delta^{-1}\vee L)$,
\begin{equation}
	|G_\beta^\Lambda(u,x)-G_\beta^\Lambda(v,x)|\leq \eta \max\{G_\beta^{\Lambda}(w,x):w\in \Lambda_{3n}\}.
\end{equation}
The proof follows readily from decreasing $\delta$ so that $6\delta^{-1}\geq L$.
	\end{proof}

%
We now turn to the proof of Proposition~\ref{prop:regularity2}. The proof follows a similar approach to that of Lemma \ref{lem: remove parity}. As $u$ and $v$ are at a distance approximately $n$ of each other, it is not possible to couple the random walks in such a way that they merge before exiting the box $\Lambda_{(1+\alpha)n}$. However, since we are looking for an up-to-constant comparison of the two-point functions, we can use a Harnack-type estimate to conclude. As we do not want this estimate to depend on $n$,  we replace the simple random walk by a well-chosen rescaled random walk defined as follows: if $m\geq 1$, consider the random walk $(X_k')_{k\geq 0}$ defined by the following step distribution $\mu=\mu_{m,\beta}$: for $u\in \Lambda_n$ and $v\in \mathbb Z^d$,
\begin{equation}\label{eq:step distribution non srw}
\mathbb P_u'[X_1'=v]=\mu(v-u):=\frac{\mathds{1}_{v\notin \Lambda_{m-1}(u)}}{\varphi_\beta(\Lambda_{m-1})}\sum_{\substack{w\in \Lambda_{m-1}(u)\\ w\sim v}}G_\beta^{\Lambda_{m-1}(u)}(u,w)\beta.
\end{equation}
We observe that--- following the notations of \cite[Appendix~A.2]{DumPan24Perco}--- the step distribution of \eqref{eq:step distribution non srw} belongs to $\mathcal P_m$. This allows us to import the following tool.

 \begin{Prop}[{\hspace{1pt}\cite[Proposition~A.7]{DumPan24Perco}}]\label{prop:uniform Harnack} Let $d>4$, $\alpha>0$, and $\eta>0$. There exists $C_{\rm RW}=C_{\rm RW}(\alpha,d)>0$ and $N_1=N_1(\eta,\alpha,d)>0$ such that the following holds. For every $n,m\geq 1$ satisfying $\tfrac{n}{m}\geq N_1$, every $f:\mathbb Z^d\rightarrow \mathbb R^+$, and every $u,v\in \Lambda_n$,
\begin{multline}
	\mathbb E_{u}'[f(X_\tau')]\leq C_{\rm RW}\mathbb E_{v}'[f(X_\tau')] +\eta\max\{f(w):w\in \Lambda_{(1+\alpha)n+2m}\}\\+2C_{\rm RW}\max\Big\{|f(w)-f(w')|: w,w'\in \Lambda_{3m(n/m)^{1/10}}(z), \: z\in \partial \Lambda_{(1+\alpha)n}\Big\},
\end{multline}
where $\tau:=\inf \{k\geq 0: X_k'\notin \Lambda_{(1+\alpha)n}\}$.
\end{Prop}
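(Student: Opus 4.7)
The plan is to apply Proposition~\ref{prop:uniform Harnack} to $f(w) := G_\beta^\Lambda(w, x)$, after using iterated Simon--Lieb to express $G_\beta^\Lambda(u,x)$ (for $u\in\Lambda_n$) as an expectation over a coarse-grained random walk. Fix $\alpha,\eta>0$, pick $N_1$ from Proposition~\ref{prop:uniform Harnack} (with tolerance $\eta/(8C_{\rm RW})$ and radius parameter $\alpha/2$), large enough that the mesoscopic scale $3m(n/m)^{1/10}$ (where $m := \lceil n/N_1 \rceil$) is much smaller than $\tfrac{\alpha}{6}n$, and consider the walk $X'$ with step distribution $\mu_{m,\beta}$ from \eqref{eq:step distribution non srw} and exit time $\tau := \inf\{k \geq 0 : X_k' \notin \Lambda_{(1+\alpha/2)n}\}$. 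The constraint $n \leq 6L_\beta(\varepsilon)$ with $\varepsilon<\varepsilon_0$ small, together with Proposition~\ref{prop: theorem beta start = beta_c}, ensures that $\mu_{m,\beta}\in\mathcal P_m$ with uniform constants, so the random-walk tools imported from \cite{DumPan24Perco} apply.

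Iterating \eqref{eq:SL} and \eqref{eq:reversed SL} with $S$ a translate of $\Lambda_{m-1}$ centered at the walker, and controlling the accumulated error via Lemma~\ref{lem:bound error}, yields
\begin{equation*}
\Big|G_\beta^\Lambda(u,x) - \mathbb E_u'\big[\varphi^\tau G_\beta^\Lambda(X_\tau', x)\big]\Big| \leq K\lambda\, \mathbb E_u'[\tau]\cdot M,
\end{equation*}
with $M := \max_{w\in\Lambda_{(1+\alpha)n}} G_\beta^\Lambda(w,x)$ and $\varphi := \varphi_\beta(\Lambda_{m-1}) \leq 1+K\lambda$ (Proposition~\ref{prop: theorem beta start = beta_c}). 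Random-walk moment bounds (\cite[Proposition~A.5]{DumPan24Perco}) give $\mathbb E_u'[\tau] \lesssim (n/m)^2 \lesssim N_1^2$, and the elementary inequality $|\varphi^\tau - 1| \leq K\lambda\, \tau (1+K\lambda)^\tau$ then lets one replace $\varphi^\tau$ by $1$ at the cost of $\tfrac{\eta}{8}M$, provided $\lambda$ is small enough (depending on $\eta,\alpha,d$). Applying Proposition~\ref{prop:uniform Harnack} to $f := G_\beta^\Lambda(\cdot, x)$ then yields, for every $u,v\in\Lambda_n$,
\begin{equation*}
\mathbb E_u'[f(X_\tau')] \leq C_{\rm RW}\, \mathbb E_v'[f(X_\tau')] + \tfrac{\eta}{8}\,M + 2C_{\rm RW}\,\mathrm{Osc}(f),
\end{equation*}
where $\mathrm{Osc}(f)$ denotes the maximal oscillation of $f$ over boxes $\Lambda_{3m(n/m)^{1/10}}(z)$ with $z \in \partial\Lambda_{(1+\alpha/2)n}$. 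Combining the upper bound at $u$ with the symmetric lower bound at $v$ then gives the claimed Harnack estimate, up to bounding $\mathrm{Osc}(f)$.

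The main obstacle is exactly this oscillation term, since the scale and position of the oscillation boxes are essentially forced by the statement of Proposition~\ref{prop:uniform Harnack} and one cannot simply iterate. The key is the buffer created by working with $\Lambda_{(1+\alpha/2)n}$ rather than $\Lambda_{(1+\alpha)n}$: because $z \in \partial\Lambda_{(1+\alpha/2)n}$ while $x \notin \Lambda_{(1+\alpha)n}$, one has $|z-x|_\infty \geq \tfrac{\alpha}{2}n$, which allows one to invoke Corollary~\ref{cor: improved reg parity-wise} at scale $n' := \lfloor \alpha n/6\rfloor$ after translating to $z$, provided the oscillation boxes of radius $3m(n/m)^{1/10} \lesssim n N_1^{-9/10}$ fit inside $\Lambda_{\lfloor\delta n'\rfloor}(z)$; this holds as soon as $N_1$ was taken large enough. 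Choosing $N_1$ even larger if necessary, so that Corollary~\ref{cor: improved reg parity-wise} can be applied with tolerance $\eta/(16 C_{\rm RW})$, gives $\mathrm{Osc}(f) \leq \tfrac{\eta}{16 C_{\rm RW}}\, M$. Taking $N_0$ large and $\lambda_0$ small enough to accommodate all scale and smallness requirements concludes the proof.
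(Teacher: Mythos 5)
Your proposal does not prove the stated result; it proves a different one. Proposition~\ref{prop:uniform Harnack} is a statement purely about the coarse-grained random walk $X'$ with step distribution $\mu_{m,\beta}\in\mathcal P_m$: it asserts a Harnack-type bound $\mathbb E_u'[f(X_\tau')]\le C_{\rm RW}\mathbb E_v'[f(X_\tau')]+\eta\max f+2C_{\rm RW}\mathrm{Osc}(f)$ for an \emph{arbitrary} nonnegative test function $f$. It involves no two-point function, no $\beta$, no $\lambda$, no Simon--Lieb inequality, and no error amplitude; in this paper it is simply imported from the companion paper \cite{DumPan24Perco} without proof.

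What you have written is a proof sketch of Proposition~\ref{prop:regularity2} (the Harnack estimate for $G_\beta^\Lambda$), and it is moreover circular as a proof of Proposition~\ref{prop:uniform Harnack}: you explicitly choose $N_1$ ``from Proposition~\ref{prop:uniform Harnack}'' and, in the middle of the argument, ``apply Proposition~\ref{prop:uniform Harnack} to $f:=G_\beta^\Lambda(\cdot,x)$'' --- that is, you invoke as a lemma the very statement you are supposed to establish. The conclusion you reach is a comparison between $G_\beta^\Lambda(u,x)$ and $G_\beta^\Lambda(v,x)$, not a comparison between $\mathbb E_u'[f(X_\tau')]$ and $\mathbb E_v'[f(X_\tau')]$ for general $f$.

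A genuine proof of Proposition~\ref{prop:uniform Harnack} would have to work entirely with the random walk $X'$: for instance, coupling or local-CLT arguments (uniform over step distributions in $\mathcal P_m$, in the spirit of \cite[Lemma~2.4.3]{LawlerLimicRandomWalks2010} and Proposition~\ref{prop: estimates srw}) to compare the exit measures of $X'$ started at $u$ and at $v$, with the $\eta$-term absorbing the probability of large excursions and the oscillation term absorbing the discretization near $\partial\Lambda_{(1+\alpha)n}$. The strategy you describe --- iterated Simon--Lieb, $\varphi_\beta$, error amplitudes, Corollary~\ref{cor: improved reg parity-wise} --- belongs entirely to the \emph{application} of this proposition, not to its proof.
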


\begin{proof}[Proof of Proposition~\textup{\ref{prop:regularity2}}]Let $\alpha,\eta>0$. Set $M:=\lfloor \tfrac{\alpha}{6} n\rfloor$, and let $C_{\rm RW}$ and $N_1$ be given by Proposition \ref{prop:uniform Harnack} applied to $\tfrac{\alpha}{6}$ and $\tfrac{\eta}{8}$.
Let $\delta$ and $\lambda_0$ be given by Corollary \ref{cor: improved reg parity-wise} with $\frac{\eta}{4C_{\rm RW}}$. To the cost of diminishing $\delta$, we additionally assume that
\begin{equation}\label{eq:take delta smaller}
	\delta\leq (N_1\alpha)^{-1}.
\end{equation}
Let $\tau$ be the hitting time of $\mathbb Z^d\setminus\Lambda_{n+M}$. If $m=\lfloor (\delta\alpha/36)^{10/9}n\rfloor$, then $n/m\geq N_1$ by \eqref{eq:take delta smaller}. Hence, Proposition \ref{prop:uniform Harnack} gives that for $u,v\in \Lambda_n$,
\begin{multline}\label{eq:proof propreg2 1}
	\mathbb E_u'[G_\beta^{\Lambda}(X_\tau,x)]\leq C_{\rm RW}\mathbb E'_v[G_\beta^{\Lambda}(X_\tau,x)]+\frac{\eta}{8}\max\{G_\beta^{\Lambda}(w,x):w\in \Lambda_{(1+\alpha)n}\}\\+2C_{\rm RW}\max \Big\{|G_\beta^\Lambda(w,x)-G_\beta^{\Lambda}(w',x)|: w,w'\in \Lambda_{\lfloor \delta M\rfloor}(z), \: z\in \partial \Lambda_{n+M}\Big\} 
\end{multline}
where we used that for this choice of $M,m$ and for $n$ large enough one has: 
\begin{align}
3m(n/m)^{1/10}&\leq \frac{\delta \alpha}{12}n\leq \frac{\delta\alpha}{6}n-\delta-1\leq \lfloor\delta M\rfloor,\\
n+M+2m&\leq (1+\alpha)n.\end{align} 
Combining \eqref{eq:proof propreg2 1} with \cite[Corollary~A.6]{DumPan24Perco} (applied to $\frac{\eta}{16C_{\rm RW}}$ and $A\approx (\alpha\delta)^{-10/9}$) provides $T=T(\eta/8,\alpha,d)>0$ and $\varepsilon=\varepsilon(T)$ such that $(1+\varepsilon)^T\leq 2$ and for every $\varphi\in [1-\varepsilon,1+\varepsilon]$,
\begin{multline}\label{eq:proof propreg2 1.5}
	\mathbb E_u'[\varphi^{\tau\wedge T}G_\beta^{\Lambda}(X_{\tau\wedge T},x)]\leq C_{\rm RW}\mathbb E'_v[\varphi^{\tau\wedge T}G_\beta^{\Lambda}(X_{\tau\wedge T},x)]+\frac{\eta}{4}\max\{G_\beta^{\Lambda}(w,x):w\in \Lambda_{(1+\alpha)n}\}\\+2C_{\rm RW}\max \Big\{|G_\beta^\Lambda(w,x)-G_\beta^{\Lambda}(w',x)|: w,w'\in \Lambda_{\lfloor \delta M\rfloor}(z), \: z\in \partial \Lambda_{n+M}\Big\}. 
\end{multline}
Additionally, thanks to \cite[Proposition~A.5]{DumPan24Perco}, for every $u\in \Lambda_n$,
\begin{equation}\label{eq:proof propreg2 2}
	\mathbb E_u'[\tau]\leq 9d\left(\frac{n+M}{n}\right)^2\leq C_1(1+\alpha)^2\delta^{-20/9},
\end{equation}
for some $C_1=C_1(d)>0$.
%
We are now in a position to prove the desired result.

To the cost of diminishing it, we assume Proposition \ref{prop: theorem beta start = beta_c} holds with $\lambda_0$, and we additionally require that $K\lambda_0\le \varepsilon$ where $K$ is given by the same proposition.
 Also, fix $u,v\in \Lambda_n$. By definition of $m$ and \eqref{eq:take delta smaller}, we observe that $6m<n$. Since we also have $n\le 6L_\beta(\varepsilon)$, Proposition~\ref{prop:  theorem beta start = beta_c} gives
\begin{equation}
1-\varepsilon\le \varphi_\beta(\Lambda_{m-1})\le 1+K\lambda\le 1+\varepsilon.
\end{equation}
We now set $\varphi:=\varphi_\beta(\Lambda_{m-1})$. Iterating the two bounds of Lemma~\ref{Lem: Simon-Lieb WSAW} with $S$ being translates of $\Lambda_{m-1}$, until time $\tau\wedge T$, and using Lemma~\ref{lem:bound error} gives
\begin{align}
G_\beta^\Lambda(u,x)&\stackrel{\eqref{eq:SL}}\le  \mathbb E_{u}'[\varphi^{\tau\wedge T} G_\beta^\Lambda(X_{\tau\wedge T}',x)],\label{eq:hea}\\
G_\beta^\Lambda(v,x)&\stackrel{\eqref{eq:reversed SL}}\ge \mathbb E_{v}'[\varphi^{\tau\wedge T} G_\beta^\Lambda(X_{\tau\wedge T}',x)]-K\lambda \,\mathbb E_v'\Big[\sum_{s=0}^{\tau\wedge T}\varphi^s\Big]\max\{G_\beta^\Lambda(w,x):w\in \Lambda_{n+M+m}\}\notag\\
&\stackrel{\phantom{\eqref{eq:reversed SL}}}\ge \mathbb E_v'[\varphi^{\tau\wedge T} G_\beta^\Lambda(X_{\tau\wedge T}',x)]-2K\lambda \mathbb E_v'[\tau+1]\max\{G_\beta^\Lambda(w,x):w\in \Lambda_{n+M+m}\}\label{eq:hda},
\end{align}
where in the last inequality we used that $(1+\varepsilon)^T\leq 2$.
Combining \eqref{eq:hea} and \eqref{eq:proof propreg2 1.5} gives
\begin{multline}\label{eq:proof propreg2 3}
	G_\beta^\Lambda(u,x)\leq C_{\rm RW}\mathbb E'_v[\varphi^{\tau\wedge T}G_\beta^{\Lambda}(X_{\tau\wedge T},x)]+\frac{\eta}{4}\max\{G_\beta^{\Lambda}(w,x):w\in \Lambda_{(1+\alpha)n}\}\\+2C_{\rm RW}\max \Big\{|G_\beta^\Lambda(w,x)-G_\beta^{\Lambda}(w',x)|: w,w'\in \Lambda_{\lfloor \delta M\rfloor}(z), \: z\in \partial \Lambda_{n+M}\Big\}. 
\end{multline}
Assume that $\lfloor\delta M\rfloor\geq 6$, which occurs as soon as $n\geq 6\alpha^{-1}\delta^{-1}(L+1)$. By Corollary \ref{cor: improved reg parity-wise} (recall that it is applied to $\tfrac{\eta}{4C_{\rm RW}}$),
\begin{multline}\label{eq:proof propreg2 4}
	2C_{\rm RW}\max \Big\{|G_\beta^\Lambda(w,x)-G_\beta^{\Lambda}(w',x)|: w,w'\in \Lambda_{\lfloor \delta M\rfloor}(z), \: z\in \partial \Lambda_{n+M}\Big\}\\\leq \frac{\eta}{2}\max\{G_\beta^{\Lambda}(w,x): w \in \Lambda_{n+3M}\}.
\end{multline}
Moreover, if we decrease $\lambda_0$ so that $2K\lambda_0 (C_1(1+\alpha)^2\delta^{-20/9}+1)\leq \tfrac{\eta}{4C_{\rm RW}}$, \eqref{eq:proof propreg2 2} implies 
\begin{equation}\label{eq:proof propreg2 5}
	2K\lambda \mathbb E_v'[\tau+1]\leq \frac{\eta}{4 C_{\rm RW}}.
\end{equation}
Plugging \eqref{eq:hda} in \eqref{eq:proof propreg2 3}, and using \eqref{eq:proof propreg2 4} and \eqref{eq:proof propreg2 5} gives
\begin{equation}
	G_\beta^{\Lambda}(u,x)\leq C_{\rm RW}G_\beta^{\Lambda}(v,x)+\eta\max\{G_\beta^\Lambda(w,x):w\in \Lambda_{(1+\alpha)n}\},
\end{equation}
where we used that $n+3M\leq (1+\alpha)n$. Since $u$ and $v$ are arbitrary, this concludes the proof.
 \end{proof}

%

\subsection{Conclusion}

We are now in a position to prove Theorem \ref{thm:main2wsaw}. We will first prove the bounds for $x$ such that $|x|\lesssim L_\beta(\varepsilon)$  for a well-chosen $\varepsilon$. The idea will be to use the Harnack-type estimate derived in Proposition \ref{prop:regularity2} to turn the averaged lower bound of \eqref{eq:averaged lower bound half-space} into a point-wise lower bound on the half-space two-point function. The full-space lower bound below distance $L_\beta(\varepsilon)$ will follow from the half-space estimate by an application of Lemma \ref{Lem: Simon-Lieb WSAW}. Finally, we will obtain the result for large values of $|x|$ by induction.

We start by lower bounding the half-space two-point function at scale below $6L_\beta(\varepsilon)$ (for some technical reasons we will need a multiple of $L_\beta(\varepsilon)$ later).

\begin{Lem}\label{lem:lower below half-space}
Fix $d>4$. There exist $c,\varepsilon_0,\lambda_0>0$ such that for every $\lambda<\lambda_0$, every $\varepsilon<\varepsilon_0$, every $\tfrac1{2d}\le \beta\le \beta_c$, and every $x\in \mathbb H$ with $x_1=|x|\le 6L_\beta(\varepsilon)$,
\begin{equation}\label{eq:op}
G_\beta^{\mathbb H}(0,x)\ge \frac{c}{(1\vee|x|)^{d-1}}.
\end{equation}
\end{Lem}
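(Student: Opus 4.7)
The plan is to deduce the pointwise lower bound from the averaged lower bound \eqref{eq:averaged lower bound half-space} by means of the Harnack-type estimate of Proposition~\ref{prop:regularity2}. Writing $n:=x_1=|x|$, I first handle the regime of bounded $n$ (say $n\le N_0$ for some constant $N_0=N_0(d)$): since $\beta\ge\tfrac{1}{2d}$ and $\rho(\gamma)=1$ for any self-avoiding $\gamma$, picking a single explicit self-avoiding path from $0$ to $x$ in $\mathbb H$ of length $\le dn$ gives $G_\beta^\mathbb H(0,x)\ge (2d)^{-dN_0}$, which is of the required order $c/(1\vee n)^{d-1}$ once $c$ is small enough.

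For $n\ge N_0$, I would fix $\alpha:=1/12$, set $z:=\lceil 9n/8\rceil\mathbf{e}_1$ and $m:=\lceil n/8\rceil$. A direct check shows that, for $N_0$ large enough, (i) $\Lambda_{(1+\alpha)n}(z)\subset\mathbb H$ and $0\notin\Lambda_{(1+\alpha)n}(z)$, so that Proposition~\ref{prop:regularity2} translated by $z$ applies with $\Lambda=\mathbb H$ and fixed endpoint $0$; (ii) both $x\in A_n$ and the face $A_m$ lie in $\Lambda_n(z)$; (iii) every $u\in\Lambda_{(1+\alpha)n}(z)$ satisfies $u_1\ge n/24$; and (iv) $m<L_\beta(\varepsilon)$ whenever $n\le 6L_\beta(\varepsilon)$, so \eqref{eq:averaged lower bound half-space} is applicable at scale $m$. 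Combining (ii) and (iv) and using $|A_m|\asymp m^{d-1}\asymp n^{d-1}$,
\begin{equation}
\max_{u\in\Lambda_n(z)}G_\beta^\mathbb H(u,0)\ge\frac{1}{|A_m|}\sum_{y\in A_m}G_\beta^\mathbb H(0,y)\ge\frac{c_1}{n^{d-1}},
\end{equation}
while (iii) and \eqref{eq:upper half-spacebound below L} (extended to $\beta\le\beta_c$ via Theorem~\ref{thm:mainwsaw}) yield
\begin{equation}
\max_{u\in\Lambda_{(1+\alpha)n}(z)}G_\beta^\mathbb H(u,0)\le\frac{C_2}{n^{d-1}}.
\end{equation}
Choosing $\eta:=c_1/(2C_2)$ in Proposition~\ref{prop:regularity2}, the error term absorbs at most half of the first maximum, so $\min_{u\in\Lambda_n(z)}G_\beta^\mathbb H(u,0)\ge c_1/(2C_{\rm RW}n^{d-1})$; reversibility of $G_\beta^\mathbb H$ and $x\in\Lambda_n(z)$ then give the claim. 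The parameters $\lambda_0,\varepsilon_0,N_0$ are fixed as those provided by Proposition~\ref{prop:regularity2} for this $\alpha$ and $\eta$.

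The main obstacle is the mismatch between the range $n\le 6L_\beta(\varepsilon)$ in the lemma and the range $m<L_\beta(\varepsilon)$ in which the averaged bound is effective. This forces one to apply Proposition~\ref{prop:regularity2} at the same scale $n$ as the target, and to shift the Harnack center $z$ off the origin so that $\Lambda_n(z)$ contains both $x$ and a smaller face $A_m$ with $m\asymp n/8<L_\beta(\varepsilon)$, while simultaneously $\Lambda_{(1+\alpha)n}(z)$ remains in $\mathbb H$ and at distance $\Theta(n)$ from $0$ (so that the upper bound of Theorem~\ref{thm:mainwsaw} controls the error term). The specific choice $\alpha=1/12$ and $z_1\approx 9n/8$ is what makes (i)--(iv) hold simultaneously throughout the whole range $n\le 6L_\beta(\varepsilon)$.
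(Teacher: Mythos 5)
Your proof is correct and follows essentially the same approach as the paper: apply the Harnack-type estimate of Proposition~\ref{prop:regularity2} with $\alpha=\tfrac1{12}$, re-centered at a point at distance of order $|x|$ from the origin along $\mathbf{e}_1$, so that the Harnack ball simultaneously contains $x$ and a face $A_m$ at a smaller scale $m<L_\beta(\varepsilon)$ where the averaged lower bound \eqref{eq:averaged lower bound half-space} is available, while the error term is absorbed using the half-space upper bound of Theorem~\ref{thm:mainwsaw}; small $|x|$ is handled by an elementary random walk comparison. The only differences from the paper are the explicit numerical choices (the paper centers at $\tfrac{7N}{6}\mathbf{e}_1$ with inner scale $\lfloor N/6\rfloor-1$, versus your $\tfrac{9n}{8}\mathbf{e}_1$ and $\lceil n/8\rceil$), which are immaterial.
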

\begin{proof} Let $\varepsilon_0\in(0,\tfrac12)$ to be fixed later. Recall from \eqref{eq: def An} that
\begin{equation}
A_n=\{x\in \mathbb Z^d:x_1=|x|=n\}.
\end{equation}
Thanks to \eqref{eq:averaged lower bound half-space}, there exists $c_0=c_0(d)>0$ such that for every $n< L_\beta(\varepsilon)$,
\begin{align}\label{eq:lower average}
\frac1{|A_n|}\sum_{y\in A_n}G_{\beta}^{\mathbb H}(0,y)&\geq\frac{c_0}{n^{d-1}}.
\end{align}

We now want to turn this averaged bound into a point-wise one. Let $\eta>0$ to be chosen sufficiently small. Let $\lambda_0,\varepsilon_0,N_0$ be given by Proposition \ref{prop:regularity2} applied to $\alpha=\tfrac{1}{12}$ and $\eta$. Let $\lambda<\lambda_0$ and $\varepsilon<\varepsilon_0$. Consider  $x\in A_N$ with $N\le6L_\beta(\varepsilon)$ and set $n:=\lfloor N/6\rfloor-1\le L_\beta(\varepsilon)-1$. Assume first that $N\geq N_0$, i.e.\ $n\geq 6N_0$.
Proposition~\ref{prop:regularity2} (as illustrated in Figure \ref{fig:howtofindalpha}) implies that for every $y\in A_n$, 
\begin{align}\label{eq:h1}
C_{\rm RW}G_{\beta}^{\mathbb H}(0,x)\ge G_{\beta}^{\mathbb H}(0,y)-\eta \max\{G_{\beta}^{\mathbb H}(0,z):z_1\ge n/2\}.
\end{align}
Averaging over $y\in A_n$, using the lower bound \eqref{eq:lower average} and the upper bound \eqref{eq:bound full plane} gives
\begin{align}\label{eq:h1}
C_{\rm RW}G_{\beta}^{\mathbb H}(0,x)\ge \frac{c_0}{n^{d-1}}-\eta \cdot \frac{C}{(n/2)^{d-1}}.
\end{align}
Choosing $\eta=\eta(c_0,C,d)$ such that $\eta C2^{d-1}\leq \tfrac{c_0}{2}$ (which affects how small $\lambda_0,\varepsilon_0$ have to be and how large $N_0$ is) implies the lower bound when $|x|\geq N_0$. The proof follows readily by choosing $c_1$ small enough such that
\begin{equation}
	\min\{G_{1/(2d)}^{\mathbb H}(0,x):x_1=|x|\leq N_0\}\geq \frac{c_1}{N_0^{d-1}},
\end{equation}
and setting $c:=c_0\wedge c_1$.
\end{proof}

\begin{figure}
	\begin{center}
		\includegraphics{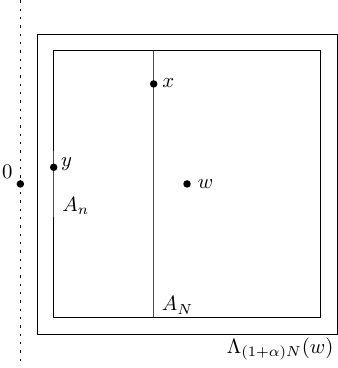}
		\put(-175,170){$\partial\mathbb H$}
		\caption{An illustration of the application of Proposition \ref{prop:regularity2} in the proof of Lemma \ref{lem:lower below half-space}. We wish to argue that $G_\beta^{\mathbb H}(0,y)$ and $G_\beta^{\mathbb H}(0,x)$ are of the same order. The red segments represent the sets $A_n$ and $A_N$. The boxes are centered at $w=\tfrac{7N}{6}\mathbf{e}_1$. The parameter $\alpha$ is chosen so that $\Lambda_{(1+\alpha)N}(w)\subset \{z\in \mathbb H: \: z_1\geq n/2\}$, i.e.\ $\alpha=\tfrac{1}{12}$.}
		\label{fig:howtofindalpha}
	\end{center}
\end{figure}

We now turn to the full-plane lower bound below scale $5L_\beta(\varepsilon)$.
\begin{Lem}\label{lem:lower below full-space}
Fix $d>4$. There exist $c,\varepsilon_0,\lambda_0>0$ such that for every $\lambda<\lambda_0$, every $\varepsilon<\varepsilon_0$, every $\tfrac1{2d}\le \beta\le \beta_c$, and every $|x|\le5L_\beta(\varepsilon)$,
\begin{equation}\label{eq:lb full below 5L}
G_\beta(0,x)\ge \frac{c}{(1\vee|x|)^{d-2}}.
\end{equation}
\end{Lem}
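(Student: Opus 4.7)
The plan is to establish the bound by a strong induction on $|x|$, combining the reversed Simon--Lieb inequality~\eqref{eq:reversed SL} with the half-space lower bound of Lemma~\ref{lem:lower below half-space}, while using the upper bound of Theorem~\ref{thm:mainwsaw} and the error-amplitude estimate of Proposition~\ref{prop: theorem beta start = beta_c} to absorb the error term. The base case $|x|\le N_0$ (for a fixed $N_0=N_0(d)$) follows from the trivial pointwise estimate $G_\beta(0,x)\ge \beta^{|x|}\ge (2d)^{-N_0}$, which yields~\eqref{eq:lb full below 5L} with a sufficiently small constant~$c$.

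For the inductive step, take $x$ with $N_0<|x|\le 5L_\beta(\varepsilon)$ and assume the bound holds for every $x'$ with $|x'|<|x|$. By the symmetries of $\mathbb Z^d$ we may reduce to $x_1=|x|$, and I would apply \eqref{eq:reversed SL} with $\Lambda=\mathbb Z^d$ and $S:=\{u\in\mathbb Z^d:u_1\le\lfloor|x|/2\rfloor\}$, a shifted half-space containing $0$ but not $x$. Since $G_\beta^S(0,x)=0$, the inequality reads
\begin{equation*}
G_\beta(0,x)\ge \sum_{\substack{y\in S,\,z\notin S\\ y\sim z}}G_\beta^S(0,y)\,\beta\, G_\beta(z,x)\;-\;\lambda\sum_{u\in S}E_\beta^{S,\mathbb Z^d}(u)\,G_\beta(u,x).
\end{equation*}
I would lower bound the main term by restricting to $y\in\partial S$ with $|y_j|\le\lfloor|x|/2\rfloor$ for $j\ge 2$. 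For such $y$, the reflection through $\{u_1=\lfloor|x|/2\rfloor\}$ composed with a translation in the last $d-1$ coordinates identifies $G_\beta^S(0,y)$ with $G_\beta^\mathbb H(0,w)$ for some $w$ with $w_1=|w|\le |x|/2<6L_\beta(\varepsilon)$, so Lemma~\ref{lem:lower below half-space} gives $G_\beta^S(0,y)\gtrsim |x|^{-(d-1)}$. Each corresponding $z=y+\mathbf{e}_1$ satisfies $|z-x|\le |x|/2$, and the inductive hypothesis gives $G_\beta(z,x)\gtrsim |x|^{-(d-2)}$. Since there are $\asymp |x|^{d-1}$ admissible $y$'s and $\beta\ge (2d)^{-1}$, summing produces a main term of order $|x|^{-(d-2)}$.

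The error term is handled as follows. After the reflection $u_1\mapsto -u_1$, $S$ becomes a shifted half-space of the form $\mathbb H_n$, so Proposition~\ref{prop: theorem beta start = beta_c} yields $E_\beta^{S,\mathbb Z^d}\le K$. Every $u\in S$ satisfies $|u-x|\ge |x|/2$, and Theorem~\ref{thm:mainwsaw} gives $\max_{u\in S}G_\beta(u,x)\le 3C\cdot 2^{d-2}|x|^{-(d-2)}$. Combining, the error is bounded by $3CK\lambda\cdot 2^{d-2}|x|^{-(d-2)}$. Choosing $\lambda_0$ small enough so that this does not exceed half the main term closes the inductive step with the same constant~$c$.

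The main obstacle is the bookkeeping of constants. The main term's constant is the product of the half-space lower bound constant from Lemma~\ref{lem:lower below half-space}, the dimensional gain $2^{d-2}$ arising from $|z-x|\le |x|/2$, and the counting factor associated with the $\asymp |x|^{d-1}$ admissible $y$'s (together with the factor $\beta\ge (2d)^{-1}$). This product must exceed the inductive constant $c$ with enough margin to absorb the $\lambda$-error; this forces $\lambda_0$ to be chosen sufficiently small depending on the half-space constant, $K$, $C$, and $d$. The matching of orders $|x|^{d-1}\cdot|x|^{-(d-1)}\cdot|x|^{-(d-2)}=|x|^{-(d-2)}$ is what produces the correct scaling and allows the induction to close in one shot.
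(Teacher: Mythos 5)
Your approach differs from the paper's: the paper telescopes the difference $G_\beta(0,x)-G_\beta^{\mathbb H}(0,x)$ over a whole family of half-spaces $\mathbb H_k$, $0\le k\le n\wedge(L_\beta(\varepsilon)-1)$, and then applies the half-space lower bound of Lemma~\ref{lem:lower below half-space} at \emph{both} ends of the Simon--Lieb decomposition (once through $\varphi_\beta(\Lambda_k)\ge 1-\varepsilon$, once for $G^{\mathbb H_{k+1}}_\beta(z,x)\gtrsim (n+k)^{-(d-1)}$), so that $\sum_{k\lesssim n}(n+k)^{-(d-1)}\gtrsim n^{-(d-2)}$ furnishes the correct power directly, with no induction. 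You instead split once at $m=\lfloor|x|/2\rfloor$, use Lemma~\ref{lem:lower below half-space} on the near side and the inductive hypothesis $G_\beta(z,x)\ge c/|z-x|^{d-2}$ on the far side. This plan does not close.

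There are two problems, the second of which is fatal. First, the claim $|z-x|\le|x|/2$ is false: with $x_1=|x|$, $z_1=m+1$, and $|z_j|\le m$ for $j\ge 2$, the transverse coordinates can make $|z-x|$ as large as roughly $\tfrac32|x|$ (take $x_j=|x|$, $y_j=-m$), and in particular $|z-x|$ can equal or exceed $|x|$, so the inductive hypothesis would not even be applicable for all admissible $y$; you would have to restrict $y$ further (losing a dimensional factor), and the advertised gain $2^{d-2}$ from $|z-x|\le|x|/2$ does not survive. Second, and more importantly, the inductive constant does not improve. Writing $c_{\mathrm{hs}}$ for the constant from Lemma~\ref{lem:lower below half-space}, $c_{\mathrm{dim}}$ for the counting factor, and keeping the factors $\beta\ge(2d)^{-1}$ and $m\le|x|/2$, the best the main term gives is
\begin{equation*}
\text{main term}\ \ge\ \frac{c_{\mathrm{dim}}\,c_{\mathrm{hs}}\,2^{d-1}}{2d}\cdot\frac{c}{|x|^{d-2}},
\end{equation*}
where $c$ is the inductive constant. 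Closing the induction therefore requires
\begin{equation*}
\frac{c_{\mathrm{dim}}\,c_{\mathrm{hs}}\,2^{d-1}}{2d}\,>\,1,
\end{equation*}
with margin left to absorb the $O(\lambda)/|x|^{d-2}$ error. But the restriction needed to enforce $|z-x|<|x|$ already eats the factor $2^{d-1}$ (in the worst case $c_{\mathrm{dim}}\approx 2^{-(d-1)}$), reducing the requirement to $c_{\mathrm{hs}}>2d$, which is hopeless: $c_{\mathrm{hs}}$ is a small constant (it is damped by the Harnack constant $C_{\rm RW}$ in the proof of Lemma~\ref{lem:lower below half-space} and is in any case bounded above by $1$). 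Taking $\lambda_0$ small cannot repair a deficit that is present already at $\lambda=0$. This is precisely why the paper sums over $\asymp n$ nested half-spaces: the power $n^{-(d-2)}$ is then produced by the length of the telescoping sum rather than by reusing a full-space inductive bound, and no constant has to ``reproduce itself'' under the induction.
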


\begin{proof}Let $\lambda_0, \varepsilon_0$ be given by Lemma \ref{lem:lower below half-space}. We will choose $\lambda_0$ even smaller below. Let $\lambda<\lambda_0$ and $\varepsilon<\varepsilon_0$. By symmetry, we may consider $x\in A_n$, where $n=|x|$. Lemma~\ref{Lem: Simon-Lieb WSAW} applied to $S=\mathbb H_k$ and $\Lambda=\mathbb H_{k+1}$ gives that
\begin{multline}\label{eq: lower bound typical full space}
G_\beta(0,x)=G_\beta^{\mathbb H}(0,x)+\sum_{k\geq 0}\Big(G_\beta^{\mathbb{H}_{k+1}}(0,x)-G_\beta^{\mathbb H_k}(0,x)\Big)
\\\ge \sum_{k\ge 0}\sum_{\substack{y\in \mathbb H_k,\\ z\notin \mathbb H_k,\:y\sim z}}G_\beta^{\mathbb H_k}(0,y)\beta G_\beta^{\mathbb H_{k+1}}(z,x)-\lambda \sum_{k\geq 0}\sum_{u\in \mathbb H_k}E_\beta^{\mathbb H_k,\mathbb H_{k+1}}(u)G_\beta^{\mathbb H_{k+1}}(u,x).
\end{multline}
Looking at the first sum, we see that
\begin{align*}
 	\sum_{k\geq 0}\sum_{\substack{y\in \mathbb H_k,\\ z\notin \mathbb H_k,\:y\sim z}}G_\beta^{\mathbb H_k}(0,y)\beta G_\beta^{\mathbb H_{k+1}}(z,x)&\geq \sum_{k=0}^{n\wedge (L_\beta(\varepsilon)-1)}\frac1{2d}\varphi_\beta(\Lambda_k)\min\{G_\beta^{\mathbb H_{k+1}}(z,x):-z_1=|z|=k+1\}	
 	\\&\geq \sum_{k=0}^{n\wedge (L_\beta(\varepsilon)-1)}\frac1{2d}(1-\varepsilon)\frac{c}{(n+k)^{d-1}}
 	\\&\geq \frac{c_1}{n^{d-2}},
\end{align*}
where $c_1=c_1(d)>0$, and where we restricted our attention to special positions of $y$ and used both the bound $\varphi_\beta(\Lambda_k)\ge 1-\varepsilon$ provided by $k\le n \wedge (L_\beta(\varepsilon)-1)$ and the lower bound  from Lemma~\ref{lem:lower below half-space} (note that $n+n \wedge L_\beta(\varepsilon)\le 6L_\beta(\varepsilon)$ for $n\le 5L_\beta(\varepsilon)$).

Turning to the ``error'' term in \eqref{eq: lower bound typical full space}, we take the position of $u$ into account and use the upper bound \eqref{eq:bound half plane} to get 
\begin{align}
		\sum_{k\geq 0}\sum_{p\geq 0}\sum_{u\in \partial \mathbb H_{k-p}}\sum_{\substack{y\in \mathbb H_k\\z\notin \mathbb H_k\\y\sim z}} &G_\beta^{\mathbb H_k}(0,u)G_\beta^{\mathbb H_k}(u,y)\beta G_\beta^{\mathbb H_{k+1}}(z,u)G_\beta^{\mathbb H_{k+1}}(u,x)\nonumber
		\\&\leq \sum_{k\geq 0}\sum_{p\geq 0}\frac{C}{(p+1)^{d-1}}\cdot \varphi_\beta(\mathbb H_p)\cdot \sum_{u\in \partial \mathbb H_{k-p}}G_\beta^{\mathbb H_k}(0,u)G_\beta^{\mathbb H_{k+1}}(u,x).\label{eq: proof full space lower 1}
		\end{align}
Using Proposition \ref{prop: theorem beta start = beta_c}, we have that $\varphi_\beta(\mathbb H_p)\leq 1+K\lambda$. We bound \eqref{eq: proof full space lower 1} differently according to the value of $p$. 

First, use \eqref{eq:bound full plane} to get
\begin{align}
	\sum_{p\geq (n+k)/2}\frac{C(1+K\lambda)}{(p+1)^{d-1}}\sum_{u\in \partial \mathbb H_{k-p}}G_\beta^{\mathbb H_k}(0,u)G_\beta^{\mathbb H_{k+1}}(u,x)&\leq \frac{C_1}{(n+k)^{d-1}}\sum_{u\in \mathbb Z^d}G_{\beta}(0,u)G_{\beta}(u,x)
	\\&\leq \frac{C_2}{(n+k)^{d-1}},\label{eq: proof full space lower 2}
\end{align} where $C_1,C_2>0$ only depend on $C$ and $d$.
%

Second, turning to the contribution coming from $p< \tfrac{n+k}{2}$, we write
\begin{equation}
	\max_{u\in \partial \mathbb H_{k-p}}G_\beta^{\mathbb H_{k+1}}(u,x)\stackrel{\eqref{eq: half plane at distance $k$ from half space}}\leq \frac{2^dC(p+2)}{(n+k)^{d-1}}, \qquad \sum_{u\in \partial \mathbb H_{k-p}}G_\beta^{\mathbb H_{k}}(0,u)\stackrel{\eqref{eq: sum at distance k from halfplane}}\leq \frac{4(p+1)}{\beta},
\end{equation}
where we used that for $u\in \partial \mathbb H_{k-p}$, one has $|u_1-x_1|=n+k-p\geq \tfrac{n+k}{2}$. Hence,
\begin{align}
	\sum_{p< (n+k)/2}\frac{C(1+K\lambda)}{(p+1)^{d-1}}\sum_{u\in \partial \mathbb H_{k-p}}G_\beta^{\mathbb H_k}(0,u)G_\beta^{\mathbb H_{k+1}}(u,x)&\leq C_3\sum_{p\leq\tfrac{n+k}{2}}\frac{1}{(p+1)^{d-3}}\frac{1}{(n+k)^{d-1}}
	\\&\leq \frac{C_4}{(n+k)^{d-1}},\label{eq: proof full space lower 3}
\end{align}
where $C_3,C_4>0$ only depend on $C$ and $d$. Combining \eqref{eq: proof full space lower 2} and \eqref{eq: proof full space lower 3} gives that
\begin{align}
	\sum_{k\geq 0}\sum_{p\geq 0}\sum_{u\in \partial \mathbb H_{k-p}}\sum_{\substack{y\in \mathbb H_k\\z\notin \mathbb H_k\\y\sim z}} G_\beta^{\mathbb H_k}(0,u)G_\beta^{\mathbb H_k}(u,y)\beta G_\beta^{\mathbb H_{k+1}}(z,u)G_\beta^{\mathbb H_{k+1}}(u,x)&\leq \sum_{k\geq 0}\frac{C_5}{(n+k)^{d-1}}\nonumber\\
	&\leq \frac{C_6}{n^{d-2}},
\end{align}
where $C_5,C_6$ only depend on $C$ and $d$. The proof follows by choosing $\lambda_0$ small enough so that $\lambda_0C_6<c_1/2$, and setting $c=c_1/2$.
%
%
\end{proof}
We are now in a position to prove Theorem~\ref{thm:main2wsaw}.
\begin{proof}[Proof of Theorem~\textup{\ref{thm:main2wsaw}}] Let $\lambda_0,\varepsilon_0>0$ be such that the previous two lemmata apply. We will (potentially) choose them even smaller below. Let $\lambda<\lambda_0$ and $\varepsilon<\varepsilon_0$.

Set $L'_\beta:=L_\beta(\varepsilon)$. If $\beta=\beta_c$ then $L_\beta=L_\beta'=\infty$ and the Lemmata \ref{lem:lower below half-space} and \ref{lem:lower below full-space} are sufficient to conclude. We therefore assume $\beta<\beta_c$. By Lemma \ref{lem: comparison of diff l} it suffices to prove the lower bounds with $L'_\beta$ instead of $L_\beta$ in the exponential.

We already have the corresponding lower bounds for $|x|\le 5L'_\beta$. Let us turn to the general case. We start with the full-space estimate and then explain how to adapt the argument to derive the half-space estimate.
\paragraph{The full-space case.} Let $c>0$ be given by Lemma \ref{lem:lower below full-space}. Introduce, for $k\ge0$, 
\begin{align}
m_k&:=\min\{G_\beta(0,x):x\in \Lambda_{(k+1)L_\beta'-1}\},
\end{align}
and 
\begin{equation}
S_k:=\{x\in\mathbb Z^d:kL'_\beta\le |x|<(k+1)L'_\beta\}.
\end{equation}
We prove by induction for $k\ge 4$ that for some $c_1>0$,
\begin{equation}
m_k\ge \frac{c}{(5L'_\beta)^{d-2}}c_1^{k-4}.
\end{equation}
For $k=4$, it is simply \eqref{eq:lb full below 5L}. We now assume that $k\ge 5$.

\begin{Claim}\label{claim:last proof} There exists $c_0=c_0(d)>0$ such that, for every $x\in S_k$,
\begin{equation}\label{eq:lb1}
	G_\beta(0,x)\geq 2c_0m_{k-1}-2K\lambda M_2(x)
\end{equation}
where
for $\ell\geq 1$,
\begin{align}
M_\ell(x)&:=\max\{G_\beta(0,y):y\in \Lambda_{\ell L'_\beta}(x)\}.\end{align}
\end{Claim}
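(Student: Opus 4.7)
The natural approach is to apply the reversed Simon-Lieb inequality~\eqref{eq:reversed SL} with $\Lambda = \mathbb{Z}^d$ and the block $S := \Lambda_{L'_\beta - 1}$. Since $k \ge 5$ gives $|x| \ge kL'_\beta > L'_\beta - 1$, the target $x$ lies outside $S$ and $G_\beta^S(0, x) = 0$, so
\begin{equation*}
G_\beta(0, x) \;\ge\; \sum_{\substack{y \in S,\, z \notin S \\ y \sim z}} G_\beta^S(0, y)\,\beta\, G_\beta(z, x) \;-\; \lambda \sum_{u \in S} E_\beta^{S, \mathbb{Z}^d}(u)\, G_\beta(u, x).
\end{equation*}
The plan is to bound the first sum below by $2c_0\, m_{k-1}$ for a dimension-dependent constant $c_0(d) > 0$, and the error term above by $2K\lambda M_2(x)$.

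The error term is straightforward: translation invariance gives $G_\beta(u, x) = G_\beta(0, x - u)$, and $|u| \le L'_\beta - 1$ places $x - u$ inside $\Lambda_{L'_\beta - 1}(x) \subset \Lambda_{2L'_\beta}(x)$, so $G_\beta(u, x) \le M_2(x)$. Since $S \in \mathcal{B}$, Proposition~\ref{prop: theorem beta start = beta_c} gives $\sum_u E_\beta^{S, \mathbb{Z}^d}(u) \le K$, and hence the error contribution is at most $K\lambda M_2(x) \le 2K\lambda M_2(x)$.

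For the main term, by the symmetries of $\mathbb{Z}^d$ I assume WLOG that $x_1 = |x|$. I restrict the sum to exit points $z \in \partial S$ on the face $F := \{z_1 = L'_\beta\}$, the one closest to $x$; the $2d$-fold symmetry of $(S, 0)$ makes this face carry exactly $\tfrac{1}{2d}\,\varphi_\beta(S) \ge \tfrac{1-\varepsilon}{2d}$ of the total weight $\varphi_\beta(S)$. To ensure $G_\beta(z, x) = G_\beta(0, x - z) \ge m_{k-1}$ I need $|x - z|_\infty \le kL'_\beta - 1$: for $j = 1$ this follows from $x_1 \le (k+1)L'_\beta - 1$; for $j \ge 2$ it amounts to $z_j \in [x_j - kL'_\beta + 1,\, x_j + kL'_\beta - 1] \cap [-(L'_\beta - 1),\, L'_\beta - 1]$, a subinterval that is full when $|x_j| \le (k-1)L'_\beta$ and has length $(k+1)L'_\beta - |x_j| - 1$ otherwise. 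I would then argue that the weighted sum restricted to the resulting admissible sub-face of $F$ retains a dimension-dependent fraction $2c_0(d)$ of $\varphi_\beta(S)$, which together with $G_\beta(z,x) \ge m_{k-1}$ on this sub-face yields the lower bound $2c_0 m_{k-1}$.

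The main obstacle is this last geometric point. For $x$ with several coordinates $|x_j|$ close to $|x|$ (in particular near-diagonal $x$ with $|x|$ close to $(k+1)L'_\beta - 1$), the admissible sub-face of $F$ shrinks and the naive sign-matching argument no longer yields a dimension-only fraction of $\varphi_\beta(S)$. The remedy is to combine the admissible restriction with a mesoscopic regularity estimate on $y \mapsto G_\beta^S(0, y)$ along $F$ (in the spirit of Corollary~\ref{cor: improved reg parity-wise}), so that discarding the inadmissible portion only costs a controlled fraction of the weight; alternatively one can translate $S$ by a vector in the direction of $x$ so that the face toward $x$ lies uniformly within distance $kL'_\beta - 1$ of $x$, and track the resulting asymmetry of $G_\beta^S(0, \cdot)$. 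Either way, ensuring that $c_0$ depends only on $d$ (not on $x\in S_k$ nor on $\varepsilon, \lambda$) is the delicate bookkeeping step.
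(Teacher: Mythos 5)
Your setup is correct as far as it goes: the reversed Simon--Lieb inequality \eqref{eq:reversed SL} with $S=\Lambda_{L'_\beta-1}$ and $\Lambda=\mathbb Z^d$ gives a main term plus an error term, the self term $G_\beta^S(0,x)$ vanishes because $|x|\ge kL'_\beta>L'_\beta-1$, and translation invariance together with the error-amplitude bound gives an error contribution $\le K\lambda M_1(x)\le 2K\lambda M_2(x)$. But you have identified, and not closed, a genuine gap in the main term: with $S$ fixed at the origin, the admissible part of the face toward $x$ (exit points $z$ with $|x-z|\le kL'_\beta-1$, which is what lets you insert $m_{k-1}$) shrinks and may even be empty for near-diagonal $x\in S_k$, and neither of your sketched remedies (a regularity estimate along the face; a translated box) is actually carried out. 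As written, the proposal is incomplete.

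The paper avoids the obstacle entirely by re-centering the box. Using $G_\beta(0,x)=G_\beta(x,0)$, it applies \eqref{eq:reversed SL} with $S=\Lambda_{L'_\beta-1}(x)$ centered at $x$, restricting the exit sum to a single octant of one coordinate face chosen so that every coordinate of $y-x$ is non-positive (i.e.\ the walk exits toward the origin, after assuming WLOG $x_i\ge0$). It then applies \eqref{eq:reversed SL} a second time with $S=\Lambda_{L'_\beta-1}(z)$ centered at the first exit point $z$, restricting to a second octant of a different face. Because each octant restriction costs exactly the fraction $\tfrac1{2d}\cdot\tfrac1{2^{d-1}}$ of $\varphi_\beta(\Lambda_{L'_\beta-1})\ge 1-\varepsilon$ by the symmetry of the box about its own center, the resulting constant $2c_0=\big(\tfrac1{2d}\tfrac1{2^{d-1}}(1-\varepsilon)\big)^2$ depends only on $d$ and carries no trace of the geometry of $x$; the two octant choices decrease coordinates $1$ and $2$ by at least $L'_\beta$ while never increasing the others, which is what is used to place the final point $v$ in $\Lambda_{kL'_\beta-1}$; and the two error terms give $K\lambda(M_1(x)+M_2(x))\le 2K\lambda M_2(x)$. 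The idea missing from your plan is precisely this iteration with a box that moves with the walk: re-centering is what makes the constant $x$-independent, whereas your single origin-centered box forces the constant to absorb the size of an admissible sub-face, which is exactly the bookkeeping you could not make uniform in $x$.
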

\begin{proof}[Proof of Claim \textup{\ref{claim:last proof}}]
Without loss of generality we can assume that $x_i\geq 0$ for all $1\leq i\leq d$. Let $\Lambda:=\Lambda_{L_\beta'-1}$ and 
\begin{equation}
H_1:=\partial \Lambda \cap \{u \in \mathbb Z^d: u_2=-(L_\beta'-1),\: u_1\leq 0, \:u_i\leq 0, \: \forall 3\leq i\leq d\}.\end{equation} Apply a first time \eqref{eq:reversed SL} and Lemma \ref{lem:bound error} to get
\begin{equation}
	G_\beta(0,x)\geq \sum_{\substack{y\in (\Lambda\cap H_1+x)\\z \notin (\Lambda+x)\\y\sim z}}G_\beta^{\Lambda+x}(x,y)\beta G_\beta(z,0)-K\lambda M_1(x).
\end{equation}
Then, letting 
\begin{equation}H_2:=\partial \Lambda \cap \{u\in \mathbb Z^d: u_1=-(L_\beta'-1), \: u_i\leq 0, \:\forall 2\leq i \leq d\},\end{equation} a new application of \eqref{eq:reversed SL} and Lemma \ref{lem:bound error} gives,
\begin{align}
	G_\beta(0,x)\geq \sum_{\substack{y\in (\Lambda\cap H_1+x)\\z \notin (\Lambda+x)\\z\sim y}}&G_\beta^{\Lambda+x}(x,y)\beta\sum_{\substack{u\in (\Lambda\cap H_2+z)\\v \notin (\Lambda+z)\\u\sim v}}G^{\Lambda+z}_\beta(z,u)\beta G_\beta(v,0)
	-K\lambda M_2(x)-K\lambda M_1(x).
\end{align}
Now, by construction $v$ as above must lie in $\Lambda_{kL_\beta'-1}$ (see Figure \ref{fig:doublereverseSL}) and therefore, by symmetry and by the definition of $L_\beta'$,
\begin{equation}
	G_\beta(0,x)\geq \Big(\frac{1}{2d}\frac{1}{2^{d-1}}(1-\varepsilon)\Big)^2m_{k-1}-2K\lambda M_2(x),
\end{equation}
and \eqref{eq:lb1} follows from setting $2c_0:=(\tfrac{1}{2d}\tfrac{1}{2^{d-1}}(1-\varepsilon))^2$.
\end{proof}

%
%
\begin{figure}[htb]
	\begin{center}
		\includegraphics{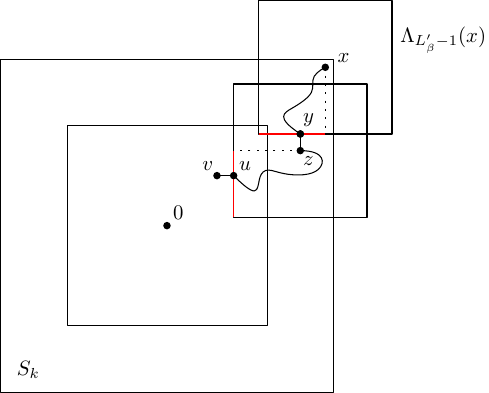}
		\caption{An illustration of the proof of \eqref{eq:lb1}. The situation depicted here is somehow the \emph{least} favorable. The sets $H_1$ and $H_2$ are the red bold lines. With the two successive applications of Lemma \ref{Lem: Simon-Lieb WSAW}, we ``drive'' the point $x$ towards the box $\Lambda_{kL_\beta'-1}$. This provides a recurrence relation between $m_k$ and $m_{k-1}$.}
		\label{fig:doublereverseSL}
	\end{center}
\end{figure}
If $x\in S_k$ is such that $2K\lambda M_2(x)\le G_\beta(0,x)$, then \eqref{eq:lb1} gives
\begin{equation}\label{eq:lb2}
G_\beta(0,x)\ge c_0 m_{k-1}.
\end{equation}
Moreover, when $x\in \Lambda_{kL_\beta'-1}$,
\begin{equation}
	G_\beta(0,x)\geq m_{k-1}\geq c_0m_{k-1}.
\end{equation}
As a consequence, we find $m_k\ge c_0m_{k-1}$ and therefore advance the induction hypothesis (with $c_1=c_0$), except if there is $x\in S_k$ such that $2K\lambda M_2(x)> G_\beta(0,x)$. We show below that this is in fact impossible by proceeding by contradiction. 

Let $\eta<\min\{1/(4C_{\rm RW}),c/(4C5^{d-2})\}$ and $\lambda_0<\eta/K$. Also, (potentially) decrease $\varepsilon_0$ so that Proposition~\ref{prop:regularity2} holds true for this $\eta$ and for $\alpha=1$. Let $N_0=N_0(\eta,1,d)$ be given by the same proposition. We may additionally assume that $2L'_\beta\geq N_0$ by choosing $\lambda_0$ even smaller. Indeed, observe that, following a similar argument as in \eqref{eq:phi beta rw =1}, one has that $L'_{(2d)^{-1},\lambda=0}=\infty$. Since, by Remark \ref{rem: beta_c}, one has $\tfrac{1}{2d}\leq \beta\leq \beta_c(\lambda)\leq \tfrac{1}{2d}+O(\lambda)$, it is possible (by continuity) to choose $\lambda$ small enough so that $2L'_\beta\geq N_0$. 

Let $\ell$ be such that $0\notin \Lambda_{(\ell+4)L'_\beta}(x)$. By Proposition \ref{prop:regularity2}, if $y\in \Lambda_{\ell L'_\beta}(x)$,
\begin{align}\notag
	\max\{&G_\beta(0,w): w\in \Lambda_{2L'_\beta}(y)\}\\&\leq C_{\rm RW}\min\{G_\beta(0,w):w\in \Lambda_{2L'\beta}(y)\cap \Lambda_{\ell L_\beta'}(x)\}+\eta \max \{G_\beta(0,w):w\in \Lambda_{4L_\beta'}(y)\} \notag\\&\leq C_{\rm RW}M_\ell(x)+\eta M_{\ell+4}(x).\label{eq:recursive equation M}
\end{align}
Optimising the above displayed equation over $y\in \Lambda_{\ell L'_\beta}(x)$ gives, for such $\ell$,
\begin{equation}\label{eq: renormalisation equation}
M_{\ell+2}(x)\le C_{\rm RW}M_\ell(x)+\eta M_{\ell+4}(x).
\end{equation}

\begin{Claim}\label{claim: recursion} Under our choices of $\eta$ and $\lambda_0$ and the assumption $M_0(x)=G_\beta(0,x)<2K\lambda M_2(x)$, one has, for every $k\geq 0$ such that $0\notin \Lambda_{(2k+2)L'_\beta}(x)$,
\begin{equation}\label{eq:recursive bound}
M_{2k-1}(x)\le M_{2k}(x)\le 2\eta M_{2k+2}(x).\end{equation}
\end{Claim}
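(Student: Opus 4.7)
The plan is to prove both inequalities of \eqref{eq:recursive bound} for all relevant $k$, the first being easy and the second by induction on $k$. We use the convention $M_0(x) := G_\beta(0,x)$ (and $M_{-1}(x) := 0$) so that $M_\ell(x)$ is defined and monotone nondecreasing in $\ell \ge -1$.

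The first inequality $M_{2k-1}(x) \le M_{2k}(x)$ is immediate from the inclusion $\Lambda_{(2k-1)L'_\beta}(x) \subset \Lambda_{2kL'_\beta}(x)$, so there is nothing to do there.

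The substance is the upper bound $M_{2k}(x) \le 2\eta M_{2k+2}(x)$, which I would prove by induction on $k$. For the base case $k=0$, the standing assumption of the claim gives
\begin{equation}
M_0(x) = G_\beta(0,x) < 2K\lambda M_2(x),
\end{equation}
and the hypothesis $\lambda_0 < \eta/K$ on the parameters yields $2K\lambda < 2\eta$, so $M_0(x) < 2\eta M_2(x)$. For the inductive step, suppose the bound holds at $k-1$, i.e., $M_{2k-2}(x) \le 2\eta M_{2k}(x)$. Apply the renormalisation inequality \eqref{eq: renormalisation equation} with $\ell = 2k-2$: this is permitted because the assumption $0 \notin \Lambda_{(2k+2)L'_\beta}(x)$ is exactly the condition $0 \notin \Lambda_{(\ell+4)L'_\beta}(x)$ required for \eqref{eq: renormalisation equation}. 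This gives
\begin{equation}
M_{2k}(x) \le C_{\rm RW} M_{2k-2}(x) + \eta M_{2k+2}(x) \le 2\eta C_{\rm RW} M_{2k}(x) + \eta M_{2k+2}(x).
\end{equation}
Since $\eta < 1/(4 C_{\rm RW})$, we have $2\eta C_{\rm RW} < 1/2$, so rearranging yields
\begin{equation}
M_{2k}(x) \le \frac{\eta}{1 - 2\eta C_{\rm RW}} M_{2k+2}(x) < 2\eta M_{2k+2}(x),
\end{equation}
completing the induction.

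There is no real obstacle; the argument is an algebraic consequence of the renormalisation inequality \eqref{eq: renormalisation equation} and the standing assumption. The only thing to watch is that the hypothesis $0 \notin \Lambda_{(2k+2)L'_\beta}(x)$ is precisely what allows \eqref{eq: renormalisation equation} to be invoked at the correct scale, and the two smallness conditions $\lambda_0 < \eta/K$ and $\eta < 1/(4C_{\rm RW})$ (both recorded just before the claim) control the base case and the inductive step, respectively.
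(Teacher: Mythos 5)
Your proof is correct and follows essentially the same route as the paper: an induction on $k$, with the base case $k=0$ settled by the standing hypothesis $G_\beta(0,x)<2K\lambda M_2(x)$ together with $\lambda_0<\eta/K$, and the inductive step obtained by plugging $M_{2k-2}(x)\le 2\eta M_{2k}(x)$ into the renormalisation inequality \eqref{eq: renormalisation equation} at $\ell=2k-2$ and absorbing the $2\eta C_{\rm RW}M_{2k}(x)$ term using $\eta\le 1/(4C_{\rm RW})$. The only cosmetic difference is that you made the convention $M_{-1}(x):=0$ explicit so that the first inequality of \eqref{eq:recursive bound} also makes sense at $k=0$; the paper leaves this implicit.
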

\begin{proof}[Proof of Claim \textup{\ref{claim: recursion}}] We proceed by induction. Since $\lambda<\eta/K$, the assumption gives $M_0(x)\leq 2\eta M_2(x)$ and the result for $k=0$. Assume that $k\geq 1$ is such that $0\notin \Lambda_{(2k+2)L'_\beta}(x)$ and that \eqref{eq:recursive bound} holds for $k-1$. Then, applying \eqref{eq: renormalisation equation} for $\ell=2k-2$ gives
\begin{equation}
	M_{2k}(x)\leq C_{\rm RW}M_{2k-2}(x)+\eta M_{2k+2}(x).
\end{equation}
By the induction hypothesis, $M_{2k-2}(x)\leq 2\eta M_{2k}(x)$. Plugging this inequality in the above equation gives
\begin{equation}
	M_{2k}(x)\leq \frac{\eta}{1-2\eta C_{\rm RW}}M_{2k+2}(x)\leq 2\eta M_{2k+2}(x),
\end{equation}
where we used the assumption that $\eta\leq 1/(4C_{\rm RW})$. Using the trivial bound $M_{2k-1}(x)\leq M_{2k}(x)$ concludes the proof.
\end{proof}
Applying the above claim to $2k=L:=2\lfloor \tfrac{1}{2}(|x|/L'_\beta-3)\rfloor$, we obtain that 
\begin{equation}
\frac{c}{(5L'_\beta)^{d-2}}\stackrel{\eqref{eq:lb full below 5L}}\le m_4\le M_{L}(x)\le 2\eta M_{L+2}(x)\stackrel{\eqref{eq:upper bound below L}}\le 2\eta \frac{2C}{(L'_\beta)^{d-2}}.\end{equation}
The choice of $\eta$  leads to a contradiction, therefore concluding the proof in the case of the full-space.

\paragraph{The half-space case.} The proof follows the exact same strategy as the full-space case. We make a small change and now choose $\eta<\min\{1/(4C_{\rm RW}),c/(4C6^{d-1})\}$ and $\lambda_0<\eta/K$. We let $c>0$ be given by Lemma \ref{lem:lower below half-space}. We define, for $k\geq 0$,
\begin{equation}
	\tilde m_k:=\min \{G_\beta^{\mathbb H}(0,x): x_1=|x|\leq (k+1)L_\beta'-1\},
\end{equation}
and
\begin{equation}
	\tilde{S}_k:=\{x\in \mathbb H: x_1=|x|, \: kL_\beta'\leq |x|<(k+1)L_\beta'\}.
\end{equation}
We prove by induction that there exists $c_2>0$ such that for every $k\geq 5$,
\begin{equation}
	\tilde{m}_k\geq \frac{c}{(6L_\beta')^{d-1}}c_2^{k-5}.
\end{equation}
For $k=5$, this follows from \eqref{eq:op}. We thus assume that $k\geq 6$.

\begin{Claim}\label{claim: first claim half case}
There exists $\tilde{c}_0=\tilde{c}_0(d)>0$ such that, for every $x\in \tilde{S}_k$,
\begin{equation}\label{eq:lb1 half}
 G_\beta^{\mathbb H}(0,x)\geq 2\tilde{c}_0\tilde{m}_{k-1}-2K\lambda \tilde{M}_2(x).
\end{equation}
where
for $\ell\geq 1$,
\begin{align}
\tilde{M}_\ell(x)&:=\max\{G_\beta^{\mathbb H}(0,y):y\in \Lambda_{\ell L'_\beta}(x)\}.\end{align}
\end{Claim}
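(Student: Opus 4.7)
The plan is to adapt the double application of the reversed Simon--Lieb inequality from Claim~\ref{claim:last proof} (the full-space case) to the half-space setting. By the reflection symmetries of $G_\beta^{\mathbb H}$ across the hyperplanes $\{x_i=0\}$ for $i\geq 2$, we may assume $x_i\geq 0$ for all $i\geq 2$, so that $x_1=|x|$. Since $k\geq 6$, we have $x_1\geq 6L_\beta'$, and hence $\Lambda+x\subset \mathbb H$ for $\Lambda:=\Lambda_{L_\beta'-1}$. Mirroring Claim~\ref{claim:last proof}, define the face-octants
\begin{equation*}
H_1:=\partial\Lambda\cap\{u: u_2=-(L_\beta'-1),\ u_1\leq 0,\ u_i\leq 0\ \forall i\geq 3\},
\end{equation*}
\begin{equation*}
H_2:=\partial\Lambda\cap\{u: u_1=-(L_\beta'-1),\ u_i\leq 0\ \forall i\geq 2\}.
\end{equation*}

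Apply \eqref{eq:reversed SL} to $G_\beta^{\mathbb H}(0,x)=G_\beta^{\mathbb H}(x,0)$ with inner set $S=\Lambda+x$ and outer set $\mathbb H$, retaining only the terms with $y\in(\Lambda\cap H_1)+x$ (so $z=y-\mathbf{e}_2$). The resulting error is bounded by $K\lambda\tilde M_1(x)$ using Lemma~\ref{lem:bound error} together with the monotonicity $E_\beta^{\Lambda,\mathbb H-x}\leq E_\beta^{\Lambda,\mathbb Z^d}\leq K$ (obtained after translating the starting point from $x$ to $0$ and using $G_\beta^{\mathbb H-x}\leq G_\beta$). Since each such $z$ satisfies $z_1\geq 5L_\beta'+1$, the box $\Lambda+z$ also lies in $\mathbb H$, and a second application of \eqref{eq:reversed SL} to each $G_\beta^{\mathbb H}(z,0)=G_\beta^{\mathbb H}(0,z)$ with $S=\Lambda+z$, restricted to $u\in(\Lambda\cap H_2)+z$ (so $v=u-\mathbf{e}_1$), adds an error $K\lambda\tilde M_1(z)\leq K\lambda\tilde M_2(x)$ weighted by the first partial sum. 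Since by symmetry of $\Lambda$ this first partial sum equals $\tfrac{1}{2d\cdot 2^{d-1}}\varphi_\beta(\Lambda_{L_\beta'-1})\leq 1$, the cumulative error is at most $2K\lambda\tilde M_2(x)$.

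For each quadruple $(y,z,u,v)$ in the restricted double sum, the construction yields $v_1=z_1-L_\beta'\in[x_1-2L_\beta'+1,\, x_1-L_\beta']$ and $v_j\leq z_j\leq x_j$ for $j\geq 2$. A case analysis based on $x_j\in[0,x_1]$, $x_1<(k+1)L_\beta'$, and $k\geq 6$ then yields $v_1=|v|\leq kL_\beta'-1$, so $G_\beta^{\mathbb H}(0,v)\geq\tilde m_{k-1}$. Combining this with $\varphi_\beta(\Lambda_{L_\beta'-1})\geq 1-\varepsilon$ and the symmetry of $\Lambda$, the restricted double sum is bounded below by $\bigl(\tfrac{1-\varepsilon}{2d\cdot 2^{d-1}}\bigr)^2\tilde m_{k-1}$, which establishes \eqref{eq:lb1 half} with $2\tilde c_0:=\bigl(\tfrac{1-\varepsilon}{2d\cdot 2^{d-1}}\bigr)^2$. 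The hard part is precisely this geometric step: unlike in the full-space case, $\tilde m_{k-1}$ imposes the extra \emph{forward-cone} constraint $v_1=|v|$, which is delicate to enforce for all $v$ in the double sum---especially when several of $x_2,\ldots,x_d$ are close to $x_1=|x|$. The choice of $H_1,H_2$ above may need to be adapted to the profile of $x$ (or a few more Simon--Lieb applications invoked) in the worst configurations.
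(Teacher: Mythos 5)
Your write-up faithfully expands the paper's own one-line proof of Claim~\ref{claim: first claim half case}, which simply defers to Claim~\ref{claim:last proof} with $\Lambda=\mathbb H$ substituted into the Simon--Lieb applications, and your choice $\tilde c_0=c_0$ matches the paper's.

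The concern you flag in the final paragraph is real, but note that it already lurks in Claim~\ref{claim:last proof} itself, not only in the half-space adaptation. After two applications of \eqref{eq:reversed SL} with the octants $H_1$ (perpendicular to $\mathbf{e}_2$) and $H_2$ (perpendicular to $\mathbf{e}_1$), the resulting point $v$ has $v_1,v_2\in[x_i-2L_\beta'+1,\,x_i-L_\beta']$ for $i\in\{1,2\}$, but for $j\geq 3$ the restriction to the octant only gives $v_j\in[x_j-2L_\beta'+2,\,x_j]$, so $v_j$ need not decrease at all. If $x_j\geq kL_\beta'$ for some $j\geq 3$ (which is compatible with $x\in S_k$, $x_1=|x|$, $x_i\geq 0$ when several coordinates of $x$ are comparable, a case certainly possible in $d\geq 3$), the asserted inclusion $v\in\Lambda_{kL_\beta'-1}$ in Claim~\ref{claim:last proof} can fail; in the half-space version the forward-cone constraint $v_1=|v|$ is an additional difficulty on top of this one (it can fail whenever $x_1-x_j<L_\beta'-1$ for some $j\geq 2$, even after both coordinates drop). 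The figure illustrating the full-space argument is two-dimensional, which is exactly the case where two applications do suffice. The repairs you gesture at are the natural ones: apply \eqref{eq:reversed SL} in the directions of all coordinates of $x$ that lie within $2L_\beta'$ of $|x|$ (up to $d$ applications), or shrink the octants so that all coordinates are forced to drop by a definite fraction of $L_\beta'$; either way the error grows to order $dK\lambda\tilde M_d(x)$ and the constant $\tilde c_0$ worsens, and the downstream recursion in Claim~\ref{claim: recursion half} must be run with a correspondingly larger step size, but the overall scheme survives. Since the paper itself does not make these adjustments explicit, your write-up is at the same level of rigour as the paper's, and your instinct to flag the geometric step is the correct editorial reaction. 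One minor point: the first partial sum restricted to $y\in(\Lambda\cap H_1)+x$ is bounded above by $\varphi_\beta(\Lambda_{L_\beta'-1})\leq 1+K\lambda$ rather than being equal to $\tfrac{1}{2d\cdot 2^{d-1}}\varphi_\beta(\Lambda_{L_\beta'-1})$; the symmetry argument only gives a lower bound of that form, but the upper bound $\leq 1+K\lambda$ is all that is needed for the error accounting.
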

\begin{proof}[Proof of Claim \textup{\ref{claim: first claim half case}}] The proof is exactly the same as the one of Claim~\ref{claim:last proof} except that in the applications of Lemma \ref{Lem: Simon-Lieb WSAW} we choose $\Lambda=\mathbb H$. In particular, we may set $\tilde{c}_0:=c_0$.
\end{proof}
If $x\in \tilde{S}_k$ is such that $2K\lambda \tilde{M}_2(x)\le G_\beta^{\mathbb H}(0,x)$, then \eqref{eq:lb1 half} gives
\begin{equation}\label{eq:lb2 half}
G_\beta^{\mathbb H}(0,x)\ge \tilde{c}_0 \tilde{m}_{k-1}.
\end{equation}
Moreover, when $x_1=|x|\leq kL_\beta'-1$,
\begin{equation}
	G_\beta^{\mathbb H}(0,x)\geq \tilde{m}_{k-1}\geq \tilde{c}_0\tilde{m}_{k-1}.
\end{equation}
We thus find that $\tilde{m}_k\ge \tilde{c}_0\tilde{m}_{k-1}$, and therefore advance the induction hypothesis (with $c_2=\tilde{c}_0$), except if there is $x\in \tilde{S}_k$ such that $2K\lambda \tilde{M}_2(x)> G_\beta^{\mathbb H}(0,x)$. Again, we show that this fact is impossible by contradiction.

Using the exact same strategy as in \eqref{eq: renormalisation equation}, we obtain that for $\ell$ such that $0\notin \Lambda_{(\ell+4)L_\beta'}(x)$, 
\begin{equation}
	\tilde{M}_{\ell+2}\leq C_{\rm RW}\tilde{M}_\ell(x)+\eta \tilde{M}_{\ell+4}(x),
\end{equation}
with $\tilde M_0(x)=G_\beta^{\mathbb H}(0,x)<2K\lambda \tilde{M}_2(x)$. The following claim is derived in the exact same way as Claim \ref{claim: recursion}.
\begin{Claim}\label{claim: recursion half} Under our choices of $\eta$ and $\lambda_0$ and the assumption $\tilde{M}_0(x)=G_\beta^{\mathbb H}(0,x)<2K\lambda \tilde{M}_2(x)$, one has, for every $k\geq 0$ such that $0\notin \Lambda_{(2k+2)L'_\beta}(x)$,
\begin{equation}\label{eq:recursive bound}
\tilde{M}_{2k-1}(x)\le \tilde{M}_{2k}(x)\le 2\eta \tilde{M}_{2k+2}(x).\end{equation}
\end{Claim}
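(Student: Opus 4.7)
The plan is to run the induction on $k$ verbatim from Claim \ref{claim: recursion}, with every occurrence of $M_\ell$ replaced by its half-space counterpart $\tilde M_\ell$. The crucial observation is that the renormalisation inequality
\[
\tilde M_{\ell+2}(x) \le C_{\rm RW}\tilde M_\ell(x)+\eta \tilde M_{\ell+4}(x),
\]
valid whenever $0\notin\Lambda_{(\ell+4)L'_\beta}(x)$, has already been established in the paragraph preceding the claim, by applying Proposition \ref{prop:regularity2} to $\Lambda=\mathbb H$ exactly as in the full-space argument leading to \eqref{eq: renormalisation equation}.

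For the base case $k=0$, I would simply combine the standing hypothesis $\tilde M_0(x)<2K\lambda \tilde M_2(x)$ with the choice $\lambda_0<\eta/K$ to obtain $\tilde M_0(x)\le 2\eta \tilde M_2(x)$. The trivial monotonicity bound $\tilde M_{-1}(x)\le \tilde M_0(x)$ (the maximum is over a larger box as $\ell$ grows) disposes of the other half of the inequality.

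For the inductive step from $k-1$ to $k$, the reasoning is purely algebraic: I would feed the induction hypothesis $\tilde M_{2k-2}(x)\le 2\eta \tilde M_{2k}(x)$ into the renormalisation inequality at $\ell=2k-2$ to get
\[
\tilde M_{2k}(x)\le 2\eta C_{\rm RW}\tilde M_{2k}(x)+\eta\tilde M_{2k+2}(x),
\]
then invoke $\eta\le 1/(4C_{\rm RW})$ to move the first term to the left-hand side, yielding $\tilde M_{2k}(x)\le \tfrac{\eta}{1-2\eta C_{\rm RW}}\tilde M_{2k+2}(x)\le 2\eta \tilde M_{2k+2}(x)$. The bound $\tilde M_{2k-1}(x)\le \tilde M_{2k}(x)$ is again automatic.

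I do not anticipate any genuine obstacle here: the entire proof is a line-by-line transcription of Claim \ref{claim: recursion}, since the only ingredients used are the renormalisation inequality (which Proposition \ref{prop:regularity2} provides uniformly in the choice of $\Lambda\supset \Lambda_{(1+\alpha)n}$, so the substitution $\mathbb Z^d\leadsto \mathbb H$ is transparent) together with the same numerical constraints $\eta\le 1/(4C_{\rm RW})$ and $\lambda<\eta/K$. No new estimate on the half-space two-point function is required.
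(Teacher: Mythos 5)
Your proposal is correct and follows exactly the route the paper takes: the paper explicitly says Claim~\ref{claim: recursion half} is ``derived in the exact same way as Claim~\ref{claim: recursion}'', and your induction — base case from $\lambda<\eta/K$, inductive step from the half-space renormalisation inequality at $\ell=2k-2$ together with $\eta\le 1/(4C_{\rm RW})$ — is a faithful transcription of that argument. Nothing to flag.
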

Applying the above claim to $2k=L:=2\lfloor \tfrac{1}{2}(|x|/L'_\beta-3)\rfloor$, we obtain that 
\begin{equation}
\frac{c}{(6L'_\beta)^{d-1}}\stackrel{\eqref{eq:op}}\le \tilde{m}_5\le \tilde{M}_{L}(x)\le 2\eta \tilde{M}_{L+2}(x)\stackrel{\eqref{eq:upper half-spacebound below L}}\le 2\eta \frac{2C}{(L'_\beta)^{d-1}}.\end{equation}
Again, the choice of $\eta$  leads to a contradiction. This concludes the proof in the case of the half-space.

\end{proof}

\appendix
\section{Appendix: proof of Lemma~\ref{Lem: Simon-Lieb WSAW}}\label{appendix:sl}

In this section, we prove Lemma \ref{Lem: Simon-Lieb WSAW}. Let $d\geq 2$ and $0<\beta<\beta_c$. Fix $S,\Lambda$ two subsets of $\mathbb Z^d$ satisfying $0\in S\subset \Lambda$. Finally, fix $x\in \Lambda$. Our goal is to show that
\begin{align}\label{eq:SLproof}
G_\beta^\Lambda(0,x)&\le G_\beta^S(0,x)+ \sum_{\substack{y\in S\\ z\in \Lambda\setminus S\\ y\sim z}}G_\beta^S(0,y)\beta G_\beta^\Lambda(z,x),\\
G_\beta^\Lambda(0,x)&\ge G_\beta^S(0,x)+\sum_{\substack{y\in S\\ z\in \Lambda\setminus S\\ y\sim z}}G_\beta^S(0,y)\beta G_\beta^\Lambda(z,x)- \lambda \sum_{u\in S}E_\beta^{S,\Lambda}(u)G_\beta^\Lambda(u,x),\label{eq:reversed SLproof}
\end{align}
where 
\begin{equation}
E_\beta^{S,\Lambda}(u)=\sum_{\substack{y \in S\\ z\in \Lambda\setminus S\\ y\sim z}}G_\beta^S(0,u)G_\beta^S(u,y) \beta G_\beta^\Lambda(z,u).
\end{equation}

Observe that
\begin{equation}\label{eq:proofSL1}
	G_\beta^{\Lambda}(0,x)-G_\beta^S(0,x)=\sum_{\substack{\gamma: 0 \rightarrow x\subset \Lambda\\ \gamma \cap S^c\neq \emptyset}}\beta^{|\gamma|}\rho(\gamma)=\sum_{\substack{y\in S\\ z\in \Lambda\setminus S\\ y\sim z}}\sum_{\substack{\gamma_1:0\rightarrow y\subset S\\ \gamma_2:z\rightarrow x\subset \Lambda}}\beta^{|\gamma_1|+|\gamma_2|+1}\rho(\gamma_1\circ(yz)\circ \gamma_2),
\end{equation}
where\footnote{The notation $\gamma_1:0\rightarrow y\subset S$ means that the walk $\gamma_1$ goes from 0 to $y$ by remaining in $S$, and similarly for comparable subscripts.}we used that any $\gamma$ contributing to the middle sum in \eqref{eq:proofSL1} can be written as the concatenation $\gamma_1\circ (yz)\circ \gamma_2$, with $(yz)$ the first edge of $\gamma$ exiting $S$. The structure of the weights implies that 
\begin{equation}\label{eq:a}
    \rho(\gamma_1)\rho(\gamma_2)- \lambda\rho(\gamma_1)\rho(\gamma_2) \sum_{\substack{0\leq i \leq |\gamma_1|\\1\leq j \leq |\gamma_2|}} \mathds{1}_{\gamma_1(i)=\gamma_2(j)} \le \rho(\gamma_1\circ(yz)\circ\gamma_2)\leq  \rho(\gamma_1)\rho(\gamma_2),
\end{equation}
where, in the first inequality, we used that 
\begin{equation}
   1-\lambda  \sum_{\substack{0\leq i \leq |\gamma_1|\\1\leq j \leq |\gamma_2|}} \mathds{1}_{\gamma_1(i)=\gamma_2(j)} \le  \prod_{\substack{0\leq i \leq |\gamma_1|\\1\leq j \leq |\gamma_2|}} (1-\lambda\mathds{1}_{\gamma_1(i)=\gamma_2(j)}).\end{equation}
On the one hand, we have that
\begin{equation}\label{eq:proofSL2}
	\sum_{\substack{y\in S\\ z\in \Lambda\setminus S\\ y\sim z}}\sum_{\substack{\gamma_1:0\rightarrow y\subset S\\ \gamma_2:z\rightarrow x\subset \Lambda}}\beta^{|\gamma_1|+|\gamma_2|+1}\rho(\gamma_1)\rho(\gamma_2)=\sum_{\substack{y\in S\\ z\in \Lambda\setminus S\\ y\sim z}}G_\beta^S(0,y)\beta G_\beta^\Lambda(z,x),
\end{equation}
which gives \eqref{eq:SLproof} when we plug the second inequality of \eqref{eq:a} in \eqref{eq:proofSL1}.

On the other hand, if we decompose according to $(|\gamma_1|,|\gamma_2|)=(n,m)$ and the common value $u$ of $\gamma_1(i)$ and $\gamma_2(j)$ in the sum on the left-hand side of \eqref{eq:a}, we obtain that
\begin{multline}\label{eq:proofSL3}
	\sum_{\substack{y\in S\\ z\in \Lambda\setminus S\\ y\sim z}}\sum_{\substack{\gamma_1:0\rightarrow y\subset S\\\gamma_2:z\rightarrow x\subset \Lambda}}\beta^{|\gamma_1|+|\gamma_2|+1}\lambda \sum_{\substack{0\leq i \leq |\gamma_1|\\1\leq j \leq |\gamma_2|}}\mathds{1}_{\gamma_1(i)=\gamma_2(j)}
	\\=\lambda \sum_{u\in S}\sum_{\substack{y\in S\\ z\in \Lambda \setminus S\\ y\sim z}}\sum_{n,m\geq 0}\beta^{n+m+1}\sum_{\substack{0\leq i \leq n\\1\leq j \leq m}}\sum_{\substack{\gamma_1:0\rightarrow y \subset S\\ \gamma_2: z \rightarrow x \subset \Lambda}}\rho(\gamma_1)\rho(\gamma_2)\mathds{1}_{\gamma_1(i)=u, \: |\gamma_1|=n}\mathds{1}_{\gamma_2(j)=u, \: |\gamma_2|=m}.
\end{multline}
We further split $\gamma_1$ into $\gamma_1':0 \rightarrow u$ and $\gamma_1'':u\rightarrow y$, and do the same for $\gamma_2$. Note that by definition, $|\gamma_1'|=i$ and $|\gamma_1''|=n-i$. Similarly, $|\gamma_2'|=j$ and $|\gamma_2''|=m-j$. The Cauchy product formula implies that
\begin{equation}
	\sum_{n\geq 0}\sum_{i=0}^n \beta^{n+1}\sum_{\substack{\gamma_1':0\rightarrow u\subset S\\ \gamma_1'':u \rightarrow y\subset S}}\rho(\gamma_1')\mathds{1}_{|\gamma_1'|=i}\rho(\gamma_1'')\mathds{1}_{|\gamma_1''|=n-i}=G^S_\beta(0,u)G^S_\beta(u,y)\beta.
\end{equation}
Similarly,
\begin{equation}
	\sum_{m\geq 0}\sum_{j=1}^m \beta^{m}\sum_{\substack{\gamma_2':z\rightarrow u\subset \Lambda\\ \gamma_2'':u \rightarrow x\subset \Lambda}}\rho(\gamma_2')\mathds{1}_{|\gamma_2'|=j}\rho(\gamma_2'')\mathds{1}_{|\gamma_2''|=m-j}\leq G^\Lambda_\beta(z,u)G^\Lambda_\beta(u,x).
\end{equation}
Using \eqref{eq:a} to obtain that $\rho(\gamma_i)\leq \rho(\gamma_i')\rho(\gamma_i'')$ (for $i=1,2$), and plugging the two previously displayed equations in \eqref{eq:proofSL3} gives
\begin{equation}\label{eq:proofSL4}
	\sum_{\substack{y\in S\\ z\in \Lambda\setminus S\\ y\sim z}}\sum_{\substack{\gamma_1:0\rightarrow y\subset S\\\gamma_2:z\rightarrow x\subset \Lambda}}\beta^{|\gamma_1|+|\gamma_2|+1}\lambda \sum_{\substack{0\leq i \leq |\gamma_1|\\1\leq j \leq |\gamma_2|}}\mathds{1}_{\gamma_1(i)=\gamma_2(j)}\leq \lambda \sum_{u\in S}E_\beta^{S,\Lambda}(u)G_\beta^\Lambda(u,x).
\end{equation}
Combining the first inequality of \eqref{eq:a}, \eqref{eq:proofSL2}, and \eqref{eq:proofSL4} in \eqref{eq:proofSL1} gives \eqref{eq:reversed SLproof}. This concludes the proof.

\section{Appendix: random walk estimates}\label{appendix:rw estimates}

Let $u \in \mathbb Z^d$. Let $\mathbb P_u$ be the law of the simple random walk started at $u$. Also, define $\tau^n:=\inf\{k\geq 1: X_k\notin \mathbb H_n\}$.
\begin{Prop}\label{prop: halfspace green function estimate} Let $d>2$. There exists $\bfC_{\rm RW}>0$ such that, for every $x\in \mathbb H$,
\begin{equation}\label{eq:rw estimate 1}
	\mathbb E_0\Big[\sum_{\ell <\tau^0}\mathds{1}_{X_\ell=x}\Big]\leq \frac{\bfC_{\rm RW}}{(1\vee|x_1|)^{d-1}}.
\end{equation}
\end{Prop}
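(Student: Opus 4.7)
The plan is to use the reflection principle to express the killed Green's function as a difference of two full-space Green's functions, and then exploit the cancellation between them.

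First, I would establish the reflection identity
$$\mathbb E_0\Big[\sum_{\ell < \tau^0} \mathds 1_{X_\ell = x}\Big] = G(0, x) - G(0, \bar x), \qquad \bar x := (-x_1 - 2,\, x_2, \ldots, x_d),$$
where $G(0, y) := \mathbb E_0\big[\sum_{\ell \ge 0} \mathds 1_{X_\ell = y}\big]$ is the full-space Green's function of SRW (finite since $d>2$). The natural involution $\sigma(y_1, \tilde y) = (-y_1, \tilde y)$ fixes $\{y_1 = 0\}$ pointwise, whereas our killing boundary is $\{y_1 = -1\}$, so I would first shift the setup by $+\mathbf e_1$ and consider the walk started at $\mathbf e_1$ and killed upon hitting $\{y_1 = 0\}$. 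The strong Markov property at the hitting time $\tau$ of $\{y_1 = 0\}$ gives, for $y$ with $y_1 \ge 1$,
$$G^{\{y_1\ge 1\}}(\mathbf e_1, y) = G(\mathbf e_1, y) - \mathbb E_{\mathbf e_1}[G(X_\tau, y)] = G(\mathbf e_1, y) - G(\mathbf e_1, \sigma(y)),$$
where the second equality uses $\sigma(X_\tau) = X_\tau$ to rewrite $G(X_\tau, y) = G(X_\tau, \sigma(y))$ and then recognises the hitting decomposition of $G(\mathbf e_1, \sigma(y))$ (since any path from $\mathbf e_1$ to $\sigma(y)$ must cross $\{y_1 = 0\}$). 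Translating back by $-\mathbf e_1$ yields the displayed formula.

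For $x_1 = 0$ the desired inequality reduces to $G^{\mathbb H}(0, x) \le \bfC_{\rm RW}$, which follows from $G^{\mathbb H}(0, x) \le G(x, x) = G(0,0) < \infty$ (the killed walk visits $x$ no more often than the full-space walk visits $0$). For $x_1 \ge 1$, I would invoke the standard asymptotics $G(0, y) = a_d |y|_2^{-(d-2)} + O(|y|_2^{-d})$ (see e.g.\ Lawler's monograph on random walks). Writing $r := |x|_2$ and $s := |\bar x|_2$, one has
$$s^2 - r^2 = (x_1 + 2)^2 - x_1^2 = 4x_1 + 4, \qquad r \ge x_1, \quad s \ge x_1 + 2,$$
so $s - r = (s^2-r^2)/(s+r) \le 4$. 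The mean value theorem then gives
$$\frac{a_d}{r^{d-2}} - \frac{a_d}{s^{d-2}} \le (d-2)\, a_d \, \frac{s - r}{r^{d-1}} \le \frac{C}{x_1^{d-1}},$$
and the $O(|y|_2^{-d})$ error terms contribute $O(x_1^{-d})$, easily absorbed into the same bound.

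The main obstacle is the reflection identity itself: because the killing boundary $\{y_1=-1\}$ contains no fixed points of the natural lattice reflection, one must shift before reflecting. Once this is in hand, the remaining work is a direct calculus manipulation combined with the classical full-space Green's function asymptotics.
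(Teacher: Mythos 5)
Your proof is correct and follows the standard reflection-principle argument: expressing the half-space Green's function as $G(0,x)-G(0,\bar x)$ and bounding the difference via the full-space asymptotics. The paper itself simply defers this to Lawler--Limic (Proposition~8.1.1, which is precisely the reflection identity for the half-space Green's function, and Lemma~6.3.3, which is a gradient estimate on $G$ playing the role of your mean-value-theorem step), so the route is essentially the same; you have just unpacked the cited lemmas into an explicit calculation.
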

\begin{proof} This classical estimate follows from a combination of \cite[Proposition~8.1.1]{LawlerLimicRandomWalks2010} and \cite[Lemma~6.3.3]{LawlerLimicRandomWalks2010}.
\end{proof}
If $L\geq 1$, we let $\tau_L:=\inf\{k\geq 1:X_k\notin \Lambda_{L-1}\}$.
\begin{Prop}\label{prop: estimates srw} Let $d\geq 1$ and $\ell\geq 1$. Then, for every $\eta>0$, there exists $L_0=L_0(\ell,\eta,d)\geq \ell$ such that, for every $L\geq L_0$, for every $f:\Lambda_L\rightarrow \mathbb R^+$, and every $u,v \in \Lambda_\ell$, 
		\begin{equation}
		\Big|\mathbb E_u[f(X_{\tau_L})]-\mathbb E_v[f(X_{\tau_L})]\Big|\leq \eta \max\{f(w):w\in \Lambda_L\}.
	\end{equation}
\end{Prop}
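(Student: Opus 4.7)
My plan is to prove the proposition via a coupling argument. The goal is to construct a joint process $(X^u_k, X^v_k)_{k\geq 0}$ whose marginals under $\mathbb P$ are simple random walks started at $u$ and $v$, and such that the two first-exit points from $\Lambda_{L-1}$ coincide with probability at least $1-\eta/2$ once $L$ is large enough. Once such a coupling is built, the proposition follows from the elementary total-variation bound
\begin{equation*}
\big|\mathbb E_u[f(X_{\tau_L})]-\mathbb E_v[f(X_{\tau_L})]\big|\leq 2\max_{w\in\Lambda_L}f(w)\cdot \mathbb P\big(X^u_{\tau^u_L}\neq X^v_{\tau^v_L}\big),
\end{equation*}
where $\tau^u_L,\tau^v_L$ denote the exit times of the two coupled walks.

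The coupling I have in mind is the standard coordinate-wise reflection coupling. At each step one draws independent uniform random variables $J_k\in\{1,\ldots,d\}$ and $\varepsilon_k\in\{\pm1\}$, lets $X^u$ take the step $\varepsilon_k\mathbf{e}_{J_k}$, and lets $X^v$ take $\varepsilon_k\mathbf{e}_{J_k}$ if $X^{u,J_k}_k=X^{v,J_k}_k$ and $-\varepsilon_k\mathbf{e}_{J_k}$ otherwise. Under this coupling, the coordinate-wise differences $D^j_k:=X^{v,j}_k-X^{u,j}_k$ are independent lazy random walks on $\mathbb Z$ absorbed at $0$, and a one-dimensional Gambler's ruin estimate yields $\mathbb P(T_c>T)\leq C(d)\ell^2/T$ for the coalescence time $T_c:=\inf\{k\geq 0:X^u_k=X^v_k\}$. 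A standard $L^2$-type maximal inequality for SRW gives $\mathbb P_u(\tau^u_L\leq T)\leq CT/L^2$. I would then choose $T=T(\ell,\eta,d)$ so that $\mathbb P(T_c>T)\leq\eta/4$ and $L_0=L_0(T,\eta,d)$ so that $\mathbb P_u(\tau^u_L\leq T)\leq\eta/4$; on the complement of these two bad events, the two walks meet before exiting and can be extended by running them identically for $k\geq T_c$, so that $X^u_{\tau^u_L}=X^v_{\tau^v_L}$.

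The main obstacle is a parity issue: the reflection coupling preserves each $D^j_k\pmod 2$, so coalescence under this coupling is impossible when some $v_j-u_j$ is odd. I would handle this by preceding the reflection coupling by a short parity-correction phase, in which a few independent steps of one walk align the coordinate parities before the mirror coupling kicks in. Each such correction alters the exit distribution on $\partial\Lambda_{L-1}$ by $O(1/L)$ (by a one-dimensional Gambler's ruin / harmonic-measure computation applied to the affected coordinate), and at most $O(\ell)$ corrections are ever needed, so the extra total-variation cost is $O(\ell/L)$ and is absorbed into the choice of $L_0$. Everything else --- the Gambler's ruin bound on $T_c$, the maximal inequality bound on $\tau^u_L$, and the extension of the coupling after coalescence --- is classical and can be imported directly from \cite{LawlerLimicRandomWalks2010}.
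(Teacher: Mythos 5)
Your overall strategy is the same as the paper's: build a coupling of the two walks so that their first-exit points from $\Lambda_{L-1}$ coincide with probability $1-O(\eta)$, and then conclude by a total-variation bound. However, the paper simply imports the coupling wholesale, citing \cite[Lemma~2.4.3]{LawlerLimicRandomWalks2010}, which directly supplies $\mathbf P[X^u_k=X^v_k\text{ for all }k\geq n]\geq 1-C_0\ell/\sqrt{n}$ (and in particular absorbs the parity question), and then uses the \emph{deterministic} observation that $\tau_L\geq L-\ell$ almost surely (the walk moves one unit per step and starts in $\Lambda_\ell$), so coalescence before time $L-\ell$ forces the exit points to agree. You instead rebuild the coupling by hand via coordinate-wise reflection, and replace the deterministic exit-time bound by a probabilistic maximal inequality $\mathbb P_u(\tau^u_L\leq T)\leq CT/L^2$. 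Both substitutions work in principle; the paper's choices are simply cleaner.

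The one place where your write-up is genuinely shaky is the parity-correction phase. The claim that "each such correction alters the exit distribution on $\partial\Lambda_{L-1}$ by $O(1/L)$" is both unnecessary and not obviously true in the form stated: if "a correction" means forcing one walk to take an extra step, that walk is no longer a simple random walk started at $v$, and a total-variation comparison of harmonic measures on $\partial\Lambda_{L-1}$ from two nearby centres (rather than a pointwise ratio comparison) requires a real argument, not just a one-dimensional Gambler's ruin computation. The standard fix, which avoids all of this, is to run the two walks \emph{independently} until all coordinate differences are even, and only then switch to the reflection coupling. This keeps both marginals exact SRWs, the alignment time is stochastically bounded in terms of $\ell$ and $d$ only, and once aligned you never leave the reflection coupling. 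With that adjustment your proof is correct; as written, the $O(1/L)$ step is the one gap a careful reader would push back on. Alternatively, citing the Lawler--Limic coupling lemma as the paper does dispenses with the reflection coupling and the parity issue entirely.
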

\begin{proof}

Using \cite[Lemma~2.4.3]{LawlerLimicRandomWalks2010}, we find a coupling $(X^u,X^v)\sim\mathbf P$ and $C_1=C_1(d)>0$ such that the following holds: $X^u$ has law $\mathbb P_u$, $X^v$ has law $\mathbb P_v$, and for every $n\geq 1$,
\begin{equation}\label{eq:estimate srw 3}
	\mathbf P[X^u_k=X^v_k \: \text{for every }k\geq n]\geq 1-\frac{C_0\ell}{\sqrt{n}}.
\end{equation}
We denote by $\tau^u_L$ and $\tau^v_L$ the corresponding stopping times for the exit of $\Lambda_{L-1}$.
Now, observe that $\mathbf P$-almost surely, one has $\tau_L^{z}\geq L-\ell$ for $z\in \{u,v\}$. Hence, if $X^u_{L-\ell}=X^v_{L-\ell}$, then $X^u_{\tau_L^u}=X^v_{\tau_L^v}$. Combining this observation and \eqref{eq:estimate srw 3} yields
\begin{align}\notag
	\Big|\mathbb E_u[f(X_{\tau_L})]-\mathbb E_v[f(X_{\tau_L})]\Big|&=\Big|\mathbf E[f(X^u_{\tau_L^u})-f(X^v_{\tau_L^v})]\Big|
	\\&\leq 2\mathbf P[\{X_{k}^u=X^v_{k}\: \text{for every }k\geq L-\ell\}^c]\max\{f(w):w\in \Lambda_L\}\notag
	\\&\leq \frac{2C_0\ell}{\sqrt{L-\ell}}\max\{f(w):w\in \Lambda_L\}.
\end{align}
The proof follows from choosing $L_0$ large enough so that $\frac{2C_0\ell}{\sqrt{L_0-\ell}}\leq \eta$.
\end{proof}

\bibliographystyle{alpha}
\bibliography{biblio.bib}

\end{document}